\documentclass[11pt]{elsarticle}

\usepackage{lineno,hyperref}
\modulolinenumbers[5]
\journal{Studies in Applied Mathematics}

\usepackage{amsmath,accents,amssymb,amsthm,amsfonts,afterpage,float,bm,stmaryrd,paralist,wrapfig,bbm,soul}
\usepackage{mathbbol}

\usepackage{cancel}
\usepackage{color}
\usepackage{fancyhdr}
\usepackage{graphics}
\usepackage{hyperref}
\usepackage{graphicx,epsf}
\usepackage{makeidx}
\usepackage{epsfig}
\usepackage{lscape}
\usepackage{subfigure}
\usepackage{listings}
\usepackage{multirow}

\usepackage{mathtools}
\usepackage{amsfonts}
\usepackage{xcolor}
\usepackage{hyperref}
\usepackage{graphicx}
\graphicspath{{Figure/}{Figures/}}
\usepackage{amsopn}
\DeclareMathOperator{\diag}{diag}

\ifpdf
  \DeclareGraphicsExtensions{.eps,.pdf,.png,.jpg}
\else
  \DeclareGraphicsExtensions{.eps}
\fi

\usepackage{empheq,xcolor}

\pssilent

\newtheorem{theorem}{Theorem}[section]
\newtheorem{lemma}{Lemma}[section]
\newtheorem{assumption}{Assumption}[section]

\newtheorem{remark}{Remark}[section]

\numberwithin{equation}{section}
\numberwithin{figure}{section}
\numberwithin{table}{section}

\newcommand{\bbT}{\mathbb T}

\newcommand{\calM}{\mathcal M}
\newcommand{\calR}{\mathcal R}
\newcommand{\bz}{\boldsymbol z}
\newcommand{\bv}{\boldsymbol v}
\newcommand{\bmu}{\boldsymbol\mu}
\newcommand{\ve}{\varepsilon}
\newcommand{\ud}{\mathrm d}

\newcommand{\kb}{\kappa_{{\rm bend}}}
\renewcommand{\div}{\operatorname{div}}
\newcommand{\norm}[1]{\left\lVert #1\right\rVert}
\newcommand{\abs}[1]{\left\lvert #1\right\rvert}

\newcommand{\calL}{\mathcal L}

\newcommand{\calH}{\mathcal H}
\newcommand{\one}{\boldsymbol 1}
\newcommand{\bPhi}{\boldsymbol\Phi}
\newcommand{\bU}{\mathbf U}

\newcommand{\bPsi}{\boldsymbol\Psi}

\newcommand{\bB}{\boldsymbol B}

\newcommand{\odotv}{\mathbin{\odot}}

\newcommand{\iph}[2]{\left\langle #1,#2\right\rangle_h}

\newcommand{\ThreeDPanel}[1]{%
  \makebox[0.31\linewidth][c]{%
    \includegraphics[
      height=0.255\linewidth,
      keepaspectratio
    ]{#1}%
  }%
}

\newlength{\ThreeDImageHeight}
\begin{document}

\setlength{\pdfpageheight}{\paperheight}
\setlength{\pdfpagewidth}{\paperwidth}
\title{A Thermodynamically Consistent Model for Multicomponent Vesicles}

\author{Wangbo Luo\fnref{cor1}}
\address{Department of Applied Mathematics, The Hong Kong Polytechnic University, Hung Hom, Kowloon, Hong Kong}

\author{Zhonghua Qiao\fnref{cor2}}
\address{Department of Applied Mathematics, The Hong Kong Polytechnic University, Hung Hom, Kowloon, Hong Kong}

\author{Yanxiang Zhao\fnref{cor3}}
\address{Department of Mathematics, The George Washington University, Washington DC, 20052}

\fntext[cor1]{Email: wangbo.luo@polyu.edu.hk}
\fntext[cor2]{Corresponding author. Email: zhonghua.qiao@polyu.edu.hk}
\fntext[cor3]{Corresponding author. Email: yxzhao@gwu.edu}

\begin{abstract}
We develop a thermodynamically consistent diffuse-interface model for multicomponent membranes. The proposed model is derived from a coupled free energy functional that incorporates protein-dependent bending elasticity, diffuse surface tension, a volume penalty, and a membrane-associated Ohta--Kawasaki energy. Applying the Onsager variational principle, we derive a coupled $L^2$ gradient flow system and its energy dissipation law. We then construct a stabilized alternating ETD1 scheme and a stabilized alternating ETDRK2 scheme with Strang-type composition (alternating Strang-ETDRK2). To the best of our knowledge, the proposed alternating Strang-ETDRK2 scheme has not previously been developed and analyzed for coupled phase field systems. We further prove the discrete energy dissipation for both schemes under some regularity assumptions on the numerical solutions. Numerical experiments in two and three dimensions validate the discrete energy dissipation law, and present the protein segregation and membrane deformation produced by the proposed model.
\end{abstract}

\begin{keyword}
multicomponent membrane, Ohta-Kawasaki model, thermodynamically consistent model, alternating ETD schemes, energy dissipation
\end{keyword}

\date{\today}
\maketitle

\section{Introduction}
\label{sec:introduction}

In biological systems, lipid-bilayer membranes are deformable surfaces whose composition and geometry are strongly coupled. Experiments on giant unilamellar vesicles (GUVs) \cite{veatch2003,baumgart2003,baumgart2005,schmid2017} have shown that coexisting liquid phases form membrane domains with different thicknesses and mechanical properties, while membrane curvature and line tension can generate buds, necks, tubes, and periodic shapes. When membrane-associated proteins are present, their composition can also modify the elastic bending rigidity, and curvature gradients can redistribute or localize softer and stiffer membrane areas \cite{tian2009,yuan2021,usery2018,rinaldin2020}. These observations show that the protein composition changes the bending rigidity of the membrane, while membrane geometry affects the energetic preference for the proteins. The main modeling objective of this paper is to formulate the coupling between membrane and proteins from a coupled free energy functional.

In recent years, a classic model for lipid-bilayer membranes is developed based on the Helfrich energy \cite{helfrich1973},
\begin{align*}
    E = \int_{\Gamma}\left[a_1+a_2(H-c_0)^2+a_3G\right]\mathrm{d} s,
\end{align*}
where $a_1$, $a_2$, and $a_3$ denote the surface tension, bending rigidity, and Gaussian-curvature rigidity, respectively. $H$ is the mean curvature of the membrane surface, $c_0$ is the spontaneous mean curvature, and $G$ is the Gaussian curvature. Diffuse-interface methods provide an effective approach for modeling membrane deformation. The phase field bending energy introduced in \cite{du2004} has been used to study vesicle equilibria and shape dynamics. This formulation was later extended to model multicomponent membranes \cite{wang2008,lowengrub2009,bellettini2010}. phase field formulations for two-component biological membranes have also been studied on fixed computational domains \cite{ratz2006,elliott2010}. These approaches allow shape deformations and topological changes without explicitly tracking the moving surface.

The Ohta-Kawasaki (OK) model, originally introduced to describe self-assembled structures in diblock copolymers \cite{ohta1986,bates1990block}, provides a useful framework for simulating microphase separation \cite{LuoZhao_PhysicaD2024,LuoZhao_AAMM2024,luo2026double,luo2026nesterov, Wang_Ren_Zhao2019, Choi_Zhao2021}. Its free energy functional is given by \cite{Xu_Zhao2019, LuoZhao_NumPDE2024}
\begin{align}\label{eqn:OK}
        E^{\mathrm{OK}}[u] = \int_{\Omega}\left[\frac{\ve_u}{2}|\nabla u|^2+\frac{1}{\ve_u}W(u)\right] \mathrm{d}x + \frac{\gamma}{2}\int_{\Omega}|(-\Delta)^{-\frac{1}{2}}(u-\omega)|^2 \ \mathrm{d}x,
\end{align}
with the volume constraint
\begin{align*}
    \int_{\Omega}u\ \text{d}x = \omega|\Omega|.
\end{align*}
Here, $0<\ve_u\ll1$ represents the width of the diffuse interface, $\gamma>0$ is the strength of the long-range interaction, and $W(u) = 18(u^2-u)^2$ is a double-well potential with minima at $u=0$ and $u=1$. The variable $u$ represents the density of one component, while the other component is represented implicitly by $1-u$. The first integral in \eqref{eqn:OK} represents the local surface energy
and favors larger domains. In contrast, the second integral describes the long-range repulsive interaction and favors smaller domains. The competition between these two terms produces microphase separation. Therefore, the OK energy provides a mechanism for describing protein segregation on the membrane \cite{luo2026ok}.

In our previous work \cite{luo2026ok}, we localized the OK energy to a diffuse membrane and coupled it to a force-balance phase field equation. The previous model describes mechanochemical dynamics in which proteins segregate on the membrane and generate protein-dependent forces that deform the membrane. A subsequent study developed exponential time differencing (ETD) methods for the membrane-associated OK dynamics and the coupled phase field system \cite{luoETDmembrane}. In contrast to the mechanochemical formulation, the current work develops a thermodynamically consistent diffuse-interface model for multicomponent membranes, in which the membrane and protein equations are derived as a coupled $L^2$ gradient flow system associated with a new coupled free energy functional. Consequently, the influence of protein composition on membrane deformation and the effect of membrane geometry on protein segregation arise from the variational derivatives of the coupled energy with respect to two phase field variables. Here, we focus on model development, the construction of structure-preserving stabilized ETD schemes, and their numerical implementation. Compared with the previous mechanochemical model \cite{luo2026ok}, the resulting variational structure is also more suitable for rigorous mathematical analysis. In particular, the coupled free energy functional and corresponding gradient flow structure provide a natural framework for exploring the existence of energy minimizers and the well-posedness of the coupled $L^2$ gradient flow dynamics, in a manner similar to that of \cite{joo2021analysis}. We leave these analytical questions for future work.

The model proposed in this work is derived using the Onsager variational principle \cite{onsager1931a,onsager1931b}. In this framework, the total free energy determines the thermodynamic driving forces, while the dissipation potential specifies the kinetic response \cite{doi2011}. Energetic and Onsager-type variational principles have been used to derive vesicle-fluid interactions and thermodynamically consistent phase field systems, and they also guided the construction of structure-preserving numerical methods \cite{du2009energetic,li2019,yue2004diffuse,liu2003phase}. In a related study, the authors of \cite{magi2017} developed a model based on minimum energy dissipation for multicomponent membranes by combining mixing energies with Helfrich elasticity in a two-phase viscous fluid.

For numerical simulations, ETD methods are well suited for stiff equations since they integrate the linear part exactly in time while evaluating the nonlinear remainder explicitly \cite{hochbruck2010}. More importantly, the ETD methods have been shown to preserve physical properties for phase field equations, such as discrete maximum-bound principles and the discrete energy dissipation law \cite{du2019mbp,du2021mbp,fu2022}. ETD methods have also been applied to coupled phase field systems, in which the discrete energy dissipation and bound preservation have been established \cite{duan2024convergence,duan2026third}. These studies show that ETD methods inherit some important physical properties in coupled phase field systems. In phase field models for multicomponent vesicles \cite{du2004,wang2008,lowengrub2009}, nonlinear coupling between membrane and composition variables makes both scheme construction and stability analysis challenging. ETD Runge-Kutta (ETDRK) methods have been successfully applied to phase field elastic-bending equations \cite{wang2016efficient}, providing an effective treatment of the stiff fourth-order operator in long-time simulation. In Section~\ref{sec:etd}, we develop stabilized ETD schemes for the coupled phase field system of multicomponent membranes and prove their discrete energy dissipation under some regularity assumptions.

The main contributions of this work are three-fold. Firstly, we develop a thermodynamically consistent diffuse-interface model that couples membrane deformation and membrane-associated protein segregation through a coupled free energy functional. Using the Onsager variational principle, we derive the coupled $L^2$ gradient flow system and its continuous energy dissipation law. Secondly, we construct a stabilized alternating ETD1 method and a stabilized alternating ETDRK2 method with Strang-type composition (Strang-ETDRK2) \cite{strang1968construction} for the coupled system. To the best of our knowledge, this Strang-ETDRK2 scheme has not previously been studied for the coupled phase field system for multicomponent membranes. Thirdly, we prove the discrete energy dissipation for both schemes under some regularity assumptions on the numerical solutions. Numerical experiments in two and three dimensions validate the discrete energy dissipation and illustrate protein segregation and membrane deformation produced by the proposed model.

The remainder of the paper is organized as follows. Section~\ref{sec:setting} introduces basic model settings and the coupled free energy functional. Section~\ref{sec:onsager} derives the coupled $L^2$ gradient flow system using the Onsager variational principle and establishes the continuous energy dissipation law. Section~\ref{sec:etd} presents the stabilized linear-nonlinear splitting technique, the alternating ETD schemes, and their discrete energy dissipation. Section~\ref{sec:numerics} reports numerical experiments for the proposed model and demonstrates the discrete energy dissipation of the schemes. Finally, Section~\ref{sec:conclusion} gives concluding remarks and future directions.

\section{Model Setting and the Coupled Free Energy Functional}
\label{sec:setting}

In this section, we present the coupled free energy functional for multicomponent membranes. Let $\Omega=\bbT^d$, $d\in\{2,3\}$ be a periodic computational domain. We introduce two phase field variables $\phi$ and $u$ to describe the vesicle membrane and the membrane-associated protein, respectively. The phase field function $\phi$ with an interfacial width $\ve_{\phi}$ represents the vesicle membrane, which takes values near $1$ in the interior and $0$ in the exterior. The phase field variable $u$ with an interfacial width $\ve_u$ represents the density of membrane-associated proteins, which takes values near $1$ in protein-rich regions and $0$ in protein-poor regions. 

The model is constructed from the coupled free energy functional
\begin{align}
E_{\rm total}[\phi,u]
={}&
E_{\rm bend}[\phi,u]
+
E_{\rm geom}[\phi]
+
E_{\rm OK}[\phi,u].
\label{eq:total_energy}
\end{align}
Here, $E_{\rm bend}$ describes the protein-dependent elastic bending energy, $E_{\rm geom}$ contains the surface tension energy and the enclosed-volume penalty, and $E_{\rm OK}$ is the membrane-associated OK energy. The $E_{\rm OK}$ consists of a short-range interaction $E_{\rm loc}$, a long-range interaction $E_{\rm nl}$, and a soft constraint on protein density $E_{\rm mass}$. For simplicity, we use the double-well potential $W(s)=18s^2(1-s)^2$ for both the membrane variable $\phi$ and the protein variable $u$. For the discussion of energy dissipation in Sections~\ref{sec:onsager} and \ref{sec:etd}, we adopt the $W(s)$ by a quadratic extension in \cite{Xu_Zhao2019}, so that $W''$ has an upper bound and is Lipschitz continuous. In this paper, we denote by $L_{W''}$ the upper bound of $W''$ and by $L_{W'''}$ the Lipschitz constant of $W''$.

The protein-dependent elastic bending energy is defined by \cite{du2004,wang2008}
\begin{align}
E_{\rm bend}[\phi,u]
=
\int_\Omega
\frac{\kappa_{{\rm bend}}(u)}{2\ve_\phi}
\left(\ve_\phi\Delta\phi -\frac{1}{\ve_\phi}W'(\phi)\right)^2
\,\ud x,
\label{eq:bending-energy}
\end{align}
where $\kappa_{{\rm bend}}(u)$ is a protein-dependent bending rigidity. Following the phase-dependent rigidity proposed in \cite{wang2008}, we take
\begin{align}
\kappa_{{\rm bend}}(u)
=
\kappa_-
+
(\kappa_+-\kappa_-)p(u),
\quad
\kappa_-,\kappa_+>0,
\nonumber
\end{align}
where $p$ is the piecewise $C^2$ smooth function
\begin{align}
p(u)=
\begin{cases}
0, & u\leq0,\\[1mm]
6u^5-15u^4+10u^3, & 0<u<1,\\[1mm]
1, & u\geq1.
\end{cases}
\nonumber
\end{align}
In particular, $0\leq p(u)\leq1$ and $p'(0)=p'(1)=p''(0)=p''(1)=0$. This choice gives $\kappa_{{\rm bend}}(1) = \kappa_{+}$ and $\kappa_{{\rm bend}}(0) = \kappa_{-}$ in protein-rich and protein-poor regions, respectively. We further denote by $L_{\kappa''}$ the upper bound of $\kappa''_{{\rm bend}}(u)$. 

We then collect the surface tension energy and volume constraint as
\begin{align}
E_{\rm geom}[\phi]
=
\lambda_{\rm surf}
\int_\Omega
\left[
\frac{\ve_\phi}{2}|\nabla\phi|^2
+
\frac{1}{\ve_\phi}W(\phi)
\right]
\,\ud x
+
\frac{K_{{\rm vol}}}{2}
\left(
V[\phi]-V_0
\right)^2,
\label{eq:geometric-energy}
\end{align}
where $V[\phi]$ denotes the enclosed volume and is given by
\begin{align*}
    V[\phi]& = \int_\Omega\phi\,\ud x,
\end{align*}
$\lambda_{\rm surf}\geq0$ is the surface tension coefficient, $V_0$ represents the prescribed vesicle volume, and $K_{{\rm vol}}\geq0$ denotes the volume penalty coefficient.

The membrane-associated OK energy is given by
\begin{align}
E_{\rm OK}[\phi,u]
=
\lambda_u
\left(
E_{\rm loc}[\phi,u]
+
E_{\rm nl}[\phi,u]
+
E_{\rm mass}[\phi,u]
\right),
\label{eq:MOK-energy}
\end{align}
where $\lambda_u>0$ is the trade-off between 
the membrane energy and the membrane-associated OK energy. To confine it near the diffuse membrane, we introduce a localization function
\begin{align}
    g(\phi) =  \frac{18}{\ve_{\phi}}(\phi^2-\phi)^2, \nonumber
\end{align}
to describe the interfacial region of $\phi$. In order to study the discrete energy dissipation law (see Section~\ref{sec:etd}),
we introduce an extension $\widetilde{g}(s)$ as 
\begin{align}
\widetilde{g}(s)
=
\frac{18}{\ve_\phi}
\begin{cases}
M^2(1+M)^2
+
M(1+M)(1+2M)\bar{M},
& s\leq-M-\bar{M},
\\[2mm]
\begin{aligned}
&M^2(1+M)^2
+2M(1+M)(1+2M)(-M-s)
\\[0mm]
&
-\frac{M(1+M)(1+2M)}{\bar{M}}
(-M-s)^2,
\end{aligned}
& -M-\bar{M}<s<-M,
\\[2mm]
\displaystyle
s^2(1-s)^2,
& -M\leq s\leq1+M,
\\[3mm]
\begin{aligned}
&M^2(1+M)^2
+2M(1+M)(1+2M)(s-1-M)
\\[0mm]
&
-\frac{M(1+M)(1+2M)}{\bar{M}}
(s-1-M)^2,
\end{aligned}
& 1+M<s<1+M+\bar{M},
\\[2mm]
M^2(1+M)^2
+
M(1+M)(1+2M)\bar{M},
& s\geq1+M+\bar{M},
\end{cases}
\nonumber
\end{align}
for sufficiently large $M$ and $\bar{M}>0$. This extension will guarantee that $g, g'$ has upper bounds and $\widetilde{g}'$ is Lipschitz continuous. For brevity, we will still use $g(\phi)$ to represent $\widetilde{g}(\phi)$. Throughout this paper, we denote by $L_{g}$, $L_{g'}$ the upper bounds of $g$ and $g'$, respectively, and by $L_{g''}$ the Lipschitz constant of $g'$.

The short-range interaction of the membrane-associated OK energy is given by
\begin{align}
E_{\rm loc}[\phi,u]
=
\int_\Omega
g(\phi)
\left[
\frac{\ve_u}{2}|\nabla u|^2
+
\frac{1}{\ve_u}W(u)
\right]
\,\ud x.
\label{eq:local-energy}
\end{align}
To model the long-range interaction, let $L_0^2(\Omega)=\left\{f\in L^2(\Omega):\int_\Omega f\,\ud x=0\right\}$, and denote by $(-\Delta)^{-1}$ the inverse of the periodic negative Laplacian on $L_0^2(\Omega)$. If $f$ is not zero-mean, $(-\Delta)^{-1}$ is naturally extended as $(-\Delta)^{-1}f(x):=(-\Delta)^{-1}(f(x)-\bar{f})$ by removing the mean value $\bar{f}$. The long-range interaction is then given by the inverse Laplacian as
\begin{align}
\begin{aligned}
E_{\rm nl}[\phi,u]
&=\frac\gamma2\int_\Omega \left[(-\Delta)^{-1/2}
\left[g(\phi)(u-\omega)\right]\right]^2\,\ud x
\\
&=\frac\gamma2
\left\langle
g(\phi)(u-\omega),(-\Delta)^{-1}\left[g(\phi)(u-\omega)\right]
\right\rangle
=\frac\gamma2\int_\Omega\abs{\nabla\psi}^2\,\ud x,
\end{aligned}
\label{eq:nonlocal-energy}
\end{align}
where $\gamma>0$ is the long-range interaction strength, $\omega\in(0,1)$ is a prescribed protein fraction, and $\psi = (-\Delta)^{-1}\left[g(\phi)(u-\omega)\right]$. The competition between \eqref{eq:local-energy} and \eqref{eq:nonlocal-energy} follows the OK model \cite{ohta1986}, and the membrane localization is motivated by the membrane-associated OK framework in \cite{luo2026ok}. Soft penalty of the protein area is given by
\begin{align}
E_{\rm mass}[\phi,u]
=
\frac{K_{{\rm mass}}}{2}
\left[
\int_\Omega
g(\phi)(u-\omega)
\,\ud x
\right]^2,
\label{eq:mass-energy}
\end{align}
with $K_{{\rm mass}}\geq0$.  Note that penalties for the enclosed membrane volume in (\ref{eq:geometric-energy}) and protein fraction in (\ref{eq:mass-energy}) favor their prescribed values but do not impose exact conservation.

\begin{remark}\label{rem:line-tension-OK}
In earlier diffuse-interface models for multicomponent vesicles \cite{wang2008}, the line tension energy is represented by a phase field formulation of the form
\begin{align}
L[\phi,u]
=
\lambda_{{\rm line}}\int_\Omega
\left[
\frac{\ve_\phi}{2}|\nabla\phi|^2
+
\frac{1}{\ve_\phi}W(\phi)
\right]
\left[
\frac{\ve_u}{2}|\nabla u|^2
+
\frac{1}{\ve_u}W(u)
\right]
\,\ud x.
\label{eqn:line-ten}
\end{align}
Here, $\lambda_{{\rm line}}$ is the line tension coefficient. In the sharp interface limit, the two terms in the sum $\frac{\ve_\phi}{2}|\nabla\phi|^2+\frac{1}{\ve_\phi}W(\phi)$ become equivalent \cite{modica1987,li2013variational}, thus \(\lambda_u E_{\mathrm{loc}}\) approximates the line-tension energy in \eqref{eqn:line-ten}, up to this normalization factor. Since the membrane-associated OK energy $E_{\rm OK}[\phi,u]$ \eqref{eq:MOK-energy} includes an additional long-range interaction for microphase separation, we introduce the strength $\gamma$ to compete the short-range and long-range interactions.
\end{remark}

\section{$L^2$ Gradient Flow from the Onsager Variational Principle}
\label{sec:onsager}

In this section, we derive the coupled evolution equations from the coupled free energy \eqref{eq:total_energy} through the Onsager variational principle \cite{onsager1931a,doi2011,wang2021}. We introduce the rate space $\mathcal V= L^2(\Omega)\times L^2(\Omega)$ with the inner product
\begin{align}
\left\langle
\begin{pmatrix}\eta\\v\end{pmatrix},
\begin{pmatrix}\zeta\\w\end{pmatrix}
\right\rangle_{\mathcal V}
=
\int_\Omega \eta\zeta\,\ud x
+
\int_\Omega vw\,\ud x, \quad \forall\left(\eta,v\right)^\top,\left(\zeta,w\right)^\top \in \mathcal V,
\nonumber
\end{align}
and the associated norm $\left\|
\left(\eta,v\right)^\top
\right\|_{\mathcal V}^2
=
\|\eta\|_{L^2}^2+\|v\|_{L^2}^2$.

Let $\bz=(\phi,u)^\top\in\mathcal X=\left\{(\phi,u)\in H^4_{\rm per}(\Omega)\times H^2_{\rm per}(\Omega)\right\}$ be the state variable.
We take the first variation of $E_{\rm total}[\phi,u]$ along smooth periodic perturbations $(\widetilde{\phi},\widetilde{u})^\top \in \mathcal V$, which admits the $L^2$ Riesz representation 
\begin{align}
D E_{\rm total}[\phi,u]\left(\widetilde{\phi},\widetilde{u}\right)
=\int_\Omega\frac{\delta E_{\rm total}}{\delta\phi}\widetilde{\phi}\,\ud x
+\int_\Omega\frac{\delta E_{\rm total}}{\delta u}\widetilde{u}\,\ud x.
\nonumber
\end{align}
For a sufficiently smooth trajectory $t\mapsto\bz(t)=(\phi(t),u(t))^\top$ in $\mathcal X$, we have
\begin{align}
\frac{\ud}{\ud t} E_{\rm total}[\phi(t),u(t)]
=
\left\langle\bmu,\bv\right\rangle_{\mathcal V}
=
\int_\Omega\frac{\delta E_{\rm total}}{\delta\phi}\phi_t\,\ud x
+\int_\Omega\frac{\delta E_{\rm total}}{\delta u}u_t\,\ud x,
\label{eq:energy-rate-abstract}
\end{align}
where 
\begin{align}
\bv= \left(\phi_t,u_t\right)^\top,
\quad
\bmu
=\left(\frac{\delta E_{\rm total}}{\delta\phi}, \frac{\delta E_{\rm total}}{\delta u}\right)^\top.
\nonumber
\end{align}

Consider the Onsager dissipation potential given by
\begin{align}
\Phi(\bv)
&=\frac12\left\langle\bv,\mathbb K\bv\right\rangle_{\mathcal V},
\nonumber
\end{align}
where $\mathbb K:\mathcal V \to \mathcal V$ is a symmetric positive-definite operator similar to the Onsager operator in \cite{cang2026synchronization}. Throughout this paper, we take $\mathbb K = \diag\{M_\phi^{-1}I,M_u^{-1}I\}$ with mobility constants $M_{\phi},M_{u}>0$ for simplicity, and the corresponding potential becomes
\begin{align}
\Phi(\bv)
&=\frac12\left\langle\bv,\mathbb K\bv\right\rangle_{\mathcal V}
=\frac12\int_\Omega
\left(\frac{|\phi_t|^2}{M_\phi}
+\frac{|u_t|^2}{M_u}\right)\ud x.
\nonumber
\end{align}
At a fixed state $\bz$, Onsager's variational principle selects the optimal rate $\bv$ that minimizes the Rayleighian $\mathcal{R}_{\bz}$,
\begin{align}
\calR_{\bz}(\bv)
=
\Phi(\bv)
+
D E_{\rm total}[\bz](\bv)
=
\frac12
\left\langle\bv,\mathbb K \bv\right\rangle_{\mathcal V}
+
\left\langle\bmu,\bv\right\rangle_{\mathcal V}.
\label{eq:rayleighian}
\end{align}
Since \(M_\phi,M_u>0\), the Rayleighian is strictly convex and coercive. Its unique minimizer satisfies
\begin{align}
\phi_t=-M_\phi\frac{\delta E_{\rm total}}{\delta\phi},
\quad
u_t=-M_u\frac{\delta E_{\rm total}}{\delta u}.
\label{eq:onsager-flow}
\end{align}
Consequently, every sufficiently smooth solution satisfies
\begin{align}
\frac{\ud}{\ud t} E_{\rm total}[\phi,u]
=-M_\phi\norm{\frac{\delta E_{\rm total}}{\delta\phi}}_{L^2}^2
-M_u\norm{\frac{\delta E_{\rm total}}{\delta u}}_{L^2}^2
=-2\Phi(\bv)\leq0.
\label{eq:onsager-energy-law}
\end{align}
This choice gives nonconservative $L^2$ gradient flow dynamics. 

We next collect the first variations of all terms in $E_{\rm total}[\phi,u]$ \eqref{eq:total_energy} in the following lemma. For simplicity, we omit the detailed derivation.

\begin{lemma}\label{lemma:first-varia}
Let $(\widetilde{\phi},\widetilde{u})$ be a smooth perturbation of $(\phi,u)\in \mathcal X$. Then, each term of the total energy satisfies
\begin{align*}
& DE_{\rm bend}[\phi,u]\left(\widetilde{\phi},\widetilde{u}\right)
={}\int_\Omega\frac1{\ve_{\phi}}
\left[
{\ve_{\phi}}\Delta\bigl(\kb(u)\cdot\bigr)
-\frac1{\ve_{\phi}}\kb(u)W''(\phi)
\right]\left(\ve_\phi\Delta\phi -\frac{1}{\ve_\phi}W'(\phi)\right)\widetilde{\phi}\,\ud x\nonumber\\
& \hspace{3.5cm} +\int_\Omega\left[\frac{\kb'(u)}{2{\ve_{\phi}}}\left(\ve_\phi\Delta\phi -\frac{1}{\ve_\phi}W'(\phi)\right)^2\right]\widetilde{u}\,\ud x, \\
& DE_{\rm geom}[\phi](\widetilde{\phi})
={}\int_\Omega
\left[\lambda_{\rm surf}\left(-{\ve_{\phi}}\Delta\phi+\frac1{\ve_{\phi}} W'(\phi)\right)
+K_{{\rm vol}}\bigl(V[\phi]-V_0\bigr)\right]\widetilde{\phi}\,\ud x,\\
& DE_{\rm loc}[\phi,u]\left(\widetilde{\phi},\widetilde{u}\right)
={}\int_\Omega g'(\phi)\left( \frac{\ve_u}{2}|\nabla u|^2+\frac1{\ve_u}W(u) \right)\widetilde{\phi}\,\ud x
\nonumber\\
&\hspace{3.5cm}+\int_\Omega
\left[-\ve_u\div(g(\phi)\nabla u)
+\frac{g(\phi)}{\ve_u}W'(u)\right]\widetilde{u}\,\ud x,\\
& DE_{\rm nl}[\phi,u]\left(\widetilde{\phi},\widetilde{u}\right)
={}\gamma\int_\Omega g'(\phi)
\bigl(u-\omega\bigr)(-\Delta)^{-1}\left[g(\phi)(u-\omega)\right]\widetilde{\phi}\,\ud x + \gamma\int_\Omega g(\phi)(-\Delta)^{-1}\left[g(\phi)(u-\omega)\right] \widetilde{u}\,\ud x, \\
& DE_{\rm mass}[\phi,u]\left(\widetilde{\phi},\widetilde{u}\right)
={}K_{{\rm mass}}\left(\int_\Omega g(\phi)(u-\omega)\,\ud x\right)\int_\Omega
\left[g'(\phi)(u-\omega)\widetilde{\phi}+g(\phi)\widetilde{u}\right]\ud x.
\end{align*}
\end{lemma}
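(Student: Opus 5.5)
Each identity will be obtained as a Gateaux derivative. I will compute $\frac{\ud}{\ud\epsilon}E[\phi+\epsilon\widetilde\phi,u+\epsilon\widetilde u]\big|_{\epsilon=0}$ term by term, differentiating under the integral sign. This is justified because $(\phi,u)\in\mathcal X$ is smooth enough ($H^4\times H^2$). In addition, $W$, $g$ and $\kb$ are $C^2$ with bounded derivatives on the relevant range, thanks to the quadratic extensions of $W$ and $g$ and the piecewise $C^2$ choice of $p$. I will then move every derivative off the perturbation by periodic integration by parts, so that each variation takes the form $\int(\cdot)\widetilde\phi+\int(\cdot)\widetilde u$.

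\textbf{Bending term.} Set $w=\ve_\phi\Delta\phi-\ve_\phi^{-1}W'(\phi)$. Its linearization in $\phi$ is $\ve_\phi\Delta\widetilde\phi-\ve_\phi^{-1}W''(\phi)\widetilde\phi$. The derivative of $E_{\rm bend}$ is therefore
\[
\int \frac{\kb'(u)}{2\ve_\phi}w^2\widetilde u+\int\frac{\kb(u)}{\ve_\phi}w\Bigl(\ve_\phi\Delta\widetilde\phi-\frac1{\ve_\phi}W''(\phi)\widetilde\phi\Bigr).
\]
I will apply Green's identity twice on $\bbT^d$ to get $\int \kb w\,\Delta\widetilde\phi=\int\Delta(\kb w)\widetilde\phi$. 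This step needs $\kb(u)w\in H^2$, which is where the regularity in $\mathcal X$ enters. The result is exactly the stated operator form $\ve_\phi\Delta(\kb(u)\,\cdot\,)$ applied to $w$.

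\textbf{Geometric term.} This is the standard Allen--Cahn variation, obtained by integrating $\nabla\phi\cdot\nabla\widetilde\phi$ by parts. For the volume penalty, the chain rule gives $K_{\rm vol}(V-V_0)\int\widetilde\phi$, which is written as an integrand against $\widetilde\phi$.

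\textbf{OK terms.} For $E_{\rm loc}$, the product rule gives a $g'(\phi)\widetilde\phi$ term times the local density. The $u$-part is $\int g(\phi)\bigl(\ve_u\nabla u\cdot\nabla\widetilde u+\ve_u^{-1}W'(u)\widetilde u\bigr)$, and integrating by parts turns it into $-\ve_u\div(g\nabla u)$.

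For $E_{\rm nl}$, set $f=g(\phi)(u-\omega)$. Its perturbation is $\widetilde f=g'(\phi)(u-\omega)\widetilde\phi+g(\phi)\widetilde u$. The key fact is that $(-\Delta)^{-1}$, extended by mean removal, is self-adjoint on $L^2$: for any $f_1,f_2\in L^2$,
\[
\langle f_1,(-\Delta)^{-1}f_2\rangle=\langle f_1-\bar f_1,(-\Delta)^{-1}(f_2-\bar f_2)\rangle,
\]
because $(-\Delta)^{-1}f_2$ has zero mean. This expression is symmetric, so
\[
\frac{\ud}{\ud\epsilon}\frac\gamma2\langle f,(-\Delta)^{-1}f\rangle=\gamma\langle\widetilde f,(-\Delta)^{-1}f\rangle,
\]
which gives both stated terms.

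For $E_{\rm mass}$, the chain rule gives $K_{\rm mass}\bigl(\int f\bigr)\int\widetilde f$ directly.

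\textbf{Main obstacle.} The step needing the most care is the nonlocal term. One must check that the mean-removal convention does not produce extra constants: although $\widetilde f$ need not have zero mean, pairing it with the zero-mean function $(-\Delta)^{-1}f$ makes its mean irrelevant. The bending term is mainly bookkeeping. The only subtlety there is that the adjoint of $\widetilde\phi\mapsto\kb(u)\Delta\widetilde\phi$ is $\Delta(\kb(u)\,\cdot\,)$, not $\kb(u)\Delta$, which explains the notation in the statement.
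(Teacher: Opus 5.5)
Your proposal is correct and carries out the standard G\^ateaux-derivative computation that the paper omits (it says the detailed derivation is left out): differentiate under the integral sign, move all derivatives off $(\widetilde\phi,\widetilde u)$ by periodic integration by parts (including $\int \kb(u)w\,\Delta\widetilde\phi=\int\Delta(\kb(u)w)\,\widetilde\phi$), and use the symmetry of the mean-removed $(-\Delta)^{-1}$ for $E_{\rm nl}$. One small correction: after the extension, $g$ stands for $\widetilde g$, which is only $C^{1,1}$ rather than $C^2$ (its second derivative jumps at $s=-M$ and $s=1+M$); this does not affect your argument, since only $g'$ and $g(\phi)\in W^{1,\infty}$ are used.
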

Combining the expressions in Lemma~\ref{lemma:first-varia} gives $\frac{\delta E_{\rm total}}{\delta\phi}$ and $\frac{\delta E_{\rm total}}{\delta u}$. Finally, the $L^2$ gradient flow system is obtained from the unique Onsager minimizer as
\begin{align}
\phi_t={}& -M_\phi\frac{\delta E_{\rm total}}{\delta\phi} \nonumber\\= {}&
-M_\phi\left[\begin{aligned}
    &\frac1{\ve_{\phi}}
\left[
{\ve_{\phi}}\Delta\bigl(\kb(u)\cdot\bigr)
-\frac1{\ve_{\phi}}\kb(u)W''(\phi)
\right]\left(\ve_\phi\Delta\phi -\frac{1}{\ve_\phi}W'(\phi)\right)
\\
&+\lambda_{\mathrm{surf}}\left(-{\ve_{\phi}}\Delta\phi+\frac1{\ve_{\phi}} W'(\phi)\right)
+K_{{\rm vol}}\bigl(V[\phi]-V_0\bigr)
\\
& +\lambda_u g'(\phi) \left[\begin{aligned}
    &\left(\frac{\ve_u}{2}|\nabla u|^2+\frac1{\ve_u}W(u)\right)
+\gamma \bigl(u-\omega\bigr)(-\Delta)^{-1}\left[g(\phi)(u-\omega)\right]
\\
&+K_{{\rm mass}}(u-\omega)\left(\int_\Omega g(\phi)(u-\omega)\,\ud x\right)
\end{aligned}\right]
\end{aligned}\right],
\label{eq:phi-flow}\\
u_t={}&-M_u\frac{\delta E_{\rm total}}{\delta u} \nonumber\\
= {}&-M_u\left[
\begin{aligned}
    &\frac{\kb'(u)}{2\ve_{\phi}}\left(\ve_\phi\Delta\phi -\frac{1}{\ve_\phi}W'(\phi)\right)^2
\\
&+\lambda_u \left[
\begin{aligned}
   & -\ve_u\div\bigl(g(\phi)\nabla u\bigr)
+\frac{g(\phi)}{\ve_u}W'(u)+\gamma g(\phi)(-\Delta)^{-1}\left[g(\phi)(u-\omega)\right]
\\
&+K_{{\rm mass}}\left(\int_\Omega g(\phi)(u-\omega)\,\ud x\right)g(\phi)
\end{aligned}\right]
\end{aligned}
\right].
\label{eq:u-flow}
\end{align}
This gradient flow system satisfies the continuous energy dissipation law
\begin{align}
\frac{\ud}{\ud t} E_{\rm total}[\phi(t),u(t)]
=-\frac{1}{M_\phi}\int_\Omega\abs{\phi_t}^2\,\ud x
-\frac{1}{M_u}\int_\Omega\abs{u_t}^2\,\ud x \leq 0.
\label{eq:energy-law-time}
\end{align}
In what follows, our main focus is to develop proper numerical schemes for the coupled system \eqref{eq:phi-flow}-\eqref{eq:u-flow} to inherit the dissipation law at the discrete level. Hereafter, we will always take $M_\phi = M_u = 1$ for the sake of simplicity. 

\section{Exponential Time Differencing for the Coupled System}\label{sec:etd}

In this section, we construct the stabilized alternating ETD1 and Strang-ETDRK2 schemes and prove the discrete energy dissipation law.

\subsection{Stabilized Linear-nonlinear Splitting for the Coupled System}\label{subsec:linear-split}

We first apply a stabilized linear-nonlinear splitting technique \cite{luoETDmembrane,wang2016efficient} to rewrite the coupled system \eqref{eq:phi-flow}-\eqref{eq:u-flow}. To retain the resulting linear operator being constant-coefficient, self-adjoint, and positive-definite, we introduce a constant reference rigidity for the protein-dependent bending rigidity $\kappa_{{\rm bend}}(u)$
\begin{align*}
    \kappa_{\rm ref}>0,
    \quad
    \widetilde\kappa_{{\rm bend}}(u)
    =\kappa_{{\rm bend}}(u)-\kappa_{\rm ref}.
\end{align*}

For the $\phi$-equation \eqref{eq:phi-flow}, introducing two stabilization constants $\alpha_1,\alpha_2>0$, we can rewrite the equation as
\begin{align}
    \phi_t +\mathcal L_\phi\phi
    =
    \mathcal R_\phi(\phi,u),
    \label{eq:phi-linear-splitting}
\end{align}
where the linear part is
\begin{align}
    \mathcal L_\phi
    ={}&
    \kappa_{\rm ref}{\ve_{\phi}}
    \left(
        \Delta-\frac{\alpha_2}{\ve_{\phi}^2}I
    \right)
    \left(
        \Delta-\frac{\alpha_1}{\ve_{\phi}^2}I
    \right)
    +
    \lambda_{\rm surf}
    \left(
        -\ve_{\phi}\Delta+\frac{\alpha_1}{{\ve_{\phi}}}I
    \right).
\label{eq:phi-linear-operator}
\end{align}
The nonlinear remainder is then
\begin{align}
\begin{aligned}
    \mathcal R_\phi(\phi,u)
    ={}&
    \frac{\kappa_{\rm ref}}{{\ve_{\phi}}}
    \left[
        \Delta \left(W'(\phi)-\alpha_1\phi\right)
        +
        \left(W''(\phi)-\alpha_2\right)
        \left(
            \Delta\phi-\frac{\alpha_1}{\ve_{\phi}^2}\phi
        \right)
    \right]
\\
    &-
    \left[
        \frac{\kappa_{\rm ref}}
             {\ve_{\phi}^3}W''(\phi)
        +
        \frac{\lambda_{\rm surf}}{{\ve_{\phi}}}
    \right]\left(W'(\phi)-\alpha_1\phi\right)
\\
    &-
    \frac{1}{{\ve_{\phi}}}
    \left[
        {\ve_{\phi}}\Delta
        \left(
            \widetilde\kappa_{{\rm bend}}(u)\cdot
        \right)
        -
        \frac1{\ve_{\phi}}
        \widetilde\kappa_{{\rm bend}}(u)
        W''(\phi)
    \right]\left({\ve_{\phi}}\Delta\phi-\frac1{\ve_{\phi}}W'(\phi)\right)
\\
    &-
     K_{{\rm vol}}
    \bigl(V[\phi]-V_0\bigr)
    -
    \lambda_u g'(\phi)\left(\frac{\ve_u}{2}|\nabla u|^2+\frac1{\ve_u}W(u)\right)
\\
    &-
    \lambda_u \gamma g'(\phi)
    \bigl(u-\omega\bigr)\psi
    -
    \lambda_u K_{{\rm mass}}\left(\int_\Omega g(\phi)(u-\omega)\,\ud x\right)
     g'(\phi)(u-\omega).
\end{aligned}
\nonumber
\end{align}
For the $u$-equation \eqref{eq:u-flow}, we introduce two stabilization constants $\beta_1,\beta_2>0$, so it can be written exactly as
\begin{align}
u_t+\mathcal L_u u
=
\mathcal R_u(\phi,u),
\label{eq:u-linear-splitting}
\end{align}
where
\begin{align}
\mathcal L_u
=
\lambda_u
\left(
    \beta_2I-\ve_u\beta_1\Delta
\right),
\label{eq:u-linear-operator}
\end{align}
and the nonlinear remainder is
\begin{align}
\begin{aligned}
\mathcal R_u(\phi,u)
={}&
\lambda_u
\left[
\begin{aligned}
    & \ve_u
\nabla\cdot
\left(
    \bigl(g(\phi)-\beta_1\bigr)\nabla u
\right)
+
\beta_2u
-
\frac{g(\phi)}{\ve_u}W'(u) \\
& -
\gamma g(\phi)\psi
-
K_{{\rm mass}}g(\phi)\left(\int_\Omega g(\phi)(u-\omega)\,\ud x\right)
\end{aligned}
\right]
\\
&-
\frac{\kb'(u)}
{2\ve_{\phi}}
\left({\ve_{\phi}}\Delta\phi-\frac1{\ve_{\phi}}W'(\phi)\right)^2.
\end{aligned}
\nonumber
\end{align}

\subsection{Spatial Discretization by Second-order Finite Difference}
\label{sec:discrete-setting}
In this subsection, we discretize the spatial differential operators by the second-order finite difference method. For clarity, we present the notation in two dimensions. The three-dimensional case is defined similarly. Let $\Omega=[-X,X)\times[-Y,Y)\subset\mathbb R^2$ be a rectangular domain with periodic boundary conditions. Let $N_x$ and $N_y$ be positive even integers, define $h_x=\frac{2X}{N_x}$ and $h_y=\frac{2Y}{N_y}$, and discretize the spatial domain $\Omega$ by a uniform mesh as $\Omega_h = \Omega\cap (h_x\mathbb{Z}\times h_y\mathbb{Z})$. We then define the index set 
\begin{align}
    S_{h} = \{ (i,j)\in\mathbb{Z}^2;\ i = 1:N_x,\ j = 1:N_y\}.
\end{align}
Let $\mathcal M_h$ denote the space of real-valued periodic grid functions, defined by
\begin{align*}
\mathcal M_h
=
\left\{
V=\{V_{ij}\}_{(i,j)\in\mathbb Z^2}:
V_{i+m_1N_x,j+m_2N_y}=V_{ij},
\
(m_1,m_2)\in\mathbb Z^2
\right\}.
\end{align*}
The corresponding zero-mean subspace is $\mathcal M_{h,0}
=
\left\{
V\in\mathcal M_h:
\left\langle V,\mathbf 1\right\rangle_h=0
\right\}$, where $\mathbf 1$ denotes the constant grid function whose values are equal to one.

For any $V, Z, G \in\mathcal M_h$, the discrete inner product, discrete $L^2$ norm, and discrete $L^{\infty}$ norm are defined as
\begin{align*}
\left\langle V,Z\right\rangle_h
&=
h_xh_y
\sum_{i=1}^{N_x}
\sum_{j=1}^{N_y}
V_{ij}Z_{ij},\quad
\|V\|_{2,h}
=
\sqrt{\left\langle V,V\right\rangle_h},
\quad
\|V\|_{\infty,h}
=
\max_{(i,j)\in S_h}|V_{ij}|.
\end{align*}
Define the following operators
\begin{align*}
&\left(D_x^+V\right)_{i+\frac12,j}
=
\frac{V_{i+1,j}-V_{ij}}{h_x},\quad
\left(D_y^+V\right)_{i,j+\frac12}
=
\frac{V_{i,j+1}-V_{ij}}{h_y}, \\
& \left(d_x^-V\right)_{ij}
=
\frac{V_{i+\frac12,j}-V_{i-\frac12,j}}{h_x},\quad
\left(d_y^-V\right)_{ij}
=
\frac{V_{i,j+\frac12}-V_{i,j-\frac12}}{h_y}, \\
& G^{x}_{i+\frac12,j}
=
\frac{G_{i+1,j}+G_{ij}}{2},\quad
G^{y}_{i,j+\frac12}
=
\frac{G_{i,j+1}+G_{ij}}{2}.
\end{align*}
The standard second-order discrete Laplacian is then defined by
\begin{align*}
\Delta_hV
&=
d_x^-D_x^+V+d_y^-D_y^+V,
\end{align*}
and $-\Delta_h \succeq0$. Equivalently,
\begin{align*}
(\Delta_hV)_{ij}
={}&
\frac{V_{i+1,j}-2V_{ij}+V_{i-1,j}}{h_x^2}+
\frac{V_{i,j+1}-2V_{ij}+V_{i,j-1}}{h_y^2}.
\end{align*}
For $P,Q,R,S \in\mathcal M_h$, define 
\begin{align*}
\left\langle P,R\right\rangle_{x,h}
&=
h_xh_y
\sum_{i=1}^{N_x}
\sum_{j=1}^{N_y}
P_{i+\frac12,j}R_{i+\frac12,j},\quad
\left\langle Q,S\right\rangle_{y,h}
=
h_xh_y
\sum_{i=1}^{N_x}
\sum_{j=1}^{N_y}
Q_{i,j+\frac12}S_{i,j+\frac12}.
\end{align*}
The summation-by-parts identities give
\begin{align*}
\left\langle d_x^-P,V\right\rangle_h
&=
-\left\langle P,D_x^+V\right\rangle_{x,h},\quad\left\langle d_y^-Q,V\right\rangle_h
=
-\left\langle Q,D_y^+V\right\rangle_{y,h}.
\end{align*}
Consequently,
\begin{align*}
\left\langle -\Delta_hV,Z\right\rangle_h
={}&
\left\langle D_x^+V,D_x^+Z\right\rangle_{x,h}
+
\left\langle D_y^+V,D_y^+Z\right\rangle_{y,h}.
\end{align*}

We further define the discrete variable-coefficient diffusion operator $\Delta_{\mathrm S,h}:\mathcal M_h\to\mathcal M_h$ by
\begin{align}
\Delta_{\mathrm S,h}V
={}&
d_x^-\left(G^{x}D_x^+V\right)
+d_y^-\left(G^{y}D_y^+V\right),
\label{eq:Kh}
\end{align}
which reads componentwise as
\begin{align}
\bigl(\Delta_{\mathrm S,h}V\bigr)_{ij}
={}&
\frac{1}{h_x^2}
\left[
G^{x}_{i+\frac12,j}(V_{i+1,j}-V_{ij})
-
G^{x}_{i-\frac12,j}(V_{ij}-V_{i-1,j})
\right]
\nonumber\\
&+
\frac{1}{h_y^2}
\left[
G^{y}_{i,j+\frac12}(V_{i,j+1}-V_{ij})
-
G^{y}_{i,j-\frac12}(V_{ij}-V_{i,j-1})
\right].
\nonumber
\end{align}
By the summation-by-parts identities, 
\begin{align} 
\langle -\Delta_{\mathrm S,h}V,Z\rangle_h ={}& \langle G^xD_x^+V,D_x^+Z\rangle_{x,h} + \langle G^yD_y^+V,D_y^+Z\rangle_{y,h}. \nonumber
\end{align} 
If $0\leq G_{ij}\leq G_{\max} = \max_{(i,j)\in S_h}\{G_{ij}\}<\infty$, then $-\Delta_{\mathrm S,h}$ is self-adjoint and positive semidefinite 
\begin{align} 
0 \leq \langle-\Delta_{\mathrm S,h}V,V\rangle_h &= \langle G^xD_x^+V,D_x^+V\rangle_{x,h} + \langle G^yD_y^+V,D_y^+V\rangle_{y,h} \nonumber\\ &\leq G_{\max} \left( \|D_x^+V\|_{x,h}^2 + \|D_y^+V\|_{y,h}^2 \right) \nonumber\\ &= G_{\max}\langle-\Delta_hV,V\rangle_h. \nonumber
\end{align}
For any $\bPhi,\bU\in \mathcal M_h$, we set
\begin{align}
  & \bB_h(\bPhi)
  =\ve_{\phi}\Delta_h\bPhi-\frac1{\ve_{\phi}} W'(\bPhi), \quad V_h(\bPhi)=\iph{\bPhi}{\one},
  \nonumber\\
  & m_h(\bPhi,\bU)
  =\iph{g(\bPhi)\odotv(\bU-\omega\one)}{\one}, \quad \bPsi_h= (-\Delta_h)^{-1}\left[g(\bPhi)\odotv\left(\bU-\omega\one\right)\right], \nonumber
\end{align}
where $(-\Delta_h)^{-1}$ denotes the discrete inverse Laplacian operator on $\calM_{h,0}$, and if $f\in\calM_{h}$ is not zero-mean, $(-\Delta_h)^{-1}$ is naturally extended as $(-\Delta)_{h}^{-1}f=(-\Delta)_{h}^{-1}\left[f-\frac{1}{|\Omega|}\langle f,\one \rangle_{h}\one\right]$. Thus $\langle \bPsi_h,\one\rangle_h=0$. We further take $G = g(\bPhi)$ in \eqref{eq:Kh} and define 
\begin{align} 
\bigl(\boldsymbol e_{u,h}(\bU)\bigr)_{ij} ={}& \frac{\ve_u}{4}
\left[\begin{aligned}
    &\left(D_x^+\bU\right)_{i+\frac12,j}^2 + \left(D_x^+\bU\right)_{i-\frac12,j}^2 \nonumber\\  &+ \left(D_y^+\bU\right)_{i,j+\frac12}^2 + \left(D_y^+\bU\right)_{i,j-\frac12}^2 
\end{aligned}\right] + \frac{1}{\ve_u}W(\bU_{ij}), \nonumber
\end{align}
where $\boldsymbol e_{u,h}(\bU)$ is the finite difference approximation of term $\frac{\ve_u}{2}\abs{\nabla u}^2+\frac1{\ve_u}W(u)$ in the short-range energy $E_{\rm loc}[\phi,u]$ \eqref{eq:local-energy}. Therefore, $E_{{\rm total}}$ is approximated by the discrete energy $E_{{\rm total},h}(\bPhi,\bU)$
\begin{align}
\begin{aligned}
    E_{{\rm total},h}(\bPhi,\bU)
={}&\underbrace{\frac1{2\ve_{\phi}}
  \iph{\kb(\bU)\odotv\bB_h(\bPhi)^2}{\one}}_{E_{{\rm bend},h}(\bPhi,\bU)}\\
&+\underbrace{\lambda_{\rm surf}
  \left[\frac{\ve_{\phi}}2\iph{-\Delta_h\bPhi}{\bPhi}
  +\frac1{\ve_{\phi}}\iph{W(\bPhi)}{\one}\right]
  +\frac{K_{{\rm vol}}}{2}\left(V_h(\bPhi)-V_0\right)^2}_{E_{{\rm geom},h}(\bPhi)}\\
&+\lambda_u\left[
\begin{aligned}
    & \underbrace{\frac{\ve_u}{2}\iph{-\Delta_{\rm S,h}\bU}{\bU}
  +\frac1{\ve_u}\iph{g(\bPhi)\odotv W(\bU)}{\one}}_{E_{{\rm loc},h}(\bPhi,\bU)}+\underbrace{\frac{K_{{\rm mass}}}{2}m_h(\bPhi,\bU)^2}_{E_{{\rm mass},h}(\bPhi,\bU)}\\
&+\underbrace{\frac\gamma2\iph{g(\bPhi)\odotv\left(\bU-\omega\one\right)}
  {(-\Delta_h)^{-1}\left[g(\bPhi)\odotv\left(\bU-\omega\one\right)\right]}}_{E_{{\rm nl},h}(\bPhi,\bU)}
\end{aligned}
\right].
\end{aligned}
  \label{eq:discrete-energy}
\end{align}

Moreover, the spatial discretization of the stabilized coupled system \eqref{eq:phi-linear-splitting}-\eqref{eq:u-linear-splitting} becomes 
\begin{align}
\bPhi_t + \calL_{\phi,h}\bPhi
    =
    \calR_{\phi,h}(\bPhi;\bU),\quad
\bU_t+\calL_{u,h} \bU
=
\calR_{u,h}(\bU;\bPhi),
\label{eqn:discrete-linear}
\end{align}
where the discrete linear operators are 
\begin{align}
\calL_{\phi,h}
={}&\kappa_{\rm ref}\ve_{\phi}(-\Delta_h)^2
-\left[
  \frac{\kappa_{\rm ref}(\alpha_1+\alpha_2)}{\ve_{\phi}}
  +\lambda_{\rm surf}\ve_{\phi}\right]\Delta_h
+\left[
  \frac{\kappa_{\rm ref}\alpha_1\alpha_2}{\ve_{\phi}^3}
  +\frac{\lambda_{\rm surf}\alpha_1}{\ve_{\phi}}\right]I,
\label{eq:Sphi-direct}\\
\calL_{u,h}
={}&\lambda_u\left(-\ve_u\beta_1\Delta_{h}+\beta_2I\right).
\label{eq:Su-direct}
\end{align}
Since $-\Delta_{h}\succeq0$ and $\kappa_{\rm ref},\alpha_1,\alpha_2,\beta_1,\beta_2>0$, $\calL_{\phi,h}$ and $\calL_{u,h}$ are both self-adjoint and positive-definite operators. The discrete nonlinear remainders can be written as 
\begin{align}
& \begin{aligned}
    \calR_{\phi,h}(\bPhi;\bU)
    ={}&
    \frac{\kappa_{\rm ref}}{{\ve_{\phi}}}
    \left[
        \Delta_h \left(W'(\bPhi)-\alpha_1\bPhi\right)
        +
        \left(W''(\bPhi)-\alpha_2\right)
        \odotv\left(
            \Delta_h\bPhi-\frac{\alpha_1}{\ve_{\phi}^2}\bPhi
        \right)
    \right]
\\
    &-
    \left[
        \frac{\kappa_{\rm ref}}
             {\ve_{\phi}^3}W''(\bPhi)
        +
        \frac{\lambda_{\rm surf}}{{\ve_{\phi}}}
    \right]\odotv\left(W'(\bPhi)-\alpha_1\bPhi\right)
\\
    &-
    \frac{1}{{\ve_{\phi}}}
    \left[
        {\ve_{\phi}}\Delta_h
        \left(
            \widetilde\kappa_{{\rm bend}}(\bU)\odotv \bB_h(\bPhi)
        \right)
        -
        \frac1{\ve_{\phi}}
        \widetilde\kappa_{{\rm bend}}(\bU)\odotv
        W''(\bPhi)\odotv \bB_h(\bPhi)
    \right]
\\
    &-
     K_{{\rm vol}}
    \bigl(V_h(\bPhi)-V_0\bigr)\one
    -
    \lambda_u g'(\bPhi)\odotv\boldsymbol e_{u,h}(\bU)
\\
& -\lambda_u
  \gamma g'(\bPhi)\odotv \left(
    \bU-\omega\one
  \right)\odotv\bPsi_h - \lambda_u K_{{\rm mass}}m_h(\bPhi,\bU)g'(\bPhi)\odotv
  \left(\bU-\omega\one\right),
\end{aligned}
\nonumber \\
& \begin{aligned}
\calR_{u,h}(\bU;\bPhi)
={}&-\lambda_u
\left[
\begin{aligned}
    & \ve_u\beta_1\Delta_h\bU-\ve_u
  \Delta_{\rm S,h}\bU -\beta_2\bU
+\frac{1}{\ve_u}
  g(\bPhi)\odotv W'(\bU)\\
&+\gamma\,
  g(\bPhi)\odotv\bPsi_h
+K_{{\rm mass}} m_h(\bPhi,\bU)\,
  g(\bPhi)
\end{aligned}
\right]
\\
&-\frac1{2\ve_{\phi}}
  \kb^{\prime}(\bU)
  \odotv\bB_h(\bPhi)^2.
\end{aligned}
\nonumber
\end{align}
The discrete nonlinear remainders $\calR_{\phi,h}(\bPhi;\bU)$ and $\calR_{u,h}(\bU;\bPhi)$ can also be expressed directly as
\begin{align}
\calR_{\phi,h}(\bPhi;\bU)=
\calL_{\phi,h}\bPhi-\frac{\delta E_{{\rm total},h}(\bPhi,\bU)}{\delta\bPhi},\quad
\calR_{u,h}(\bU;\bPhi)=
\calL_{u,h}\bU-
\frac{\delta E_{{\rm total},h}(\bPhi,\bU)}{\delta \bU}.
\nonumber
\end{align}
Here, $\frac{\delta E_{{\rm total},h}(\bPhi,\bU)}{\delta\bPhi}$ and $\frac{\delta E_{{\rm total},h}(\bPhi,\bU)}{\delta \bU}$ are the discrete variational derivatives of the discrete free energy,
\begin{align}
& \begin{aligned}
     \frac{\delta E_{{\rm total},h}(\bPhi,\bU)}{\delta\bPhi}
={}&
\underbrace{\Delta_h
\left(
  \kb(\bU)\odotv\bB_h(\bPhi)
\right)
-\frac1{\ve_{\phi}^2}
  \kb(\bU)\odotv W''(\bPhi)
  \odotv\bB_h(\bPhi)}_{\delta E_{{\rm bend},h}(\bPhi,\bU)/\delta\bPhi}\\
&+\underbrace{\lambda_{\rm surf}
\left(
  -\ve_{\phi}\Delta_h\bPhi
  +\frac1{\ve_{\phi}} W'(\bPhi)
\right)
+K_{{\rm vol}}\left(V_h(\bPhi)-V_0\right)\one}_{\delta E_{{\rm geom},h}(\bPhi)/\delta\bPhi}\\
&+\lambda_u
   \left[
\begin{aligned}
    & \underbrace{g'(\bPhi)\odotv\boldsymbol e_{u,h}(\bU)}_{\delta E_{{\rm loc},h}(\bPhi,\bU)/\delta\bPhi} + \underbrace{\gamma
  g'(\bPhi)\odotv\left(
    \bU-\omega\one
  \right)\odotv\bPsi_h}_{\delta E_{{\rm nl},h}(\bPhi,\bU)/\delta\bPhi}\\
  &+ \underbrace{K_{{\rm mass}} m_h(\bPhi,\bU)
  g'(\bPhi)\odotv\left(\bU-\omega\one\right)}_{\delta E_{{\rm mass},h}(\bPhi,\bU)/\delta\bPhi}
\end{aligned}\right],
\end{aligned}
\label{eqn:discrete-mu-phi-explicit} 
\end{align}
\begin{align}
& \begin{aligned}
   \frac{\delta E_{{\rm total},h}(\bPhi,\bU)}{\delta \bU}
={}&
\underbrace{\frac1{2\ve_{\phi}}
  \kb^{\prime}(\bU)
  \odotv\bB_h(\bPhi)^2}_{\delta E_{{\rm bend},h}(\bPhi,\bU)/\delta\bU}\\
  &+\lambda_u
  \left[\begin{aligned}
      &\underbrace{-\ve_u
  \Delta_{\rm S,h}\bU
+\frac{1}{\ve_u}
  g(\bPhi)\odotv W'(\bU)}_{\delta E_{{\rm loc},h}(\bPhi,\bU)/\delta\bU}\\
&+\underbrace{\gamma\,
  g(\bPhi)\odotv\bPsi_h}_{\delta E_{{\rm nl},h}(\bPhi,\bU)/\delta\bU}
+\underbrace{K_{{\rm mass}} m_h(\bPhi,\bU)\,
  g(\bPhi)}_{\delta E_{{\rm mass},h}(\bPhi,\bU)/\delta\bU}
  \end{aligned}\right].
\end{aligned}
\label{eqn:discrete-mu-u-explicit}
\end{align}

\subsection{Temporal Discretization by Stabilized Alternating ETD Schemes}
\label{sec:etd-splitting}

In this subsection, we introduce the alternating ETD1 and Strang-ETDRK2 schemes for the coupled system \eqref{eqn:discrete-linear}. Given a time interval $[0,T]$ and an integer $K>0$, we take the time step size $\tau = T/K$ and set $t_n = n\tau$ for $n=0,1,\cdots,K$. Let $\bPhi^{n},\bU^{n}$ denote the approximate solutions of $\phi,u$ at time $t_n$. The alternating ETD1 method \cite{luoETDmembrane} for the discrete stabilized coupled system \eqref{eqn:discrete-linear} is given by
\begin{align}\label{eqn:ETDRK1}
    \bPhi^{n+1} = \mathrm{ETD1}_{\phi,\tau}(\bPhi^{n};\bU^{n}), \quad
    \bU^{n+1} = \mathrm{ETD1}_{u,\tau}(\bU^{n};\bPhi^{n+1}), 
\end{align}
in which the two operators $\mathrm{ETD1}_{\phi,\tau}$ and $\mathrm{ETD1}_{u,\tau}$ are defined by
\begin{align}
\begin{aligned}
  \bPhi^{+} = \mathrm{ETD1}_{\phi,\tau}(\bPhi;\bU): \ \bPhi^+
  & =e^{-\tau\calL_{\phi,h}}\bPhi
  +\tau\varphi_1(\tau\calL_{\phi,h})
  \calR_{\phi,h}(\bPhi;\bU),\\
  \bU^+ = \mathrm{ETD1}_{u,\tau}(\bU;\bPhi): \ \bU^+
  & =e^{-\tau\calL_{u,h}}\bU
  +\tau\varphi_1(\tau\calL_{u,h})
  \calR_{u,h}(\bU;\bPhi).
\end{aligned}
\nonumber
\end{align}
Here, $\varphi_1(z)=\frac{1-e^{-z}}{z}>0$ for all $z>0$ with $\varphi_1(0)=1$. 

The standard ETDRK2 method \cite{du2021mbp}
with alternating updates between $\bPhi$ and $\bU$ is given as follows
\begin{align}\label{eqn:ETDRK2_traditional}
    \bPhi^{n+1} = \mathrm{ETDRK2}_{\phi,\tau}(\bPhi^{n};\bU^{n}), \quad
    \bU^{n+1} = \mathrm{ETDRK2}_{u,\tau}(\bU^{n};\bPhi^{n+1}), 
\end{align}
in which the two operators $\mathrm{ETDRK2}_{\phi,\tau}$ and $\mathrm{ETDRK2}_{u,\tau}$ are defined as follows
\begin{align*}
    \bPhi^+ = \mathrm{ETDRK2}_{\phi,\tau}(\bPhi;\bU): \ 
    &\widetilde{\bPhi}
  =e^{-\tau\calL_{\phi,h}}\bPhi
  +\tau\varphi_1\left(\tau\calL_{\phi,h}\right)
  \calR_{\phi,h}(\bPhi;\bU),\\
  &\bPhi^+
  =\widetilde{\bPhi}
  +\tau\varphi_2\left(\tau\calL_{\phi,h}\right)
  \left[
  \calR_{\phi,h}(\widetilde{\bPhi};\bU)
  -\calR_{\phi,h}(\bPhi;\bU)
  \right];\\
  \bU^+ = \mathrm{ETDRK2}_{u,\tau}(\bU;\bPhi): \
  & \widetilde{\bU}
   =e^{-\tau\calL_{u,h}}\bU
  +\tau\varphi_1(\tau\calL_{u,h})
  \calR_{u,h}(\bU;\bPhi),\\
  & \bU^+
   =\widetilde{\bU}
  +\tau\varphi_2(\tau\calL_{u,h})
  \left[
  \calR_{u,h}(\widetilde{\bU};\bPhi)
  -\calR_{u,h}(\bU;\bPhi)
  \right].
\end{align*}
Here, $\varphi_2(z)=\frac{e^{-z}-1+z}{z^2} >0$ for all $z>0$ with $\varphi_2(0)=1/2$. 

However, since the traditional ETDRK2 method \eqref{eqn:ETDRK2_traditional} cannot guarantee the second-order temporal accuracy for the coupled system, we propose an alternating ETDRK2 method through a Strang-type composition (Strang-ETDRK2) \cite{strang1968construction} as
\begin{align}
\begin{aligned}
    \widetilde{\bPhi}^{\,n+\frac{1}{2}}
  & =e^{-\frac{\tau}{2}\calL_{\phi,h}}\bPhi^n
  +\frac{\tau}{2}\varphi_1\left(\frac{\tau}{2}\calL_{\phi,h}\right)
  \calR_{\phi,h}(\bPhi^n;\bU^n),\\
  \bPhi^{n+\frac{1}{2}}
  & =\widetilde{\bPhi}^{\,n+\frac{1}{2}}
  +\frac{\tau}{2}\varphi_2\left(\frac{\tau}{2}\calL_{\phi,h}\right)
  \left[
  \calR_{\phi,h}(\widetilde{\bPhi}^{\,n+\frac{1}{2}};\bU^n)
  -\calR_{\phi,h}(\bPhi^n;\bU^n)
  \right];\\
  \widetilde{\bU}^{\,n+1}
  & =e^{-\tau\calL_{u,h}}\bU^n
  +\tau\varphi_1(\tau\calL_{u,h})
  \calR_{u,h}(\bU^n;\bPhi^{n+\frac{1}{2}}),\\
  \bU^{n+1}
  & =\widetilde{\bU}^{\,n+1}
  +\tau\varphi_2(\tau\calL_{u,h})
  \left[
  \calR_{u,h}(\widetilde{\bU}^{\,n+1};\bPhi^{n+\frac{1}{2}})
  -\calR_{u,h}(\bU^n;\bPhi^{n+\frac{1}{2}})
  \right]; \\
\widetilde{\bPhi}^{\,n+1}
  & =e^{-\frac{\tau}{2}\calL_{\phi,h}}\bPhi^{n+\frac{1}{2}}
  +\frac{\tau}{2}\varphi_1\left(\frac{\tau}{2}\calL_{\phi,h}\right)
  \calR_{\phi,h}(\bPhi^{n+\frac{1}{2}};\bU^{n+1}),\\
  \bPhi^{n+1}
  & =\widetilde{\bPhi}^{\,n+1}
  +\frac{\tau}{2}\varphi_2\left(\frac{\tau}{2}\calL_{\phi,h}\right)
  \left[
  \calR_{\phi,h}(\widetilde{\bPhi}^{\,n+1};\bU^{n+1})
  -\calR_{\phi,h}(\bPhi^{n+\frac{1}{2}};\bU^{n+1})
  \right].
\end{aligned}
\nonumber
\end{align}
In compact form, Strang-ETDRK2 reads
\begin{align}\label{eqn:ETDRK2_Strang}
\begin{aligned}
     & \bPhi^{n+\frac{1}{2}} = \mathrm{ETDRK2}_{\phi,\frac{\tau}{2}}(\bPhi^{n};\bU^{n}), \\
    & \bU^{n+1} = \mathrm{ETDRK2}_{u,\tau}(\bU^{n};\bPhi^{n+\frac{1}{2}}), \\
    & \bPhi^{n+1} = \mathrm{ETDRK2}_{\phi,\frac{\tau}{2}}(\bPhi^{n+\frac{1}{2}};\bU^{n+1}).
\end{aligned}
\end{align}
We briefly study the second-order temporal accuracy for the Strang-ETDRK2 scheme in the following remark.

\begin{remark}\label{rem:Strang-ETDRK2-order}
We study the one-step local truncation error of the Strang-ETDRK2 scheme \eqref{eqn:ETDRK2_Strang}. Consider the semi-discrete system
\begin{align}
    \bPhi_t=\mathcal F(\bPhi,\bU),
    \quad
    \bU_t=\mathcal G(\bPhi,\bU).
    \nonumber
\end{align}
Assume that $\mathcal F$ and $\mathcal G$ are sufficiently smooth and denote
\begin{align*} 
& \mathcal F_n =\mathcal F(\bPhi(t_n),\bU(t_n)), \quad \mathcal G_n =\mathcal G(\bPhi(t_n),\bU(t_n)), \\
& F_{\bPhi,n} = \partial_{\bPhi}\mathcal F(\bPhi(t_n),\bU(t_n)), \quad F_{\bU,n}= \partial_{\bU}\mathcal F(\bPhi(t_n),\bU(t_n)), \\
& G_{\bPhi,n} = \partial_{\bPhi}\mathcal G(\bPhi(t_n),\bU(t_n)), \quad G_{\bU,n}= \partial_{\bU}\mathcal G(\bPhi(t_n),\bU(t_n)).
\end{align*} 
The Taylor expansions of the exact solution at $t_n$ are
\begin{align}
\begin{aligned}
    \bPhi(t_{n+1})
={}&
\bPhi(t_n)+\tau\mathcal F_n
+\frac{\tau^2}{2}
\left(
\mathcal F_{\bPhi,n}\mathcal F_n
+\mathcal F_{\bU,n}\mathcal G_n
\right)
+O(\tau^3),\\
\bU(t_{n+1})
={}&
\bU(t_n)+\tau\mathcal G_n
+\frac{\tau^2}{2}
\left(
\mathcal G_{\bPhi,n}\mathcal F_n
+\mathcal G_{\bU,n}\mathcal G_n
\right)
+O(\tau^3).
\end{aligned}
\nonumber
\end{align}
By applying the Strang-ETDRK2 scheme \eqref{eqn:ETDRK2_Strang} to the exact solution $(\bPhi(t_n),\bU(t_n))$, we obtain the numerical solution $(\bPhi^{n+1},\bU^{n+1})$ to approximate the exact solution $(\bPhi(t_{n+1}),\bU(t_{n+1}))$.

For $\bPhi^{\,n+\frac12}=\mathrm{ETDRK2}_{\phi,\frac{\tau}{2}}(\bPhi(t_n);\bU(t_n))$, we get
\begin{align}
    \bPhi^{\,n+\frac12}
    =
    \bPhi(t_n)+\frac{\tau}{2}\mathcal F_n
    +\frac{\tau^2}{8}
    \mathcal F_{\bPhi,n}\mathcal F_n
    +O(\tau^3), \nonumber
\end{align}
that leads to
\begin{align}
\begin{aligned}
   & \mathcal G(\bPhi^{n+\frac12},\bU(t_n)) ={} \mathcal G_n +\frac{\tau}{2} \mathcal G_{\bPhi,n}\mathcal F_n +O(\tau^2),\\
& \partial_{\bU} \mathcal G(\bPhi^{\,n+\frac12},\bU(t_n))=\mathcal G_{\bU,n}+O(\tau). 
\end{aligned}
\label{eq:G-stage-expansion}
\end{align}
Using \eqref{eq:G-stage-expansion}, $\bU^{\,n+1}=\mathrm{ETDRK2}_{u,\tau}(\bU(t_n);\bPhi^{\,n+\frac12})$ satisfies
\begin{align}
\begin{aligned}
    \bU^{n+1} ={}& \bU(t_n) +\tau\mathcal G(\bPhi^{n+\frac12},\bU(t_n)) + \frac{\tau^2}{2} \partial_{\bU}\mathcal G(\bPhi^{n+\frac12},\bU(t_n)) \mathcal G(\bPhi^{n+\frac12},\bU(t_n)) +O(\tau^3) \\ 
={}& \bU(t_n)+\tau\mathcal G_n +\frac{\tau^2}{2} \left( \mathcal G_{\bPhi,n}\mathcal F_n + \mathcal G_{\bU,n}\mathcal G_n \right) +O(\tau^3), 
\end{aligned}
\nonumber
\end{align}
which yields
\begin{align}
&\mathcal F(\bPhi^{n+\frac12},\bU^{n+1}) ={} \mathcal F_n +\frac{\tau}{2} \mathcal F_{\bPhi,n}\mathcal F_n +\tau \mathcal F_{\bU,n}\mathcal G_n +O(\tau^2),\nonumber\\
&\partial_{\bPhi}\mathcal F(\bPhi^{\,n+\frac12},\bU^{\,n+1})
    ={}\mathcal F_{\bPhi,n}+O(\tau).
\nonumber
\end{align}
At last, $\bPhi^{\,n+1}=\mathrm{ETDRK2}_{\phi,\frac{\tau}{2}}(\bPhi^{\,n+\frac12};\bU^{\,n+1})$ gives
\begin{align} 
\begin{aligned}
    \bPhi^{n+1} ={}& \bPhi^{n+\frac12} +\frac{\tau}{2} \mathcal F(\bPhi^{n+\frac12},\bU^{n+1}) \\ 
&+ \frac{\tau^2}{8} \partial_{\bPhi}\mathcal F(\bPhi^{n+\frac12},\bU^{n+1}) \mathcal F(\bPhi^{n+\frac12},\bU^{n+1}) +O(\tau^3) \\ 
={}& \bPhi(t_n)+\tau\mathcal F_n +\frac{\tau^2}{2} \left( \mathcal F_{\bPhi,n}\mathcal F_n + \mathcal F_{\bU,n}\mathcal G_n \right) +O(\tau^3).
\end{aligned}
\nonumber
\end{align}
Consequently, we have the local truncation error of order $O(\tau^3)$
\begin{align*}
    \|\bPhi^{n+1}-\bPhi(t_{n+1})\|_{2,h}+\|\bU^{n+1}-\bU(t_{n+1})\|_{2,h} = O(\tau^3). 
\end{align*}
For a fixed spatial grid, standard stability assumptions then give second-order temporal convergence.
\end{remark}

\subsection{Energy Dissipation of the Stabilized Alternating ETD Schemes}
\label{sec:energy-stability-reconstructed} 

We next establish the discrete energy dissipation law of the alternating ETD1 and Strang-ETDRK2 schemes. The proofs requires the following assumptions to be applied successively to each membrane and protein updates.

\begin{assumption} \label{ass:actual-stage-bounds}
Let $d\leq3$. For fixed interface parameters $\ve_\phi$ and $\ve_u$, there exist constants
\begin{align}
C_{\phi}>0,
\quad
C_{u}>0,
\nonumber
\end{align}
independent of mesh size $h$ and time step size $\tau$, such that any $\bPhi\in\{\bPhi^{n}\}_{n\geq0}$ and $\bU \in \{\bU^{n}\}_{n\geq0}$ generated by scheme \eqref{eqn:ETDRK1}, any $\bPhi\in\{\bPhi^{n},\widetilde{\bPhi}^{\,n+\frac{1}{2}}, \bPhi^{n+\frac{1}{2}}, \widetilde{\bPhi}^{\,n+1}\}_{n\geq0}$ and $\bU \in \{\bU^{n}, \widetilde{\bU}^{\,n+1}\}_{n\geq0}$ generated by scheme \eqref{eqn:ETDRK2_Strang} satisfy the following discrete regularity assumptions
\begin{align}
\|\bPhi\|_{H_h^4}
\leq C_{\phi},
\quad
\|\bU\|_{H_h^3}
\leq C_{u}.
\nonumber
\end{align}
Here, $H_h^m$ denotes the standard discrete Sobolev norm on the periodic
grid \cite{Strikwerda2004,GustafssonKreissOliger2013}.
\end{assumption}

\begin{remark} \label{rem:discrete-regularity}
On a periodic grid in dimensions $d\leq3$, the discrete regularity assumptions in Assumption~\ref{ass:actual-stage-bounds} provide a sufficient condition
for the following three bounds required in the proof of the discrete energy dissipation 
\begin{align}
  & \norm{\bU}_{\infty,h}\leq R_u, \ \norm{\bB_h(\bPhi)}_{\infty,h}\leq R_B,\ \norm{\boldsymbol e_{u,h}(\bU)}_{\infty,h}\leq R_e,
\nonumber
\end{align}
where $R_u,R_B,R_e>0$ are independent of mesh size $h$ and time step size $\tau$ for fixed interface parameters $\ve_\phi$ and $\ve_u$. These bounds are obtained from the discrete Sobolev embedding inequalities.

Although these uniform bounds are not proved in the current work, systematic numerical experiments for various spatial mesh sizes $h$ and temporal step sizes $\tau$ suggest that such bounds exist. In the future work, we will further investigate whether the regularity assumptions in Assumption~\ref{ass:actual-stage-bounds} can be established from sufficiently smooth initial data.
\end{remark}

We recall the following constants
\begin{align*}
    &  L_{\kappa''} = \|\kappa_{{\rm bend}}''\|_{L^{\infty}}, \quad \kappa_{\rm ref}=\max\{\kappa_{-},\kappa_{+}\}, \\ & L_{W''} = \|W''\|_{L^{\infty}}, \quad
     L_g = \|g\|_{L^{\infty}},  \quad L_{g'} = \|g'\|_{L^{\infty}},
\end{align*}
and denote by $L_{g''}$ the Lipschitz constant of $g'$ and by $L_{W'''}$ the Lipschitz constant of $W''$. It follows that, for all $a,b\in\mathbb{R}$,
\begin{align}
  & |W'(a)-W'(b)-W''(b)(a-b)|
  \leq
  \frac{L_{W'''}}{2}|a-b|^2, 
  \label{eq:global-W-lip} \\
  & |g'(a)-g'(b)|
  \leq L_{g''}|a-b|, \quad
  |g(a)-g(b)-g'(b)(a-b)|
  \leq
  \frac{L_{g''}}{2}|a-b|^2.
\label{eq:global-g-lip}
\end{align}
Furthermore, for any $f\in \calM_{h}$, following an argument similar to that in \cite{luoETDmembrane,xu2020}, we have
\begin{align}
  \iph{(-\Delta_{h})^{-1} f}{f}
  \leq C_{\Delta,2} \norm{f}_{2,h}^2, \quad \norm{(-\Delta_{h})^{-1} f}_{\infty,h}\leq 2C_{\Delta,\infty}\norm{f}_{\infty,h}.
\label{eq:inverse-Linf-bound}
\end{align}
where $C_{\Delta,2}, C_{\Delta,\infty}$ depend on $\Omega$ but are independent of the mesh size.

Using the bounds and constants introduced above, we first establish the estimate of the energy difference for $\phi$-substep in Lemma~\ref{prop:direct-phi-increment}, which will be used in the proofs of Theorems~\ref{thm:ETD1-direct-energy} and \ref{thm:ETDRK2-direct-energy}.

\begin{lemma}\label{prop:direct-phi-increment}
Fixed $\bU$ and let $\bPhi,\bPhi^+\in \calM_{h}$ satisfy Assumption~\ref{ass:actual-stage-bounds}. If the stabilization parameters $\alpha_1,\alpha_2$ satisfy
\begin{align}
  \alpha_1&\geq\frac12L_{W''}, \quad
  \alpha_1\alpha_2\geq
  L_{W''}^2
  +\frac{\ve_{\phi}}{2}
  R_BL_{W'''}
  +\frac{\ve_{\phi}^3}{2\kappa_{\rm ref}}C_{\phi,{\rm rest}},
\label{eq:A1A2-direct-bound}
\end{align}
where 
\begin{align}
& C_{\phi,{\rm rest}}
={}K_{{\rm vol}}\abs\Omega
+\lambda_u(L_{g''}R_e
+C_{{\rm nl},\phi}
+C_{{\rm mass},\phi}), \nonumber\\
& C_{{\rm nl},\phi}
={}\gamma(R_u+\omega)^2\left[
2L_{g''}C_{\Delta,\infty}L_g
+C_{\Delta,2}L_{g'}^2
\right],\nonumber\\
& C_{{\rm mass},\phi}
={} K_{{\rm mass}}(R_u+\omega)^2\abs\Omega
\left(L_gL_{g''}+L_{g'}^2\right).
\nonumber
\end{align}
Then, 
\begin{align}
& E_{{\rm total},h}(\bPhi^+,\bU)- E_{{\rm total},h}(\bPhi,\bU)\leq
\iph{\frac{\delta  E_{{\rm total},h}(\bPhi,\bU)}{\delta\bPhi}
+\calL_{\phi,h}(\bPhi^+-\bPhi)}
{\bPhi^+-\bPhi}.
\nonumber
\end{align}
\end{lemma}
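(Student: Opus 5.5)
Set $\boldsymbol\delta=\bPhi^+-\bPhi$. The target inequality is a one-sided Taylor bound,
\begin{align*}
E_{{\rm total},h}(\bPhi^+,\bU)-E_{{\rm total},h}(\bPhi,\bU)-\iph{\frac{\delta E_{{\rm total},h}(\bPhi,\bU)}{\delta\bPhi}}{\boldsymbol\delta}\leq\iph{\calL_{\phi,h}\boldsymbol\delta}{\boldsymbol\delta}.
\end{align*}
I would split the left side into the contributions of $E_{{\rm bend},h}$, $E_{{\rm geom},h}$, $E_{{\rm loc},h}$, $E_{{\rm nl},h}$ and $E_{{\rm mass},h}$. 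Each one is a second-order Taylor remainder in $\boldsymbol\delta$. The idea is to bound every remainder by quadratic forms of the type $\|\Delta_h\boldsymbol\delta\|_{2,h}^2$, $\iph{-\Delta_h\boldsymbol\delta}{\boldsymbol\delta}$ and $\|\boldsymbol\delta\|_{2,h}^2$. These are then absorbed by $\calL_{\phi,h}$, which by \eqref{eq:Sphi-direct} equals
\begin{align*}
\kappa_{\rm ref}\ve_\phi\|\Delta_h\boldsymbol\delta\|^2+\Big(\tfrac{\kappa_{\rm ref}(\alpha_1+\alpha_2)}{\ve_\phi}+\lambda_{\rm surf}\ve_\phi\Big)\iph{-\Delta_h\boldsymbol\delta}{\boldsymbol\delta}+\Big(\tfrac{\kappa_{\rm ref}\alpha_1\alpha_2}{\ve_\phi^3}+\tfrac{\lambda_{\rm surf}\alpha_1}{\ve_\phi}\Big)\|\boldsymbol\delta\|^2
\end{align*}
when tested against $\boldsymbol\delta$.

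\textbf{Easy terms.}
\begin{itemize}
\item \emph{Surface-tension gradient part.} It is quadratic, so its remainder is exactly $\frac{\lambda_{\rm surf}\ve_\phi}{2}\iph{-\Delta_h\boldsymbol\delta}{\boldsymbol\delta}$.
\item \emph{Potential $W$.} By \eqref{eq:global-W-lip}, or simply $W''\le L_{W''}$, the remainder is at most $\frac{\lambda_{\rm surf}L_{W''}}{2\ve_\phi}\|\boldsymbol\delta\|^2$. This is absorbed by $\frac{\lambda_{\rm surf}\alpha_1}{\ve_\phi}\|\boldsymbol\delta\|^2$ because $\alpha_1\ge L_{W''}/2$.
\item \emph{Volume penalty.} The remainder is $\frac{K_{\rm vol}}2\iph{\boldsymbol\delta}{\one}^2\le\frac{K_{\rm vol}|\Omega|}{2}\|\boldsymbol\delta\|^2$ by Cauchy--Schwarz.
\item \emph{Local term.} $E_{{\rm loc},h}$ depends on $\bPhi$ only through $g(\bPhi)$ multiplying $\boldsymbol e_{u,h}(\bU)$; note that $\frac{\ve_u}{2}\iph{-\Delta_{\rm S,h}\bU}{\bU}+\frac{1}{\ve_u}\iph{g W(\bU)}{\one}$ equals $\iph{g(\bPhi)\odotv\boldsymbol e_{u,h}(\bU)}{\one}$ after re-indexing the averaged coefficient. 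By \eqref{eq:global-g-lip} and $\|\boldsymbol e_{u,h}\|_\infty\le R_e$, its remainder is at most $\frac{L_{g''}R_e}{2}\|\boldsymbol\delta\|^2$.
\item \emph{Mass term.} Write $m^+=m+a$, with $a=\iph{(g(\bPhi^+)-g(\bPhi))(\bU-\omega\one)}{\one}$. The remainder is
\[
\tfrac{K_{\rm mass}}{2}a^2+K_{\rm mass}\,m\,\big(a-\iph{g'(\bPhi)\boldsymbol\delta(\bU-\omega)}{\one}\big).
\]
Using $|a|\le L_{g'}(R_u+\omega)|\Omega|^{1/2}\|\boldsymbol\delta\|$, $|m|\le L_g(R_u+\omega)|\Omega|$ and \eqref{eq:global-g-lip}, this is bounded by $\frac{C_{{\rm mass},\phi}}{2}\|\boldsymbol\delta\|^2$.
\item \emph{Nonlocal term.} Let $f=g(\bPhi)(\bU-\omega)$ and $f^+=f+r$. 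The remainder is
\[
\tfrac\gamma2\iph{r}{(-\Delta_h)^{-1}r}+\gamma\iph{r-g'(\bPhi)\boldsymbol\delta(\bU-\omega)}{\bPsi_h}.
\]
Bound the first piece with the $C_{\Delta,2}$ estimate of \eqref{eq:inverse-Linf-bound}, giving $\frac{\gamma}{2}C_{\Delta,2}L_{g'}^2(R_u+\omega)^2\|\boldsymbol\delta\|^2$. Bound the second with $\|\bPsi_h\|_\infty\le 2C_{\Delta,\infty}L_g(R_u+\omega)$ together with the $L_{g''}$ Taylor bound. Together these give $\frac{C_{{\rm nl},\phi}}{2}\|\boldsymbol\delta\|^2$.
\end{itemize}
All these ``rest'' contributions sum to at most $\frac{C_{\phi,{\rm rest}}}{2}\|\boldsymbol\delta\|^2$. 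The last part of hypothesis \eqref{eq:A1A2-direct-bound} is designed so that this is dominated by $\frac{\kappa_{\rm ref}}{\ve_\phi^3}\cdot\frac{\ve_\phi^3}{2\kappa_{\rm ref}}C_{\phi,{\rm rest}}\|\boldsymbol\delta\|^2$.

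\textbf{Main obstacle: the bending term.} Let $\bB=\bB_h(\bPhi)$ and $\bB^+=\bB+\boldsymbol\beta$, where
\[
\boldsymbol\beta=\ve_\phi\Delta_h\boldsymbol\delta-\tfrac1{\ve_\phi}\big(W'(\bPhi^+)-W'(\bPhi)\big).
\]
Then
\[
E_{\rm bend}(\bPhi^+)-E_{\rm bend}(\bPhi)=\tfrac1{2\ve_\phi}\iph{\kb(\bU)\boldsymbol\beta^2}{\one}+\tfrac1{\ve_\phi}\iph{\kb\bB}{\boldsymbol\beta}.
\]
The linear part of $\boldsymbol\beta$, namely $\ve_\phi\Delta_h\boldsymbol\delta-\frac1{\ve_\phi}W''(\bPhi)\boldsymbol\delta$, reproduces the first variation in \eqref{eqn:discrete-mu-phi-explicit}. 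The leftover is $-\frac{1}{\ve_\phi^2}\iph{\kb\bB}{W'(\bPhi^+)-W'(\bPhi)-W''(\bPhi)\boldsymbol\delta}$, which is bounded by $\frac{\kappa_{\rm ref}R_BL_{W'''}}{2\ve_\phi^2}\|\boldsymbol\delta\|^2$ using \eqref{eq:global-W-lip} and $\|\bB\|_\infty\le R_B$.

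For the quadratic piece use $\kb\le\kappa_{\rm ref}$ and $(x-y)^2\le$ an expansion in which the cross term is controlled. Concretely,
\[
\tfrac{\kappa_{\rm ref}}{2\ve_\phi}\|\boldsymbol\beta\|^2\le\tfrac{\kappa_{\rm ref}}{2\ve_\phi}\Big(\ve_\phi^2\|\Delta_h\boldsymbol\delta\|^2+2\iph{-\Delta_h\boldsymbol\delta}{W'(\bPhi^+)-W'(\bPhi)}+\tfrac{L_{W''}^2}{\ve_\phi^2}\|\boldsymbol\delta\|^2\Big).
\]
The delicate step is the cross term. I would bound it by Young's inequality, $2\iph{-\Delta_h\boldsymbol\delta}{w}\le\ve_\phi^2\|\Delta_h\boldsymbol\delta\|^2+\ve_\phi^{-2}\|w\|^2$ with $\|w\|\le L_{W''}\|\boldsymbol\delta\|$. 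This yields at most
\[
\kappa_{\rm ref}\ve_\phi\|\Delta_h\boldsymbol\delta\|^2+\frac{\kappa_{\rm ref}L_{W''}^2}{\ve_\phi^3}\|\boldsymbol\delta\|^2.
\]
The first term matches the biharmonic part of $\calL_{\phi,h}$ exactly. The second is covered by the $L_{W''}^2$ part of $\alpha_1\alpha_2$, while the $R_BL_{W'''}$ part covers the leftover from the linear expansion. The nonnegative $\iph{-\Delta_h\boldsymbol\delta}{\boldsymbol\delta}$ terms in $\calL_{\phi,h}$ are not needed and may be dropped.

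Summing all the bounds and comparing coefficients against \eqref{eq:A1A2-direct-bound} then gives the claim. If the constants fail to match exactly, I would instead split the cross term using $\alpha_1,\alpha_2$ in the Young weights.
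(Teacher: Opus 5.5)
Your proposal is correct and follows essentially the same route as the paper: term-by-term second-order remainders, the bending term handled by adding and subtracting $W''(\bPhi)\odotv(\bPhi^+-\bPhi)$, and the quadratic piece bounded by your Young step, which is equivalent to the paper's $(a+b)^2\leq 2a^2+2b^2$. All constants then match \eqref{eq:A1A2-direct-bound} exactly, so no reweighting is needed. One small correction: you cannot drop all of the $\iph{-\Delta_h\boldsymbol\delta}{\boldsymbol\delta}$ coefficient of $\calL_{\phi,h}$, because its $\lambda_{\rm surf}\ve_\phi$ part is what absorbs the surface-tension remainder $\frac{\lambda_{\rm surf}\ve_\phi}{2}\iph{-\Delta_h\boldsymbol\delta}{\boldsymbol\delta}$.
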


\begin{proof}
We fix $\bU$ and let $(\bPhi,\bPhi^+)$ satisfy Assumption~\ref{ass:actual-stage-bounds}. Decompose the energy difference as
\begin{align}
&E_{{\rm total},h}(\bPhi^+,\bU)-E_{{\rm total},h}(\bPhi,\bU)
\nonumber\\
={}&
\underbrace{
E_{{\rm bend},h}(\bPhi^+,\bU)
-E_{{\rm bend},h}(\bPhi,\bU)
}_{\mathbf I_\phi}
+
\underbrace{
E_{{\rm geom},h}(\bPhi^+)-E_{{\rm geom},h}(\bPhi)
}_{\mathbf{II}_\phi}
\nonumber\\
&+\lambda_u\left[
\begin{aligned}
  &  \underbrace{
E_{{\rm loc},h}(\bPhi^+,\bU)
-E_{{\rm loc},h}(\bPhi,\bU)
}_{\mathbf{III}_\phi}
+
\underbrace{
E_{{\rm nl},h}(\bPhi^+,\bU)
-E_{{\rm nl},h}(\bPhi,\bU)
}_{\mathbf{IV}_\phi}
\nonumber\\
&+
\underbrace{
E_{{\rm mass},h}(\bPhi^+,\bU)
-E_{{\rm mass},h}(\bPhi,\bU)
}_{\mathbf V_\phi}
\end{aligned}\right].
\nonumber
\end{align}

For the bending energy, we have
\begin{align}
\begin{aligned}
\mathbf I_\phi
={}&\frac1{\ve_{\phi}}
\iph{\kb(\bU)\odotv\bB_h(\bPhi)}{\delta\bB}
+\frac1{2\ve_{\phi}}
\iph{\kb(\bU)\odotv(\delta\bB)^2}{\one},
\end{aligned}
\nonumber
\end{align}
where 
\begin{align}
  \delta\bB = \bB_h(\bPhi^+) - \bB_h(\bPhi)
  =\ve_{\phi}\Delta_h(\bPhi^+-\bPhi)
  -\frac1{\ve_{\phi}}\left[W'(\bPhi^+)-W'(\bPhi)\right].
\nonumber
\end{align}
Adding and subtracting
$W''(\bPhi)\odotv(\bPhi^+-\bPhi)$, we obtain
\begin{align}
\begin{aligned}
\mathbf I_\phi
={}&\iph{\frac{\delta E_{{\rm bend},h}(\bPhi,\bU)}{\delta \bPhi}}{\bPhi^+-\bPhi}
+\frac1{2\ve_{\phi}}
\iph{\kb(\bU)\odotv(\delta\bB)^2}{\one}\\
&-\frac1{\ve_{\phi}^2}
\iph{\kb(\bU)\odotv\bB_h(\bPhi)\odotv
\left[W'(\bPhi^+)-W'(\bPhi)-W''(\bPhi)\odotv(\bPhi^+-\bPhi)\right]}{\one}.
\end{aligned}
\nonumber
\end{align}
Using the Lipschitz condition \eqref{eq:global-W-lip} and $(a+b)^2\leq2a^2+2b^2$,
\begin{align}
\begin{aligned}
    \mathbf I_\phi
\leq{}&\iph{\frac{\delta E_{{\rm bend},h}(\bPhi,\bU)}{\delta \bPhi}}{\bPhi^+-\bPhi}
+\kappa_{\max}\ve_{\phi}\norm{-\Delta_h(\bPhi^+-\bPhi)}_{2,h}^2\\
&+\left[
\frac{\kappa_{\max}L_{W''}^2}{\ve_{\phi}^3}
+\frac{\kappa_{\max}R_BL_{W'''}}{2\ve_{\phi}^2}
\right]\norm{\bPhi^+-\bPhi}_{2,h}^2,
\end{aligned}
\label{eq:bending-difference-bound}
\end{align}
where $\kappa_{\max} = \max\{\kappa_{-},\kappa_{+}\}$.
For the surface and volume terms,
\begin{align}
\begin{aligned}
\mathbf{II}_\phi
\leq{}&
\iph{
\frac{\delta E_{{\rm geom},h}(\bPhi)}{\delta \bPhi}
}{\bPhi^+-\bPhi}
+
\frac{\lambda_{\rm surf}\ve_\phi}{2}
\iph{-\Delta_h(\bPhi^+-\bPhi)}{\bPhi^+-\bPhi}\\
&+
\left[
\frac{\lambda_{\rm surf}L_{W''}}{2\ve_\phi}
+\frac{K_{{\rm vol}}\abs{\Omega}}2
\right]
\norm{\bPhi^+-\bPhi}_{2,h}^2.
\end{aligned}
\label{eq:surface-difference-bound}
\end{align}
Using the Lipschitz condition \eqref{eq:global-g-lip} and
\begin{align}
  E_{{\rm loc},h}(\bPhi,\bU)
  =\iph{g(\bPhi)\odotv
  \boldsymbol e_{u,h}(\bU)}{\one},
\nonumber
\end{align}
the short-range term satisfies
\begin{align}
\mathbf{III}_\phi
\leq
\iph{\frac{\delta E_{{\rm loc},h}(\bPhi,\bU)}{\delta \bPhi}}
{\bPhi^+-\bPhi}
+
\frac{L_{g''}R_e}{2}
\norm{\bPhi^+-\bPhi}_{2,h}^2.
\label{eq:local-phi-difference-bound}
\end{align}
For the long-range term, we have
\begin{align}
\begin{aligned}
\mathbf{IV}_\phi
={}&\gamma\iph{\bPsi_h}
{\delta g\odotv(\bU-\omega\one)}
+\frac\gamma2\iph{(-\Delta_h)^{-1}\left[\delta g\odotv\left(\bU-\omega\one\right)  \right]}{\delta g\odotv\left(\bU-\omega\one\right)}.
\end{aligned}
\nonumber
\end{align}
where $\delta g=g(\bPhi^+)-g(\bPhi)$. 
By \eqref{eq:global-g-lip} and \eqref{eq:inverse-Linf-bound}, we obtain 
\begin{align}
\begin{aligned}
\mathbf{IV}_\phi
\leq{}\iph{\frac{\delta E_{{\rm nl},h}(\bPhi,\bU)}{\delta \bPhi}}{\bPhi^+-\bPhi} + \frac{C_{{\rm nl},\phi}}{2}\norm{\bPhi^+-\bPhi}_{2,h}^2,
\end{aligned}
\label{eq:nonlocal-phi-difference-bound}
\end{align}
where $C_{{\rm nl},\phi}$ is given by
\begin{align*}
 C_{{\rm nl},\phi}
={}&\gamma(R_u+\omega)^2\left[
2L_{g''}C_{\Delta,\infty}L_g
+C_{\Delta,2}L_{g'}^2
\right].
\end{align*}
Finally, since
$\norm{\bU-\omega\one}_{\infty,h}\leq R_u+\omega$ and
\begin{align}
&m_h(\bPhi^+,\bU)-m_h(\bPhi,\bU)
=\iph{\delta g\odotv(\bU-\omega\one)}{\one},
\nonumber
\end{align}
the soft penalty term gives
\begin{align}
\begin{aligned}
\mathbf V_\phi
\leq{}&\iph{\frac{\delta E_{{\rm mass},h}(\bPhi,\bU)}{\delta \bPhi}}{\bPhi^+-\bPhi}
+\frac12K_{{\rm mass}}(R_u+\omega)^2\abs\Omega
  \left(L_gL_{g''}+L_{g'}^2\right)\norm{\bPhi^+-\bPhi}_{2,h}^2 \\
={}&\iph{\frac{\delta E_{{\rm mass},h}(\bPhi,\bU)}{\delta \bPhi}}{\bPhi^+-\bPhi}
+\frac{C_{{\rm mass},\phi}}2\norm{\bPhi^+-\bPhi}_{2,h}^2
\end{aligned}
\label{eq:mass-phi-difference-bound}
\end{align}
Then, denoting
\begin{align}
C_{\phi,{\rm rest}}
={}&K_{{\rm vol}}\abs\Omega
+\lambda_u(L_{g''}R_e
+C_{{\rm nl},\phi}
+C_{{\rm mass},\phi}),
\nonumber
\end{align}
and adding \eqref{eq:bending-difference-bound}, \eqref{eq:surface-difference-bound}, \eqref{eq:local-phi-difference-bound}, \eqref{eq:nonlocal-phi-difference-bound}, and \eqref{eq:mass-phi-difference-bound}, we obtain
\begin{align}
\begin{aligned}
&E_{{\rm total},h}(\bPhi^+,\bU)-E_{{\rm total},h}(\bPhi,\bU)\\
\leq{}&\iph{\frac{\delta E_{{\rm total},h}(\bPhi,\bU)}{\delta\bPhi}}{\bPhi^+-\bPhi}
+\kappa_{\max}\ve_{\phi}\norm{-\Delta_h(\bPhi^+-\bPhi)}_{2,h}^2
+\frac{\lambda_{\rm surf}\ve_{\phi}}{2}
\iph{-\Delta_h(\bPhi^+-\bPhi)}{\bPhi^+-\bPhi}\\
&+\Bigg[
\frac{\kappa_{\max}L_{W''}^2}{\ve_{\phi}^3}
+\frac{\kappa_{\max}R_BL_{W'''}}{2\ve_{\phi}^2}
+\frac{\lambda_{\rm surf}L_{W''}}{2\ve_{\phi}}
+\frac{C_{\phi,{\rm rest}}}{2}
\Bigg]\norm{\bPhi^+-\bPhi}_{2,h}^2.
\end{aligned}
\nonumber
\end{align}
Also, since
\begin{align}
  \iph{(-\Delta_h)^2(\bPhi^+-\bPhi)}{\bPhi^+-\bPhi}
  =\norm{-\Delta_h(\bPhi^+-\bPhi)}_{2,h}^2,\quad
  \iph{-\Delta_h(\bPhi^+-\bPhi)}{\bPhi^+-\bPhi}\geq0,
\nonumber
\end{align}
the definition \eqref{eq:Sphi-direct} gives
\begin{align}
\begin{aligned}
    \iph{\calL_{\phi,h}(\bPhi^+-\bPhi)}{\bPhi^+-\bPhi}
={}&
\kappa_{\rm ref}\ve_{\phi}
\norm{-\Delta_h(\bPhi^+-\bPhi)}_{2,h}^2
\\
&+
\left[
  \frac{\kappa_{\rm ref}(\alpha_1+\alpha_2)}{\ve_{\phi}}
  +\lambda_{\rm surf}\ve_{\phi}
\right]
\iph{-\Delta_h(\bPhi^+-\bPhi)}{\bPhi^+-\bPhi}\\
&+
\left[
  \frac{\kappa_{\rm ref}\alpha_1\alpha_2}{\ve_{\phi}^3}
  +\frac{\lambda_{\rm surf}\alpha_1}{\ve_{\phi}}
\right]
\norm{\bPhi^+-\bPhi}_{2,h}^2.
\end{aligned}
\label{eq:Sphi-quadratic-form}
\end{align}
Then, denoting
\begin{align}
\begin{aligned}
    \mathcal Q_{\phi,h}(\bPhi^+-\bPhi)
={}&
\kappa_{\max}\ve_{\phi}
\norm{-\Delta_h(\bPhi^+-\bPhi)}_{2,h}^2
+\frac{\lambda_{\rm surf}\ve_{\phi}}{2}
\iph{-\Delta_h(\bPhi^+-\bPhi)}{\bPhi^+-\bPhi}\\
&+
\Bigg[
  \frac{\kappa_{\max}L_{W''}^2}{\ve_{\phi}^3}
  +\frac{\kappa_{\max}R_BL_{W'''}}{2\ve_{\phi}^2}
  +\frac{\lambda_{\rm surf}L_{W''}}{2\ve_{\phi}}
  +\frac{C_{\phi,{\rm rest}}}{2}
\Bigg]
\norm{\bPhi^+-\bPhi}_{2,h}^2.
\end{aligned}
\label{eq:Qphi-direct}
\end{align}
Since $\alpha_1, \alpha_2$ satisfy \eqref{eq:A1A2-direct-bound}, subtracting \eqref{eq:Qphi-direct} from
\eqref{eq:Sphi-quadratic-form} yields
\begin{align}
\begin{aligned}
    &\iph{\calL_{\phi,h}(\bPhi^+-\bPhi)}{\bPhi^+-\bPhi}
-\mathcal Q_{\phi,h}(\bPhi^+-\bPhi)\\
={}&
(\kappa_{\rm ref}-\kappa_{\max})\ve_{\phi}
\norm{-\Delta_h(\bPhi^+-\bPhi)}_{2,h}^2+
\left[
  \frac{\kappa_{\rm ref}(\alpha_1+\alpha_2)}{\ve_{\phi}}
  +\frac{\lambda_{\rm surf}\ve_{\phi}}{2}
\right]
\iph{-\Delta_h(\bPhi^+-\bPhi)}{\bPhi^+-\bPhi}\\
&+
\frac{\lambda_{\rm surf}}{\ve_{\phi}}
\left(
  \alpha_1-\frac12L_{W''}
\right)
\norm{\bPhi^+-\bPhi}_{2,h}^2\\
&+
\Bigg[
  \frac{\kappa_{\rm ref}\alpha_1\alpha_2}{\ve_{\phi}^3}
  -\frac{\kappa_{\max}L_{W''}^2}{\ve_{\phi}^3}
  -\frac{\kappa_{\max}R_BL_{W'''}}{2\ve_{\phi}^2}
  -\frac{C_{\phi,{\rm rest}}}{2}
\Bigg]
\norm{\bPhi^+-\bPhi}_{2,h}^2 \geq 0. 
\end{aligned}
\nonumber
\end{align}
Therefore, we conclude that 
\begin{align}
  \mathcal Q_{\phi,h}(\bPhi^+-\bPhi)
  \leq
  \iph{\calL_{\phi,h}(\bPhi^+-\bPhi)}{\bPhi^+-\bPhi},
\nonumber
\end{align}
and thus
\begin{align}
\begin{aligned}
    &E_{{\rm total},h}(\bPhi^+,\bU)-E_{{\rm total},h}(\bPhi,\bU)
\leq{}
\iph{
  \frac{\delta E_{{\rm total},h}(\bPhi,\bU)}{\delta\bPhi}
  +\calL_{\phi,h}(\bPhi^+-\bPhi)
}{\bPhi^+-\bPhi}.
\end{aligned}
\nonumber
\end{align}
\end{proof}

Similarly, we can also establish the estimate of the energy difference for $u$-substep in Lemma~\ref{prop:direct-u-increment}.

\begin{lemma}\label{prop:direct-u-increment}
Fixed $\bPhi$ and let $\bU,\bU^+\in\calM_{h}$ satisfy Assumption~\ref{ass:actual-stage-bounds}. If the stabilization parameters $\beta_1,\beta_2$ satisfy
\begin{align}
  \beta_1\geq\frac12L_g, \quad
  \beta_2\geq\frac{C_{u,0}}{2\lambda_u},
\label{eq:B1B2-direct-bound}
\end{align}
where 
\begin{align}
C_{u,0}
={}&\frac{L_{\kappa''}R_B^2}{2\ve_{\phi}}
+\lambda_u\left[\frac{L_gL_{W''}}{\ve_u}
+C_{\Delta,2}\gamma L_g^2
+K_{{\rm mass}}L_g^2\abs\Omega\right].
\label{eq:Cu0-explicit}
\end{align}
Then,
\begin{align}
&E_{{\rm total},h}(\bPhi,\bU^+)-E_{{\rm total},h}(\bPhi,\bU)\leq
\iph{\frac{\delta E_{{\rm total},h}(\bPhi,\bU)}{\delta \bU}
+\calL_{u,h}(\bU^+-\bU)}
{\bU^+-\bU}.
\nonumber
\end{align}
\end{lemma}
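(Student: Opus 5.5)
The plan mirrors the proof of Lemma~\ref{prop:direct-phi-increment}. With $\bPhi$ fixed and $\delta\bU=\bU^+-\bU$, I split the energy difference into five pieces: bending, local, nonlocal, mass, and the geometric part. The geometric part vanishes because it does not depend on $\bU$. For each remaining piece I will show it equals the first variation paired with $\delta\bU$, plus a quadratic remainder controlled by $\norm{\delta\bU}_{2,h}^2$ or by $\iph{-\Delta_h\delta\bU}{\delta\bU}$.

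The pieces are handled as follows.

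\textbf{Bending term.} A second-order Taylor expansion of $\kb$ gives
\begin{align*}
\frac1{2\ve_\phi}\iph{\left[\kb(\bU^+)-\kb(\bU)\right]\odotv\bB_h(\bPhi)^2}{\one}
\leq \iph{\frac{\delta E_{{\rm bend},h}}{\delta\bU}}{\delta\bU}+\frac{L_{\kappa''}R_B^2}{4\ve_\phi}\norm{\delta\bU}_{2,h}^2 .
\end{align*}
Here $\kb$ is $C^2$ with $|\kb''|\leq L_{\kappa''}$, and Remark~\ref{rem:discrete-regularity} gives $\norm{\bB_h(\bPhi)}_{\infty,h}\leq R_B$.

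\textbf{Local term.} The gradient part is a quadratic form with self-adjoint $-\Delta_{\rm S,h}$ (where $G=g(\bPhi)$ is fixed), so its difference is exact:
\begin{align*}
\ve_u\iph{-\Delta_{\rm S,h}\bU}{\delta\bU}+\frac{\ve_u}{2}\iph{-\Delta_{\rm S,h}\delta\bU}{\delta\bU}.
\end{align*}
Since $0\leq g\leq L_g$, the bound stated after \eqref{eq:Kh} gives
\begin{align*}
\frac{\ve_u}{2}\iph{-\Delta_{\rm S,h}\delta\bU}{\delta\bU}\leq \frac{\ve_u L_g}{2}\iph{-\Delta_h\delta\bU}{\delta\bU}.
\end{align*}
For the potential part, Taylor's theorem with $W''\leq L_{W''}$ and $g\leq L_g$ gives a remainder of at most $\frac{L_gL_{W''}}{2\ve_u}\norm{\delta\bU}_{2,h}^2$.

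\textbf{Nonlocal term.} $E_{{\rm nl},h}$ is exactly quadratic in $\bU$, since $g(\bPhi)$ is fixed. Its difference is therefore the first variation plus
\begin{align*}
\frac\gamma2\iph{(-\Delta_h)^{-1}[g(\bPhi)\odotv\delta\bU]}{g(\bPhi)\odotv\delta\bU}\leq \frac{\gamma C_{\Delta,2}L_g^2}{2}\norm{\delta\bU}_{2,h}^2,
\end{align*}
using \eqref{eq:inverse-Linf-bound}.

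\textbf{Mass term.} $m_h$ is affine in $\bU$, so the difference is the first variation plus $\frac{K_{\rm mass}}{2}\iph{g(\bPhi)}{\delta\bU}^2$. By Cauchy--Schwarz this is at most $\frac{K_{\rm mass}}2L_g^2|\Omega|\norm{\delta\bU}_{2,h}^2$.

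Summing these (with the $\lambda_u$ prefactors on the OK pieces) yields
\begin{align*}
E_{{\rm total},h}(\bPhi,\bU^+)-E_{{\rm total},h}(\bPhi,\bU)\leq \iph{\frac{\delta E_{{\rm total},h}}{\delta\bU}}{\delta\bU}+\frac{\lambda_u\ve_uL_g}{2}\iph{-\Delta_h\delta\bU}{\delta\bU}+\frac{C_{u,0}}{2}\norm{\delta\bU}_{2,h}^2,
\end{align*}
which matches \eqref{eq:Cu0-explicit} exactly. On the other side,
\begin{align*}
\iph{\calL_{u,h}\delta\bU}{\delta\bU}=\lambda_u\ve_u\beta_1\iph{-\Delta_h\delta\bU}{\delta\bU}+\lambda_u\beta_2\norm{\delta\bU}_{2,h}^2 .
\end{align*}
Since $-\Delta_h\succeq0$, the conditions $\beta_1\geq L_g/2$ and $\lambda_u\beta_2\geq C_{u,0}/2$ from \eqref{eq:B1B2-direct-bound} show that this quadratic form dominates the remainder, which proves the claim.

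The main obstacle is the bending term. It requires a pointwise bound on $\kb''$ even though $p$ is only piecewise $C^2$. I will use that $p'$ and $p''$ vanish at $0$ and $1$, so $p\in C^2(\mathbb R)$ with bounded second derivative. The integral form of the remainder is then valid, with no need for $\bU$ to stay in $[0,1]$. The uniform bound $R_B$ comes from Assumption~\ref{ass:actual-stage-bounds} through Remark~\ref{rem:discrete-regularity}.
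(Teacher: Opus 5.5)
Your proposal is correct and follows essentially the same route as the paper. You expand each term to second order: the Taylor remainder for $\kb$ is controlled by the $R_B$ bound, the local term uses self-adjointness of $-\Delta_{\mathrm S,h}$ with $G_{\max}\leq L_g$ together with a Taylor remainder for $W$, and the nonlocal and mass terms are expanded exactly as quadratics. This gives the same remainder $\frac{\lambda_u\ve_uL_g}{2}\iph{-\Delta_h\delta\bU}{\delta\bU}+\frac{C_{u,0}}{2}\norm{\delta\bU}_{2,h}^2$, which $\iph{\calL_{u,h}\delta\bU}{\delta\bU}$ dominates under the stated conditions on $\beta_1,\beta_2$. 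Your observation that $p\in C^2(\mathbb R)$, so the Taylor remainder for $\kb$ holds without requiring $\bU\in[0,1]$, makes explicit a detail the paper leaves implicit.
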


\begin{proof}

We fix $\bPhi$ and decompose the energy difference
\begin{align}
&E_{{\rm total},h}(\bPhi,\bU^+)-E_{{\rm total},h}(\bPhi,\bU)
\nonumber\\
={}&
\underbrace{
E_{{\rm bend},h}(\bPhi,\bU^+)
-E_{{\rm bend},h}(\bPhi,\bU)
}_{\mathbf I_u} \nonumber\\
&+
\lambda_u\left[\begin{aligned}
&\underbrace{
E_{{\rm loc},h}(\bPhi,\bU^+)
-E_{{\rm loc},h}(\bPhi,\bU)
}_{\mathbf{II}_u} + \underbrace{
E_{{\rm nl},h}(\bPhi,\bU^+)
-E_{{\rm nl},h}(\bPhi,\bU)
}_{\mathbf{III}_u}\\
&+
\underbrace{
E_{{\rm mass},h}(\bPhi,\bU^+)
-E_{{\rm mass},h}(\bPhi,\bU)
}_{\mathbf{IV}_u}
\end{aligned}
\right].
\nonumber
\end{align}

The bending term satisfies
\begin{align}
\begin{aligned}
\mathbf I_u
\leq{}&\iph{\frac{\delta E_{{\rm bend},h}(\bPhi,\bU)}{\delta \bU}}{\bU^+-\bU}
+\frac{L_{\kappa''}R_B^2}{4\ve_{\phi}}\norm{\bU^+-\bU}_{2,h}^2.
\end{aligned}
\label{eq:bending-u-difference-bound}
\end{align}
The short-range energy satisfies
\begin{align}
\begin{aligned}
\mathbf{II}_u
\leq{}&\iph{\frac{\delta E_{{\rm loc},h}(\bPhi,\bU)}{\delta \bU}}{\bU^+-\bU}
\\& +\frac{\ve_uL_g}2
\iph{-\Delta_h(\bU^+-\bU)}{\bU^+-\bU}+\frac{L_gL_{W''}}{2\ve_u}\norm{\bU^+-\bU}_{2,h}^2.
\end{aligned}
\label{eq:local-u-difference-bound}
\end{align}
For the long-range term, we get 
\begin{align}
\begin{aligned}
\mathbf{III}_u
={}&\gamma\iph{g(\bPhi)\odotv\bPsi_h}{\bU^+-\bU} +\frac\gamma2\iph{(-\Delta_h)^{-1}\left[g(\bPhi)\odotv\left(\bU^+-\bU\right)\right]}{g(\bPhi)\odotv\left(\bU^+-\bU\right)}. \nonumber
\end{aligned}
\nonumber
\end{align}
Since $0\leq g(\bPhi)\leq L_g\one$, we have $\norm{g(\bPhi)\odotv(\bU^+-\bU)}_{2,h}\leq L_g\norm{\bU^+-\bU}_{2,h}$.
It follows from \eqref{eq:inverse-Linf-bound} that
\begin{align}
\begin{aligned}
\mathbf{III}_u
\leq{}&\iph{\frac{\delta E_{{\rm nl},h}(\bPhi,\bU)}{\delta \bU}}{\bU^+-\bU}
+\frac{\gamma C_{\Delta,2}L_g^2}{2}\norm{\bU^+-\bU}_{2,h}^2.
\end{aligned}
\label{eq:nonlocal-u-difference-bound}
\end{align}
Finally,
\begin{align}
\begin{aligned}
\mathbf{IV}_u
={}&\iph{\frac{\delta E_{{\rm mass},h}(\bPhi,\bU)}{\delta \bU}}{\bU^+-\bU}
+\frac{K_{{\rm mass}}}{2}\iph{g(\bPhi)}{\bU^+-\bU}^2\\
\leq{}&\iph{\frac{\delta E_{{\rm mass},h}(\bPhi,\bU)}{\delta \bU}}{\bU^+-\bU}
+\frac{K_{{\rm mass}}L_g^2\abs\Omega}{2}\norm{\bU^+-\bU}_{2,h}^2.
\end{aligned}
\label{eq:mass-u-difference-bound}
\end{align}
Therefore, denoting
\begin{align}
C_{u,0}
={}&\frac{L_{\kappa''}R_B^2}{2\ve_{\phi}}
+\lambda_u\left[\frac{L_gL_{W''}}{\ve_u}
+C_{\Delta,2}\gamma L_g^2
+K_{{\rm mass}}L_g^2\abs\Omega\right],
\nonumber
\end{align}
and applying \eqref{eq:bending-u-difference-bound}, \eqref{eq:local-u-difference-bound}, \eqref{eq:nonlocal-u-difference-bound}, and \eqref{eq:mass-u-difference-bound}, we obtain
\begin{align}
\begin{aligned}
&E_{{\rm total},h}(\bPhi,\bU^+)-E_{{\rm total},h}(\bPhi,\bU)\\
\leq{}&
\iph{\frac{\delta E_{{\rm total},h}(\bPhi,\bU)}{\delta \bU}}{\bU^+-\bU}
+\frac{\lambda_u\ve_uL_g}2
  \iph{-\Delta_h(\bU^+-\bU)}{\bU^+-\bU}\\
&+
\Bigg[
  \frac{L_{\kappa''}R_B^2}{4\ve_{\phi}}
  +\frac{\lambda_uL_gL_{W''}}{2\ve_u}
  +\frac{C_{\Delta,2}\lambda_u\gamma L_g^2}{2}
  +\frac{\lambda_uK_{{\rm mass}}L_g^2\abs\Omega}{2}
\Bigg]
\norm{\bU^+-\bU}_{2,h}^2.
\end{aligned}
\nonumber
\end{align}
By the definition \eqref{eq:Cu0-explicit}, we have
\begin{align}
& E_{{\rm total},h}(\bPhi,\bU^+)- E_{{\rm total},h}(\bPhi,\bU)\
\leq{}
\iph{\frac{\delta E_{{\rm total},h}(\bPhi,\bU)}{\delta \bU}}{\bU^+-\bU}
+\mathcal Q_{u,h}(\bU^+-\bU),
\nonumber
\end{align}
where
\begin{align}
\mathcal Q_{u,h}(\bU^+-\bU)
={}&
\frac{\lambda_u\ve_uL_g}2
\iph{-\Delta_h(\bU^+-\bU)}{\bU^+-\bU}
+\frac{C_{u,0}}2\norm{\bU^+-\bU}_{2,h}^2.
\label{eq:Qu-direct-definition}
\end{align}
Then, from the definition~\eqref{eq:Su-direct}, we have
\begin{align}
\iph{\calL_{u,h}(\bU^+-\bU)}{\bU^+-\bU}
={}&
\lambda_u\ve_u\beta_1
\iph{-\Delta_h(\bU^+-\bU)}{\bU^+-\bU}
+\lambda_u\beta_2\norm{\bU^+-\bU}_{2,h}^2.
\label{eq:Su-quadratic-form-direct}
\end{align}
Subtracting \eqref{eq:Qu-direct-definition} from \eqref{eq:Su-quadratic-form-direct} yields
\begin{align}
\begin{aligned}
    &\iph{\calL_{u,h}(\bU^+-\bU)}{\bU^+-\bU}
-\mathcal Q_{u,h}(\bU^+-\bU)\\
={}&
\lambda_u\ve_u
\left(
  \beta_1-\frac12L_g
\right)
\iph{-\Delta_h(\bU^+-\bU)}{\bU^+-\bU}
+
\left(
  \lambda_u\beta_2-\frac12C_{u,0}
\right)
\norm{\bU^+-\bU}_{2,h}^2\\
={}&
\lambda_u\ve_u
\left(
  \beta_1-\frac12L_g
\right)
\iph{-\Delta_h(\bU^+-\bU)}{\bU^+-\bU}+
\lambda_u
\left(
  \beta_2-\frac{C_{u,0}}{2\lambda_u}
\right)
\norm{\bU^+-\bU}_{2,h}^2.
\end{aligned}
\nonumber
\end{align}
By \eqref{eq:B1B2-direct-bound}, the stabilization constants satisfy
\begin{align}
  \beta_1-\frac12L_g
  &\geq0,\quad
  \beta_2-\frac{C_{u,0}}{2\lambda_u}
  \geq0,
\nonumber
\end{align}
which yields
\begin{align}
  \mathcal Q_{u,h}(\bU^+-\bU)
  \leq
  \iph{\calL_{u,h}(\bU^+-\bU)}{\bU^+-\bU}.
\nonumber
\end{align}
Therefore, we conclude that
\begin{align}
& E_{{\rm total},h}(\bPhi,\bU^+)- E_{{\rm total},h}(\bPhi,\bU)
\leq{}
\iph{
  \frac{\delta E_{{\rm total},h}(\bPhi,\bU)}{\delta \bU}
  +\calL_{u,h}(\bU^+-\bU)
}{\bU^+-\bU}.
\nonumber
\end{align}
\end{proof}

We now prove the discrete energy dissipation of the alternating ETD1 \eqref{eqn:ETDRK1} and Strang-ETDRK2 \eqref{eqn:ETDRK2_Strang} schemes. For the proofs of Theorems~\ref{thm:ETD1-direct-energy} and \ref{thm:ETDRK2-direct-energy}, we introduce the following functions and negative-definite operators.
Define functions $\eta_1(a)$ and $\eta_2(a)$ as 
\begin{align}
  \eta_1(a)=a-\frac{a}{1-e^{-a}},\quad
  \eta_2(a)=a-\frac{a^2}{e^{-a}-1+a}, \quad a\neq 0.
\nonumber
\end{align}
It is clear that for every $a>0$,
\begin{align}
  \eta_1(a)<0,
  \quad
  \eta_2(a)-\frac12\eta_1(a)
  =\frac{a\left[e^{-a}(3+a-e^{-a})-2\right]}
  {2(e^{-a}-1+a)(1-e^{-a})}
  \leq0.
\nonumber
\end{align}
For a self-adjoint positive-definite linear operator $\calL$, we further define
\begin{align}
\calH_1(\tau,\calL)
&=\calL-\calL\left(I-e^{-\tau\calL}\right)^{-1} = \frac{\eta_1(\calL\tau)}{\tau},
\nonumber\\
\calH_2(\tau,\calL)
&=\calL-\tau\calL^2
\left(e^{-\tau\calL}-I+\tau\calL\right)^{-1} = \frac{\eta_2(\calL\tau)}{\tau}.
\nonumber
\end{align}
As shown in \cite{luoETDmembrane,fu2022}, these operators are negative-definite,
\begin{align}
  \calH_1(\tau,\calL)\prec0,
  \quad
  \calH_2(\tau,\calL)-\frac12\calH_1(\tau,\calL)\preceq0.
\label{eq:H-signs-direct}
\end{align}

\begin{theorem}[Energy dissipation of the alternating ETD1 method]
\label{thm:ETD1-direct-energy}
Let Assumption~\ref{ass:actual-stage-bounds} hold, and suppose the stabilization parameters satisfy Lemma \ref{prop:direct-phi-increment} and \ref{prop:direct-u-increment}. Then the alternating ETD1 method \eqref{eqn:ETDRK1} satisfies, for every $\tau>0$,
\begin{align}
   E_{{\rm total},h}(\bPhi^{n+1},\bU^{n+1})
  \leq E_{{\rm total},h}(\bPhi^n,\bU^n).
\nonumber
\end{align}
More precisely, 
we have
\begin{align}
\begin{aligned}
    & E_{{\rm total},h}(\bPhi^{n+1},\bU^{n+1})
- E_{{\rm total},h}(\bPhi^n,\bU^n)\\
\leq{}&
\iph{\calH_1(\tau,\calL_{\phi,h})(\bPhi^{n+1}-\bPhi^n)}
{\bPhi^{n+1}-\bPhi^n}
+
\iph{\calH_1(\tau,\calL_{u,h})(\bU^{n+1}-\bU^n)}
{\bU^{n+1}-\bU^n}
\leq0.
\end{aligned}
\nonumber
\end{align}
\end{theorem}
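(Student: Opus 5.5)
We split the one-step energy change along the two alternating substeps:
\begin{align*}
E_{{\rm total},h}(\bPhi^{n+1},\bU^{n+1})-E_{{\rm total},h}(\bPhi^n,\bU^n)
={}&\left[E_{{\rm total},h}(\bPhi^{n+1},\bU^{n})-E_{{\rm total},h}(\bPhi^n,\bU^n)\right]\\
&+\left[E_{{\rm total},h}(\bPhi^{n+1},\bU^{n+1})-E_{{\rm total},h}(\bPhi^{n+1},\bU^n)\right].
\end{align*}
The first bracket is a pure $\phi$-substep with $\bU=\bU^n$ frozen, and the second is a pure $u$-substep with $\bPhi=\bPhi^{n+1}$ frozen. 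By Assumption~\ref{ass:actual-stage-bounds}, all of $\bPhi^n,\bPhi^{n+1},\bU^n,\bU^{n+1}$ satisfy the required regularity bounds. We can therefore apply Lemma~\ref{prop:direct-phi-increment} to the first bracket (with $\bPhi=\bPhi^n$, $\bPhi^+=\bPhi^{n+1}$) and Lemma~\ref{prop:direct-u-increment} to the second (with $\bU=\bU^n$, $\bU^+=\bU^{n+1}$). This bounds each bracket by $\iph{\delta E/\delta\cdot+\calL(\cdot^+-\cdot)}{\cdot^+-\cdot}$, where the variational derivative is evaluated at the old value of the updated variable and the current value of the frozen one.

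The key step is to express the variational derivatives through the ETD1 update. For the $\phi$-substep, set $\delta\bPhi=\bPhi^{n+1}-\bPhi^n$. The relation $\calR_{\phi,h}(\bPhi^n;\bU^n)=\calL_{\phi,h}\bPhi^n-\frac{\delta E_{{\rm total},h}(\bPhi^n,\bU^n)}{\delta\bPhi}$ together with $\tau\varphi_1(\tau\calL)=\calL^{-1}(I-e^{-\tau\calL})$ lets us rewrite the ETD1 formula as
\begin{align*}
\delta\bPhi=(e^{-\tau\calL_{\phi,h}}-I)\bPhi^n+\calL_{\phi,h}^{-1}(I-e^{-\tau\calL_{\phi,h}})\left[\calL_{\phi,h}\bPhi^n-\tfrac{\delta E_{{\rm total},h}}{\delta\bPhi}\right]
=-\calL_{\phi,h}^{-1}(I-e^{-\tau\calL_{\phi,h}})\tfrac{\delta E_{{\rm total},h}(\bPhi^n,\bU^n)}{\delta\bPhi}.
\end{align*}
Since $\calL_{\phi,h}$ is self-adjoint and positive definite, $I-e^{-\tau\calL_{\phi,h}}$ is invertible, and all these operators commute. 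Hence $\frac{\delta E_{{\rm total},h}}{\delta\bPhi}=-\calL_{\phi,h}(I-e^{-\tau\calL_{\phi,h}})^{-1}\delta\bPhi$. Substituting this into the bound from Lemma~\ref{prop:direct-phi-increment} gives
$\iph{[\calL_{\phi,h}-\calL_{\phi,h}(I-e^{-\tau\calL_{\phi,h}})^{-1}]\delta\bPhi}{\delta\bPhi}=\iph{\calH_1(\tau,\calL_{\phi,h})\delta\bPhi}{\delta\bPhi}$.

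The $u$-substep is handled identically. We use $\calR_{u,h}(\bU^n;\bPhi^{n+1})=\calL_{u,h}\bU^n-\frac{\delta E_{{\rm total},h}(\bPhi^{n+1},\bU^n)}{\delta\bU}$, which is exactly the frozen argument appearing in the bound from Lemma~\ref{prop:direct-u-increment}, and this yields $\iph{\calH_1(\tau,\calL_{u,h})\delta\bU}{\delta\bU}$. Summing the two substep bounds and invoking $\calH_1\prec0$ from \eqref{eq:H-signs-direct} gives the claimed inequality for every $\tau>0$.

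The main point to check is that the arguments match: the ETD1 update for $\bU$ uses $\bPhi^{n+1}$, and this must coincide with the frozen $\bPhi$ at which Lemma~\ref{prop:direct-u-increment} is applied. It does, because the splitting above was chosen with $\bPhi^{n+1}$ frozen in the second bracket. Beyond this, the argument only uses the spectral calculus for the commuting self-adjoint operators.
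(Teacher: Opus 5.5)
Your proposal is correct and follows essentially the same route as the paper: you split the step into the $\phi$-substep with $\bU^n$ frozen and the $u$-substep with $\bPhi^{n+1}$ frozen, then apply Lemmas~\ref{prop:direct-phi-increment} and \ref{prop:direct-u-increment}, and then use $\tau\varphi_1(\tau\calL)=\calL^{-1}(I-e^{-\tau\calL})$ to turn each bound into the $\calH_1$ quadratic form. The only cosmetic difference is that you solve the ETD1 update for the variational derivative, $\frac{\delta E_{{\rm total},h}}{\delta\bPhi}=-\calL_{\phi,h}(I-e^{-\tau\calL_{\phi,h}})^{-1}\delta\bPhi$, whereas the paper solves it for $\calR_{\phi,h}(\bPhi^n;\bU^n)$ and substitutes into $\iph{\calL_{\phi,h}\bPhi^{n+1}-\calR_{\phi,h}(\bPhi^n;\bU^n)}{\bPhi^{n+1}-\bPhi^n}$; the two are equivalent.
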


\begin{proof}
First, for $\bPhi^{n+1} = \mathrm{ETD1}_{\phi,\tau}(\bPhi^{n};\bU^{n})$ in \eqref{eqn:ETDRK1}, Lemma~\ref{prop:direct-phi-increment} gives
\begin{align}
\begin{aligned}
    & E_{{\rm total},h}(\bPhi^{n+1},\bU^n)- E_{{\rm total},h}(\bPhi^n,\bU^n)\\
&\quad\leq
\iph{\frac{\delta E_{{\rm total},h}(\bPhi^n,\bU^n)}{\delta\bPhi}
+\calL_{\phi,h}(\bPhi^{n+1}-\bPhi^n)}
{\bPhi^{n+1}-\bPhi^n}\\
&\quad=
\iph{\calL_{\phi,h}\bPhi^{n+1}
-\calR_{\phi,h}(\bPhi^n;\bU^n)}
{\bPhi^{n+1}-\bPhi^n}.
\end{aligned}
\label{eq:phi-before-H1}
\end{align}
From \eqref{eqn:ETDRK1} and $\tau\varphi_1(\tau\calL)=\calL^{-1}(I-e^{-\tau\calL})$,
\begin{align}
\calR_{\phi,h}(\bPhi^n;\bU^n)
={}&\calL_{\phi,h}\bPhi^n
+\calL_{\phi,h}
\left(I-e^{-\tau\calL_{\phi,h}}\right)^{-1}
(\bPhi^{n+1}-\bPhi^n).
\nonumber
\end{align}
Substitution into \eqref{eq:phi-before-H1} yields
\begin{align}
& E_{{\rm total},h}(\bPhi^{n+1},\bU^n)- E_{{\rm total},h}(\bPhi^n,\bU^n)\leq
\iph{\calH_1(\tau,\calL_{\phi,h})(\bPhi^{n+1}-\bPhi^n)}
{\bPhi^{n+1}-\bPhi^n}
\leq0.
\label{eq:phi-ETD1-decay-direct}
\end{align}
Second, for $\bU^{n+1} = \mathrm{ETD1}_{u,\tau}(\bU^{n};\bPhi^{n+1})$ in \eqref{eqn:ETDRK1}, repeating the same calculation with Lemma~\ref{prop:direct-u-increment} gives
\begin{align}
& E_{{\rm total},h}(\bPhi^{n+1},\bU^{n+1})
- E_{{\rm total},h}(\bPhi^{n+1},\bU^n)\leq
\iph{\calH_1(\tau,\calL_{u,h})(\bU^{n+1}-\bU^n)}
{\bU^{n+1}-\bU^n}
\leq0.
\label{eq:u-ETD1-decay-direct}
\end{align}
Adding \eqref{eq:phi-ETD1-decay-direct} and \eqref{eq:u-ETD1-decay-direct}, we conclude that
\begin{align}
\begin{aligned}
& E_{{\rm total},h}(\bPhi^{n+1},\bU^{n+1})
- E_{{\rm total},h}(\bPhi^n,\bU^n)
\\
\leq{}&
\iph{
\calH_1(\tau,\calL_{\phi,h})
(\bPhi^{n+1}-\bPhi^n)
}{
\bPhi^{n+1}-\bPhi^n
}
+
\iph{
\calH_1(\tau,\calL_{u,h})
(\bU^{n+1}-\bU^n)
}{
\bU^{n+1}-\bU^n
}
\leq0.
\end{aligned}
\nonumber
\end{align}
\end{proof}

\begin{theorem}[Energy dissipation of the alternating Strang-ETDRK2 method] \label{thm:ETDRK2-direct-energy} 
Let Assumption~\ref{ass:actual-stage-bounds} hold for the scheme \eqref{eqn:ETDRK2_Strang} and the stabilization parameters satisfy the conditions in Lemmas~\ref{prop:direct-phi-increment} and \ref{prop:direct-u-increment}. Then, for every $\tau>0$, the alternating Strang-ETDRK2 scheme \eqref{eqn:ETDRK2_Strang} satisfies 
\begin{align}  E_{{\rm total},h}(\bPhi^{n+1},\bU^{n+1}) \leq  E_{{\rm total},h}(\bPhi^n,\bU^n). \label{eq:Strang-full-energy-decay} 
\end{align} 
\end{theorem}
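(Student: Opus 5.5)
The plan is to reduce the Strang scheme to three single-variable ETDRK2 substeps and prove that each substep does not increase the discrete energy. Adding the three estimates
\begin{align*}
E_{{\rm total},h}(\bPhi^{n+\frac12},\bU^{n})&\leq E_{{\rm total},h}(\bPhi^{n},\bU^{n}),\\
E_{{\rm total},h}(\bPhi^{n+\frac12},\bU^{n+1})&\leq E_{{\rm total},h}(\bPhi^{n+\frac12},\bU^{n}),\\
E_{{\rm total},h}(\bPhi^{n+1},\bU^{n+1})&\leq E_{{\rm total},h}(\bPhi^{n+\frac12},\bU^{n+1})
\end{align*}
then gives \eqref{eq:Strang-full-energy-decay}. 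All three substeps have the same structure: a generic ETDRK2 step of size $s\in\{\tau/2,\tau\}$ with the other variable frozen. So I will prove one statement, for $\mathrm{ETDRK2}_{\phi,s}$ with $\bU$ fixed, and then transcribe it to $\mathrm{ETDRK2}_{u,\tau}$ with $\bPhi$ fixed, using Lemma~\ref{prop:direct-u-increment} in place of Lemma~\ref{prop:direct-phi-increment}.

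For one substep $\bPhi\mapsto\widetilde{\bPhi}\mapsto\bPhi^+$ with $\calL=\calL_{\phi,h}$, I split the energy change through the intermediate stage:
\[
E(\bPhi^+)-E(\bPhi)=\bigl[E(\widetilde{\bPhi})-E(\bPhi)\bigr]+\bigl[E(\bPhi^+)-E(\widetilde{\bPhi})\bigr].
\]
The first bracket is exactly an ETD1 step. Lemma~\ref{prop:direct-phi-increment} together with the argument in Theorem~\ref{thm:ETD1-direct-energy} bounds it by $\iph{\calH_1(s,\calL)(\widetilde{\bPhi}-\bPhi)}{\widetilde{\bPhi}-\bPhi}$. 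This uses Assumption~\ref{ass:actual-stage-bounds}, which covers the stages $\widetilde{\bPhi}^{n+\frac12}$ and $\widetilde{\bPhi}^{n+1}$, so the lemma applies to the pair $(\bPhi,\widetilde{\bPhi})$.

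For the second bracket I apply Lemma~\ref{prop:direct-phi-increment} to the pair $(\widetilde{\bPhi},\bPhi^+)$. This gives the bound $\iph{\calL\bPhi^+-\calR_{\phi,h}(\widetilde{\bPhi};\bU)}{\bPhi^+-\widetilde{\bPhi}}$, after using $\delta E/\delta\bPhi=\calL\bPhi-\calR_{\phi,h}$. Next I eliminate the two nonlinear terms with the scheme. Writing $\calR_0=\calR_{\phi,h}(\bPhi;\bU)$ and $\widetilde{\calR}=\calR_{\phi,h}(\widetilde{\bPhi};\bU)$, I use $s\varphi_1(s\calL)=\calL^{-1}(I-e^{-s\calL})$ and $s\varphi_2(s\calL)=s^{-1}\calL^{-2}(e^{-s\calL}-I+s\calL)$. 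These give
\begin{align*}
\calR_0&=\calL\bPhi+\calL(I-e^{-s\calL})^{-1}(\widetilde{\bPhi}-\bPhi),\\
\widetilde{\calR}-\calR_0&=s\calL^2(e^{-s\calL}-I+s\calL)^{-1}(\bPhi^+-\widetilde{\bPhi}).
\end{align*}
Substituting, with $\delta_1=\widetilde{\bPhi}-\bPhi$ and $\delta_2=\bPhi^+-\widetilde{\bPhi}$, every term becomes a quadratic form in $\delta_1,\delta_2$ built from commuting functions of $\calL$:
\[
\iph{\calL\delta_2+\calL\delta_1-\calL(I-e^{-s\calL})^{-1}\delta_1-s\calL^2(e^{-s\calL}-I+s\calL)^{-1}\delta_2}{\delta_2}
=\iph{\calH_1\delta_1+\calH_2\delta_2}{\delta_2}.
\]
Adding the first bracket, the total change is bounded by
\[
\iph{\calH_1\delta_1}{\delta_1}+\iph{\calH_1\delta_1}{\delta_2}+\iph{\calH_2\delta_2}{\delta_2}.
\]

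The main step is showing that this quadratic form is nonpositive, which is the standard ETDRK2 argument of \cite{fu2022}. With $\calA=-\calH_1\succ0$, Young's inequality in the $\calA$-inner product gives $|\iph{\calH_1\delta_1}{\delta_2}|\leq\iph{\calA\delta_1}{\delta_1}+\tfrac14\iph{\calA\delta_2}{\delta_2}$, and the first piece cancels the $\delta_1$ term. What remains is at most $\iph{(\calH_2-\tfrac14\calH_1)\delta_2}{\delta_2}$. Since $\calH_1\prec0$, we have $-\tfrac14\calH_1\preceq-\tfrac12\calH_1$, so \eqref{eq:H-signs-direct} gives $\calH_2-\tfrac14\calH_1\preceq\calH_2-\tfrac12\calH_1\preceq0$. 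Hence each substep is energy dissipative for every step size.

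The $u$-substep is treated identically with $\calL_{u,h}$ and Lemma~\ref{prop:direct-u-increment}, and summing the three substeps completes the proof. The one hypothesis to verify is that Assumption~\ref{ass:actual-stage-bounds} holds at the endpoints of every pair the lemmas are applied to: $(\bPhi^n,\widetilde{\bPhi}^{n+\frac12})$, $(\widetilde{\bPhi}^{n+\frac12},\bPhi^{n+\frac12})$, $(\bU^n,\widetilde{\bU}^{n+1})$, $(\widetilde{\bU}^{n+1},\bU^{n+1})$, $(\bPhi^{n+\frac12},\widetilde{\bPhi}^{n+1})$ and $(\widetilde{\bPhi}^{n+1},\bPhi^{n+1})$. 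This is exactly what the assumption lists.
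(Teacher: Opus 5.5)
Your proof is correct and follows the paper's route. You split the step into the three substeps, apply Lemma~\ref{prop:direct-phi-increment} (resp.\ Lemma~\ref{prop:direct-u-increment}) to the predictor and corrector pairs, use the scheme relations to eliminate the nonlinear remainders, and conclude from \eqref{eq:H-signs-direct} that $\iph{\calH_1\delta_1}{\delta_1}+\iph{\calH_1\delta_1}{\delta_2}+\iph{\calH_2\delta_2}{\delta_2}\leq0$. The only difference is the last step: the paper rewrites this form exactly as $\frac12\iph{\calH_1\delta_1}{\delta_1}+\frac12\iph{\calH_1(\delta_1+\delta_2)}{\delta_1+\delta_2}+\iph{(\calH_2-\frac12\calH_1)\delta_2}{\delta_2}$, which keeps explicit dissipation terms, whereas your Young's inequality in the $-\calH_1$ inner product gives the slightly weaker but sufficient bound $\iph{(\calH_2-\frac14\calH_1)\delta_2}{\delta_2}\leq0$.
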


\begin{proof} 
Similar as in Theorem~\ref{thm:ETD1-direct-energy}, we apply Lemmas~\ref{prop:direct-phi-increment} and \ref{prop:direct-u-increment} to the three steps. For $\bPhi^{n+\frac{1}{2}} = \mathrm{ETDRK2}_{\phi,\frac{\tau}{2}}(\bPhi^{n};\bU^{n})$ in \eqref{eqn:ETDRK2_Strang}, we have
\begin{align} 
& \calR_{\phi,h}(\bPhi^n;\bU^n) ={} \calL_{\phi,h}\bPhi^n + \calL_{\phi,h} \left(I-e^{-\frac{\tau}{2}\calL_{\phi,h}}\right)^{-1} (\widetilde{\bPhi}^{\,n+\frac{1}{2}}-\bPhi^n), \label{eq:Strang-first-R-predictor} \\
& \calR_{\phi,h}(\widetilde{\bPhi}^{\,n+\frac{1}{2}};\bU^n) ={} \calR_{\phi,h}(\bPhi^n;\bU^n) + \frac{\tau}{2}\calL_{\phi,h}^2 \left( e^{-\frac{\tau}{2}\calL_{\phi,h}}-I+\frac{\tau}{2}\calL_{\phi,h} \right)^{-1} (\bPhi^{n+\frac{1}{2}}-\widetilde{\bPhi}^{\,n+\frac{1}{2}}). \label{eq:Strang-first-R-corrector}
\end{align} 
Applying Lemma~\ref{prop:direct-phi-increment} first to $(\bPhi^n,\widetilde{\bPhi}^{\,n+\frac{1}{2}})$ and then to $(\widetilde{\bPhi}^{\,n+\frac{1}{2}},\bPhi^{n+\frac{1}{2}})$ with $\bU^n$ fixed, we have
\begin{align} 
\begin{aligned} 
& E_{{\rm total},h}(\bPhi^{n+\frac{1}{2}},\bU^n) - E_{{\rm total},h}(\bPhi^n,\bU^n) \\ \leq{}&  \iph{ \calL_{\phi,h}\widetilde{\bPhi}^{\,n+\frac{1}{2}} -\calR_{\phi,h}(\bPhi^n;\bU^n) }{ \widetilde{\bPhi}^{\,n+\frac{1}{2}}-\bPhi^n } \\ &+  \iph{ \calL_{\phi,h}\bPhi^{n+\frac{1}{2}} -\calR_{\phi,h}(\widetilde{\bPhi}^{\,n+\frac{1}{2}};\bU^n) }{ \bPhi^{n+\frac{1}{2}}-\widetilde{\bPhi}^{\,n+\frac{1}{2}} }. 
\end{aligned} 
\label{eq:Strang-first-phi-two-increments} 
\end{align} 
Substituting \eqref{eq:Strang-first-R-predictor} and \eqref{eq:Strang-first-R-corrector} into \eqref{eq:Strang-first-phi-two-increments} yields 
\begin{align} 
\begin{aligned} 
& E_{{\rm total},h}(\bPhi^{n+\frac{1}{2}},\bU^n) - E_{{\rm total},h}(\bPhi^n,\bU^n) \\ 
\leq{}& \frac1{2} \iph{\calH_1\left(\frac{\tau}{2},\calL_{\phi,h}\right)(\widetilde{\bPhi}^{\,n+\frac{1}{2}}-\bPhi^n)} {\widetilde{\bPhi}^{\,n+\frac{1}{2}}-\bPhi^n} + \frac1{2} \iph{\calH_1\left(\frac{\tau}{2},\calL_{\phi,h}\right)(\bPhi^{n+\frac{1}{2}}-\bPhi^n)} {\bPhi^{n+\frac{1}{2}}-\bPhi^n} \\ 
&+  \iph{ \left( \calH_2\left(\frac{\tau}{2},\calL_{\phi,h}\right) -\frac12\calH_1\left(\frac{\tau}{2},\calL_{\phi,h}\right) \right)(\bPhi^{n+\frac{1}{2}}-\widetilde{\bPhi}^{\,n+\frac{1}{2}}) }{ \bPhi^{n+\frac{1}{2}}-\widetilde{\bPhi}^{\,n+\frac{1}{2}} } \leq0. 
\end{aligned} 
\label{eq:Strang-first-phi-energy-decay} 
\end{align} 
The last inequality follows from \eqref{eq:H-signs-direct}. For $\bU^{n+1} = \mathrm{ETDRK2}_{u,\tau}(\bU^{n};\bPhi^{n+\frac{1}{2}})$ in \eqref{eqn:ETDRK2_Strang}, Lemma~\ref{prop:direct-u-increment} gives 
\begin{align} 
\begin{aligned} 
& E_{{\rm total},h}(\bPhi^{n+\frac{1}{2}},\bU^{n+1}) - E_{{\rm total},h}(\bPhi^{n+\frac{1}{2}},\bU^n) \\ 
\leq{}& \frac1{2} \iph{\calH_1(\tau,\calL_{u,h})(\widetilde{\bU}^{\,n+1}-\bU^n)} {\widetilde{\bU}^{\,n+1}-\bU^n} + \frac1{2} \iph{\calH_1(\tau,\calL_{u,h})(\bU^{n+1}-\bU^n)} {\bU^{n+1}-\bU^n} \\
&+  \iph{ \left( \calH_2(\tau,\calL_{u,h}) -\frac12\calH_1(\tau,\calL_{u,h}) \right)(\bU^{n+1}-\widetilde{\bU}^{\,n+1}) }{ \bU^{n+1}-\widetilde{\bU}^{\,n+1} } \leq0. \end{aligned} 
\label{eq:Strang-u-energy-decay} 
\end{align} 
For $\bPhi^{n+1} = \mathrm{ETDRK2}_{\phi,\frac{\tau}{2}}(\bPhi^{n+\frac{1}{2}};\bU^{n+1})$ in \eqref{eqn:ETDRK2_Strang}, Lemma~\ref{prop:direct-phi-increment} gives an estimate analogous to \eqref{eq:Strang-first-phi-energy-decay}
\begin{align} 
\begin{aligned} 
& E_{{\rm total},h}(\bPhi^{n+1},\bU^{n+1}) - E_{{\rm total},h}(\bPhi^{n+\frac{1}{2}},\bU^{n+1})\leq0.
\end{aligned} 
\label{eq:Strang-second-phi-energy-decay} 
\end{align} 
Therefore, combining \eqref{eq:Strang-first-phi-energy-decay}, \eqref{eq:Strang-u-energy-decay}, and \eqref{eq:Strang-second-phi-energy-decay}, we conclude that 
\begin{align} 
\begin{aligned} 
&  E_{{\rm total},h}(\bPhi^{n+1},\bU^{n+1}) - E_{{\rm total},h}(\bPhi^n,\bU^n)\leq0,  
\end{aligned}\nonumber 
\end{align} 
which proves \eqref{eq:Strang-full-energy-decay}. 
\end{proof} 

\section{Numerical Experiments}\label{sec:numerics}

In this section, we present several numerical experiments to verify the discrete energy dissipation of the proposed alternating ETD schemes and to explore the effects of key model parameters on protein phase separation and membrane deformation. All simulations are performed on Cartesian grids with periodic boundary conditions. Spatial derivatives are approximated using the second-order finite difference operators introduced above. The discretized ETD operators are implemented efficiently using the fast Cosine transform \cite{du2021mbp}. At each time step, the total discrete free energy is computed from \eqref{eq:discrete-energy}. All computations were implemented on a laptop with a 2.7 GHz Intel CPU and 32 GB of memory.

For the 2D simulations, we take $\Omega=[-1,1)^2$ with $N_x=N_y=256$ and mesh size $h_x=h_y=h=\frac{2}{256}$. The interface widths are $\ve_{\phi}=10h$ and $\ve_u=5h$, and the time step size is $\tau=10^{-3}$. Unless otherwise stated, we set $\lambda_{\rm surf}=0.5$, $K_{\rm vol}=10$, and $K_{\rm mass}=10000$. The stabilization parameters $\alpha_1$, $\alpha_2$, $\beta_1$, and $\beta_2$ are chosen sufficiently large to satisfy the conditions in Lemmas~\ref{prop:direct-phi-increment} and~\ref{prop:direct-u-increment}. The prescribed volume $V_0$ is determined by the initial phase field vesicle $\bPhi^{0}$. The parameters $\omega$, $\kappa_-$, $\kappa_+$, $\lambda_u$, and $\gamma$ are specified separately for each experiment. For the 3D simulations, we use the domain $\Omega=[-1,1)^3$ with $N_x=N_y=N_z=128$. We take the time step size $\tau=10^{-3}$ and the interface widths $\ve_{\phi}=10h$ and $\ve_u=5h$. The fixed parameters are $\lambda_{\rm surf}=0.5$, $K_{{\rm vol}}=10$, $K_{{\rm mass}}=10000$. In the 2D figures, the white curve represents the membrane interface, and the color along the curve indicates the protein variable $u$. Red segments correspond to protein-rich subdomains with $u\approx1$, whereas white segments correspond to protein-poor subdomains with $u\approx0$. In the 3D figures, the membrane interface is represented by the isosurface $\phi=1/2$, which is also colored according to the protein variable $u$.

\subsection{Discrete Energy Dissipation of the ETD Schemes}
\label{subsec:numerical-energy-stability}

We first validate the discrete energy dissipation of the alternating ETD1 and Strang-ETDRK2 schemes. In both experiments, the initial phase field vesicle $\bPhi^0$ is given by
\begin{align}
        \bPhi^0 = 0.5+0.5\tanh{\left(\frac{3(r_0-r)}{\ve_{\phi}}\right)}, \quad r_0 = 0.4, \label{eqn:ini_phi}
\end{align}
where $r$ is the distance from each grid point to the center of the domain. The top and bottom panels of Figure~\ref{fig:energy_stab} start with $7$ and $6$ protein-rich subdomains on the membrane, respectively. In both experiments, we set $\omega=0.5$, $\kappa_-=3$, $\kappa_+=0.2$, and $\lambda_u=20$, with $\gamma=12000$ for the alternating ETD1 scheme and $\gamma=15000$ for the Strang-ETDRK2 scheme. During $0\leq t\leq200$, the membrane is held fixed, and only the protein variable $u$ evolves on the circular membrane. At $t=200$, when the protein pattern has nearly reached a stationary state, we allow the membrane to evolve by the fully coupled dynamics. The snapshots in Figure~\ref{fig:energy_stab} are taken at $t=0$, $10$, $200$, and $210$. In both experiments, the total discrete free energy decreases rapidly at the beginning and then slowly as the protein pattern approaches an equilibrium state on the fixed membrane. Once the membrane starts to evolve at $t=200$, the coupled dynamics leads to membrane deformation and a further decrease in the total free energy. These observations are consistent with the discrete energy dissipation law in Theorems~\ref{thm:ETD1-direct-energy} and \ref{thm:ETDRK2-direct-energy}.

\begin{figure}[t]
    \begin{center}
    \includegraphics[width=0.85\textwidth]{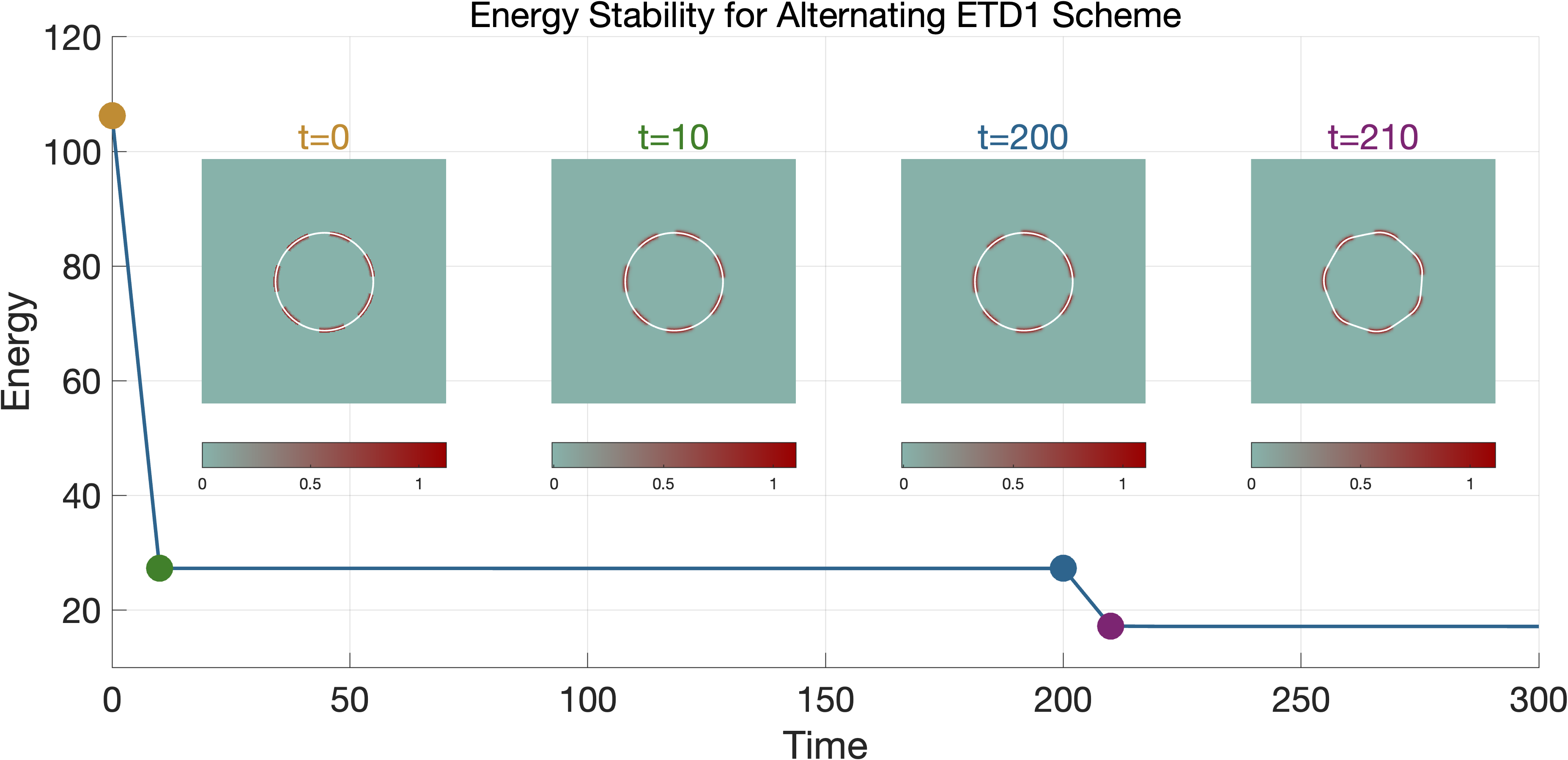} \\
    \vspace{2mm}
    \includegraphics[width=0.85\textwidth]{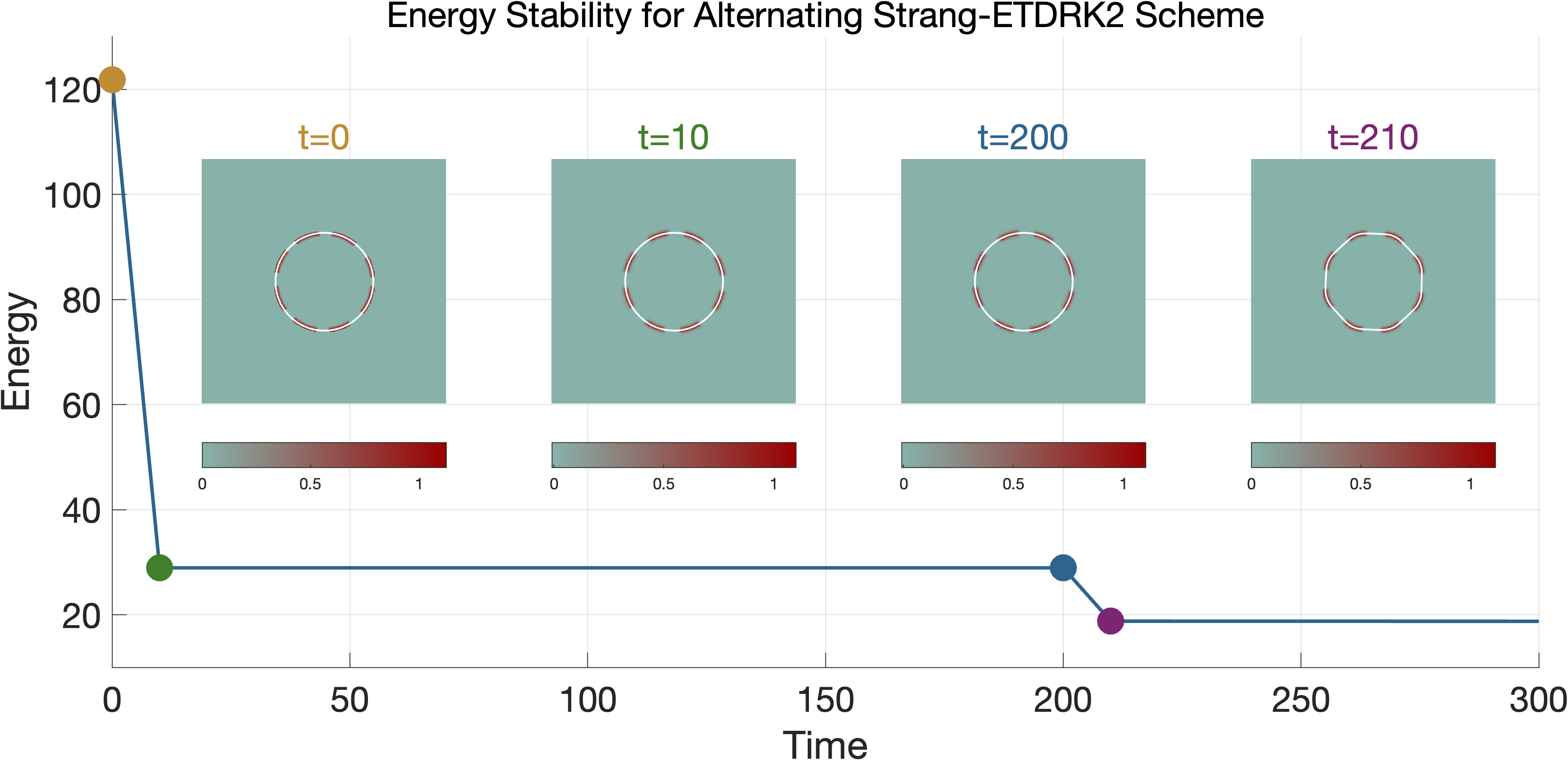}
    \end{center}
    \vspace{-0mm}
    \caption{Discrete energy evolution generated by the alternating ETD1 scheme (top) and the alternating Strang-ETDRK2 scheme (bottom). The membrane is fixed for $0\leq t\leq200$ and is then evolved. The insets show the numerical states at $t=0$, $10$, $200$, and $210$. For the first experiment, $\omega=0.5$, $(\kappa_-,\kappa_+)=(3,0.2)$, $\lambda_u=20$, and $\gamma=12000$. For the second experiment, $\omega=0.5$, $(\kappa_-,\kappa_+)=(3,0.2)$, $\lambda_u=20$, and $\gamma=15000$.}   
    \label{fig:energy_stab}
\end{figure}

\subsection{Effect of the Long-range Interaction Strength $\gamma$} \label{subsec:gamma-effect}

We next use the Strang-ETDRK2 scheme \eqref{eqn:ETDRK2_Strang} to explore the effect of the long-range interaction strength $\gamma$ on protein phase separation on a circular membrane. We fix $\omega=0.5$, $\kappa_-=4$, $\kappa_+=0.2$, and $\lambda_u=20$, and vary $\gamma=14000$, $17000$, $20000$, and $25000$. In all four simulations, the initial phase field vesicle is given by \eqref{eqn:ini_phi}. For each $\gamma$, the simulations start from a randomly generated initial protein variable $u$ on the diffuse membrane. Each simulation is then evolved until the equilibrium state is reached.

Figure~\ref{fig:gamma-effect} shows that $\gamma = 14000,17000,20000,25000$ give eight, nine, ten, and eleven protein-rich subdomains, respectively. For $\gamma=14000$, the system evolves to a pattern with eight relatively large protein-rich subdomains. As $\gamma$ increases, the stronger long-range interaction leads to more protein-rich subdomains with nearly uniform spacing along the membrane. This behavior is consistent with the competition between short-range and long-range interactions in the OK energy, in which the short-range interaction favors coarsening, whereas the long-range interaction favors smaller domains.

\begin{figure}[t]
    \begin{center}
    \includegraphics[width=0.2\textwidth]{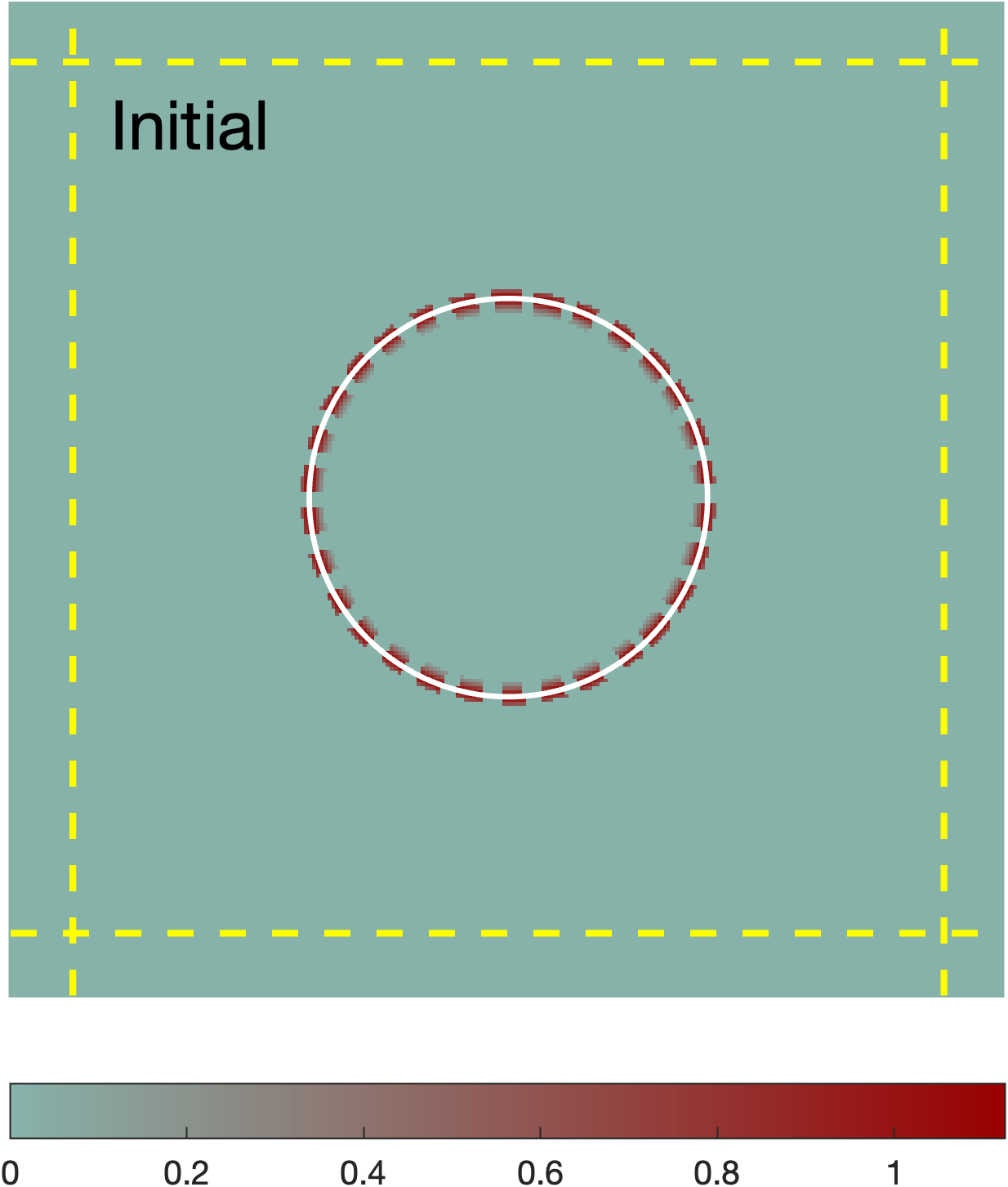} 
    \includegraphics[width=0.2\textwidth]{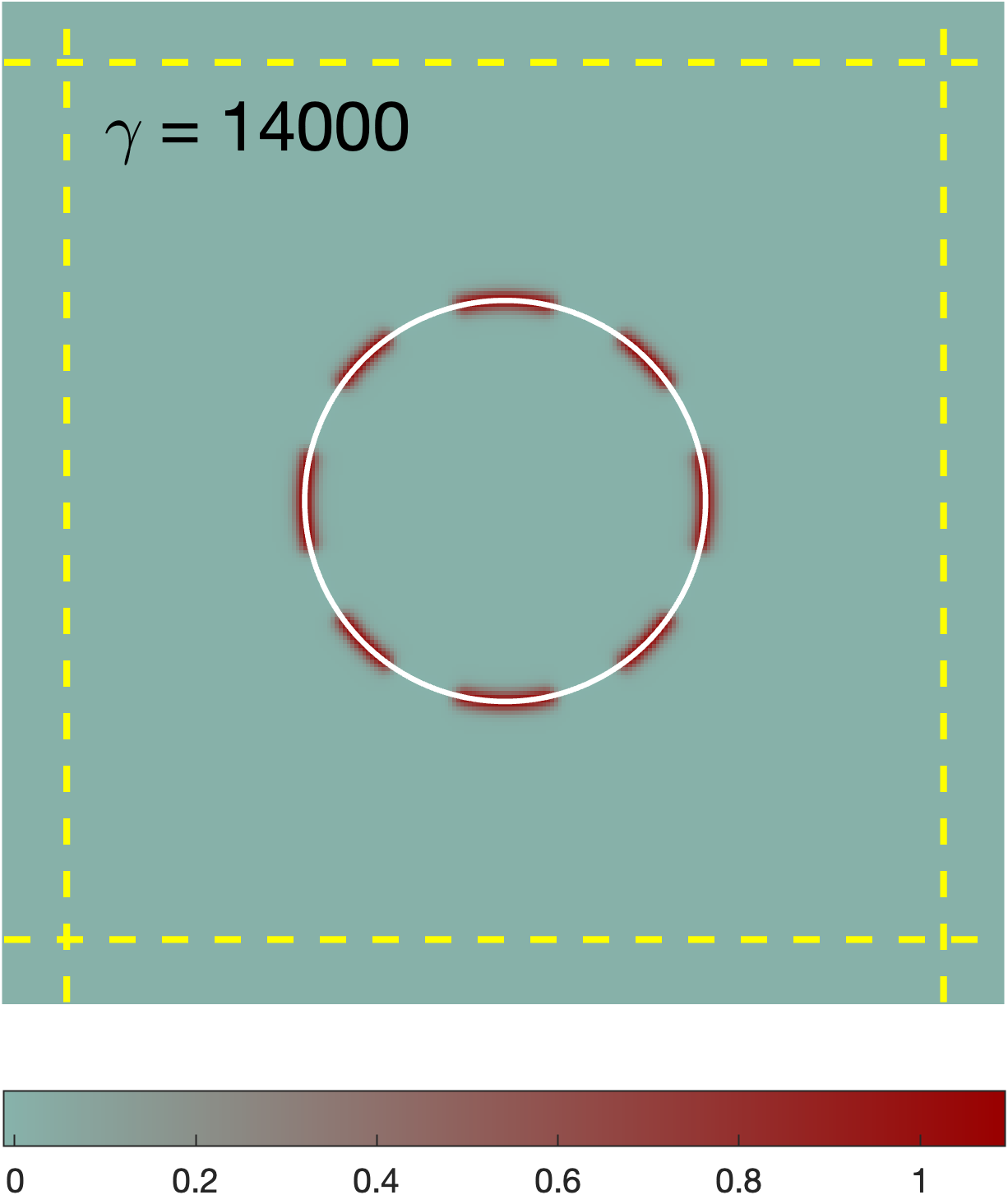}\qquad
    \includegraphics[width=0.2\textwidth]{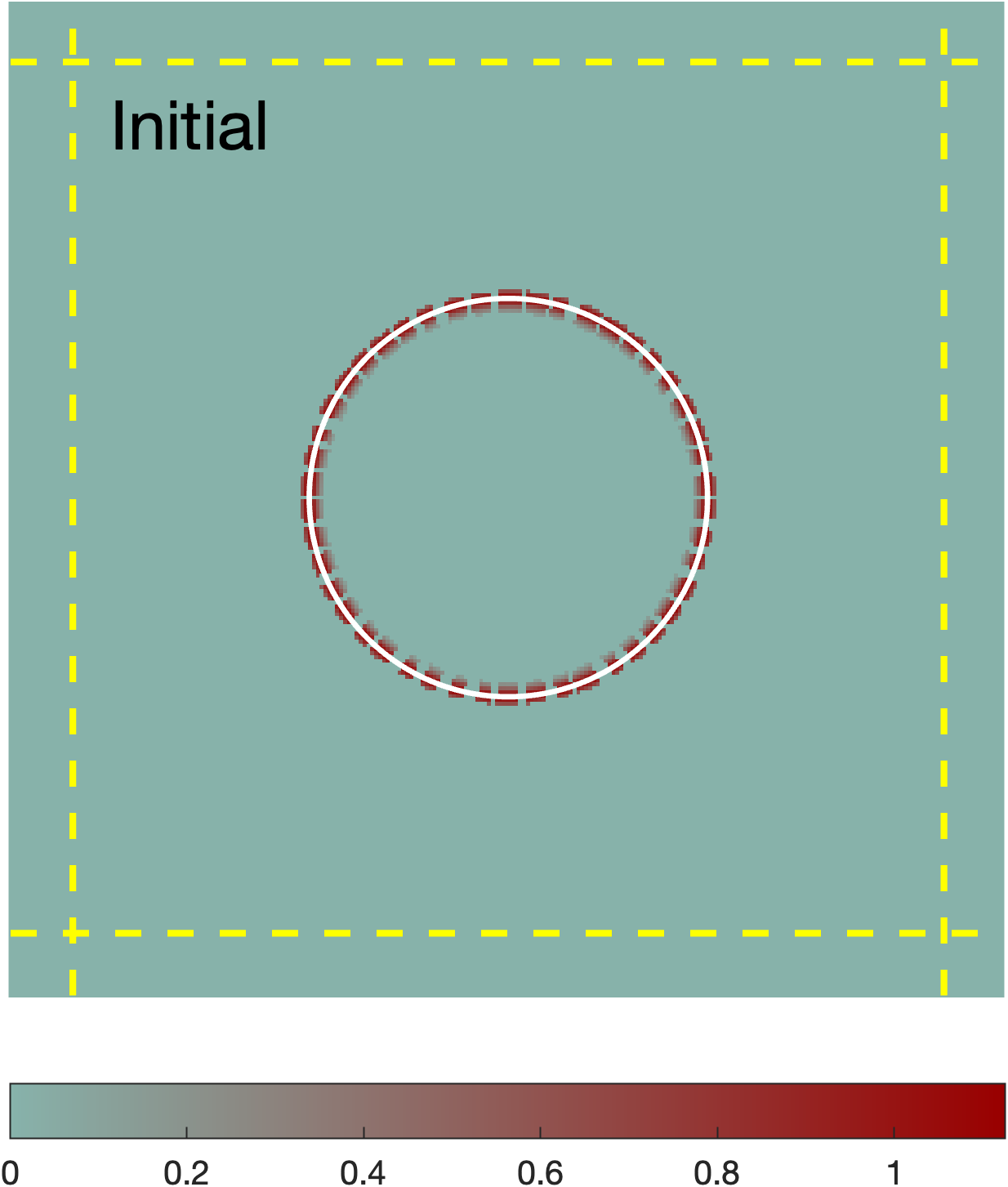} 
    \includegraphics[width=0.2\textwidth]{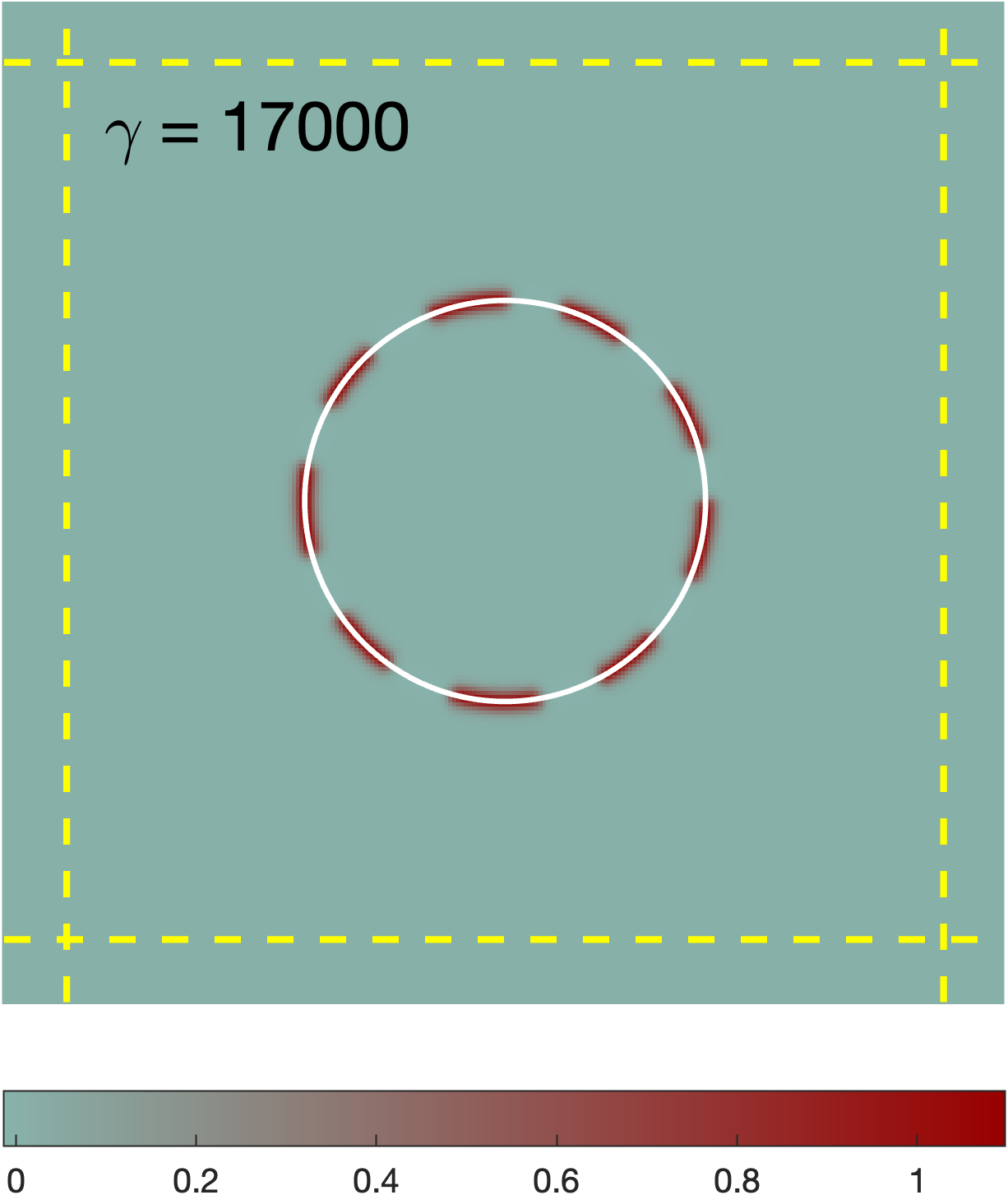}\\
    \vspace{3mm}
    \includegraphics[width=0.2\textwidth]{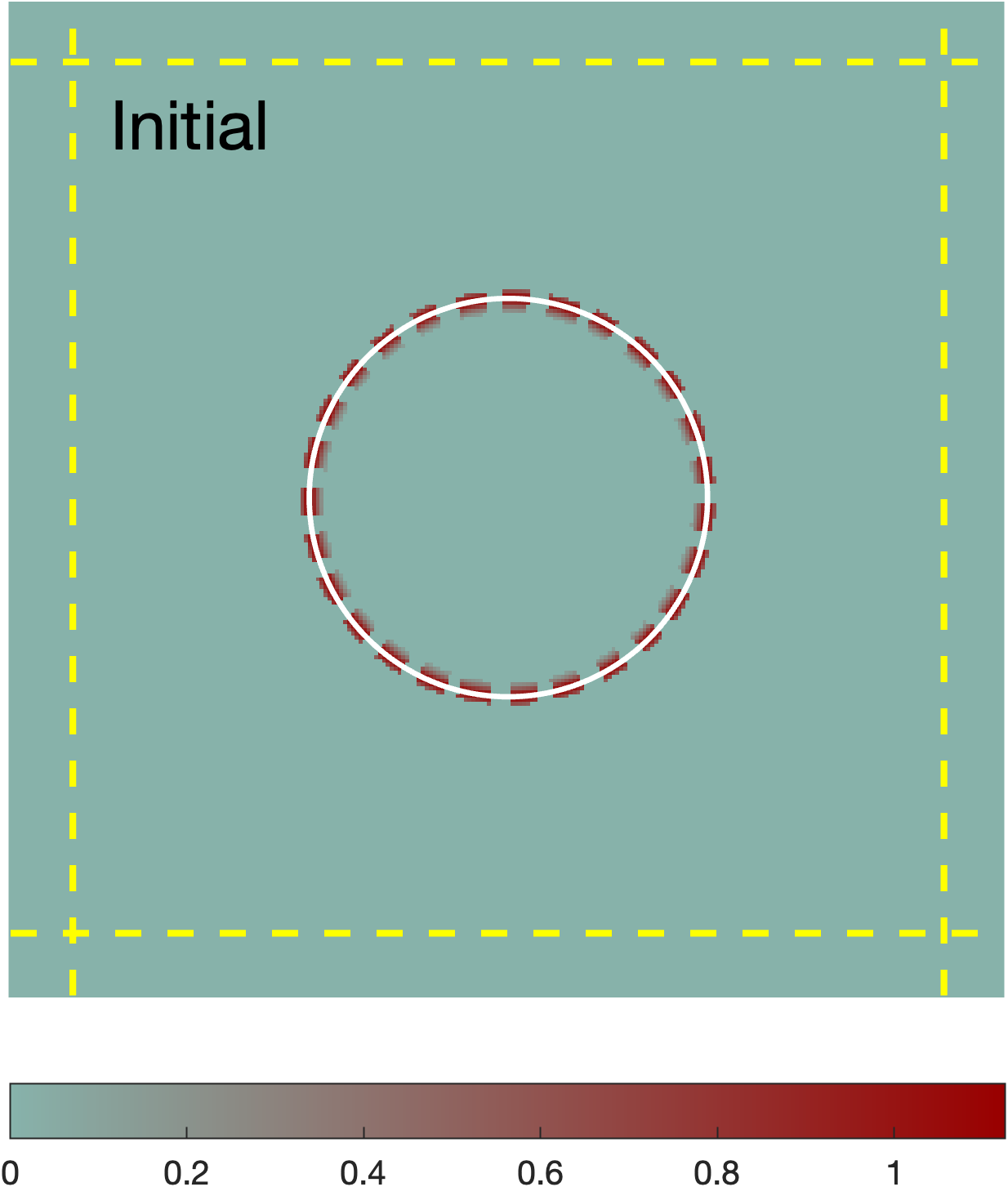} 
    \includegraphics[width=0.2\textwidth]{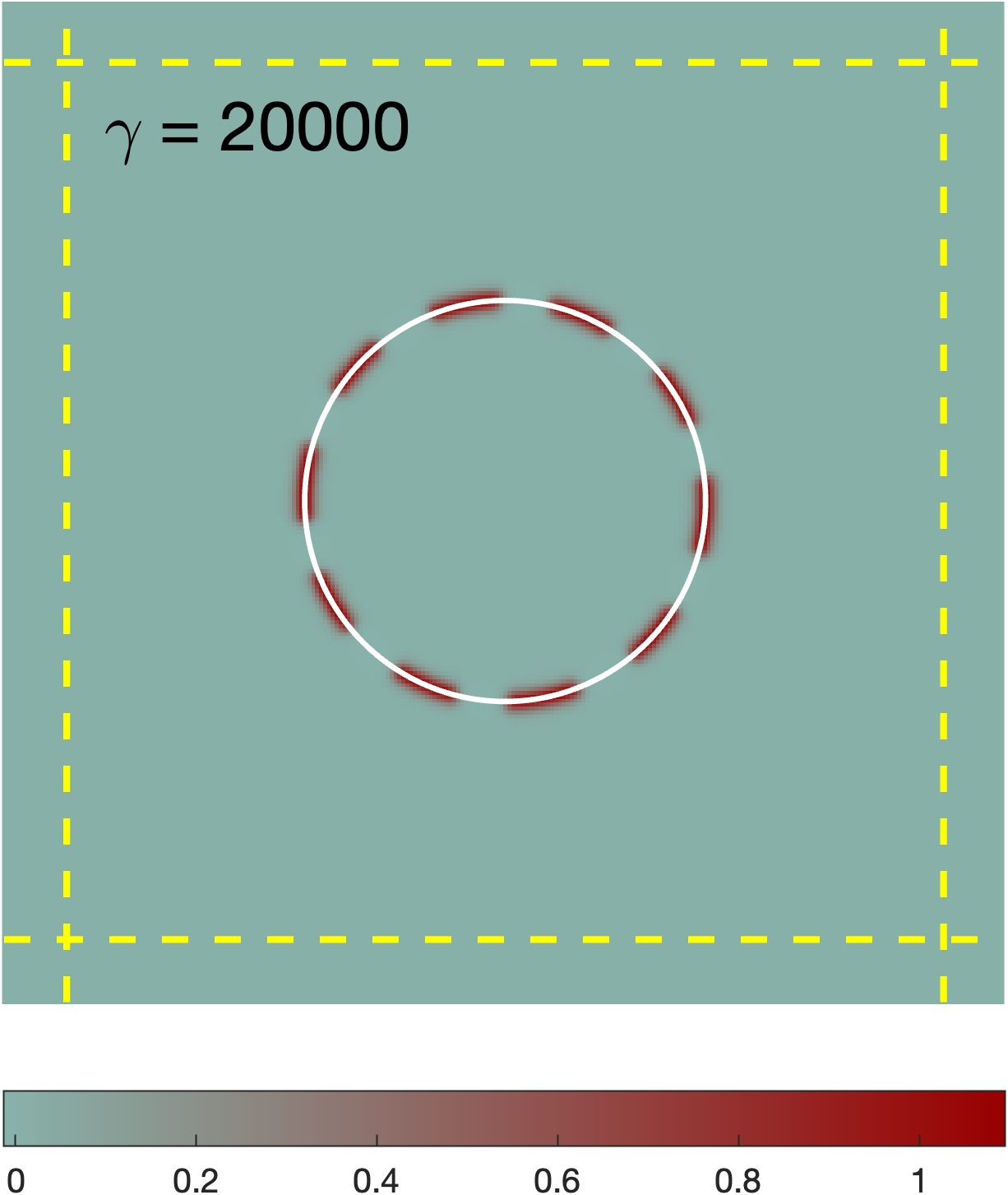}\qquad
    \includegraphics[width=0.2\textwidth]{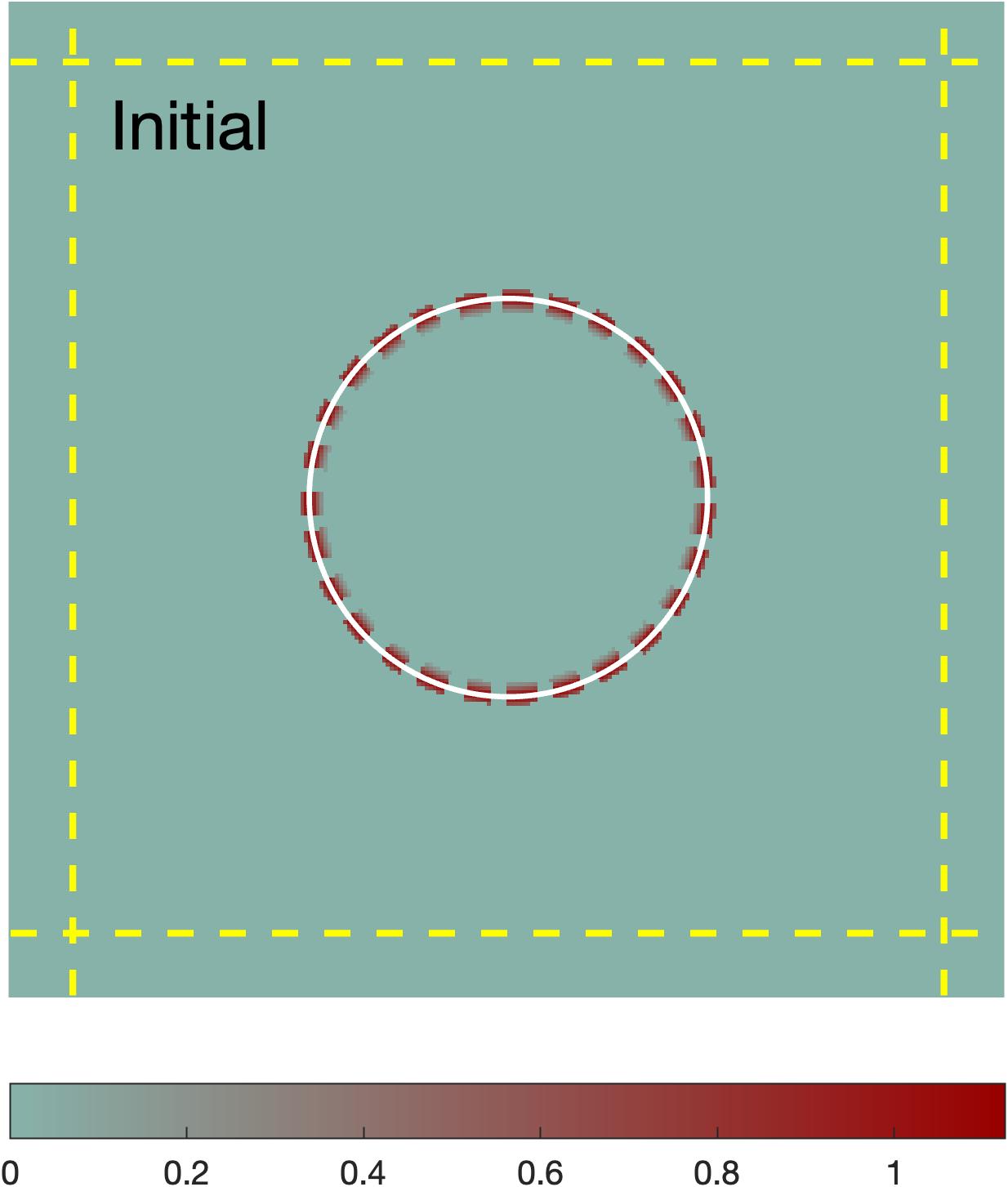}
    \includegraphics[width=0.2\textwidth]{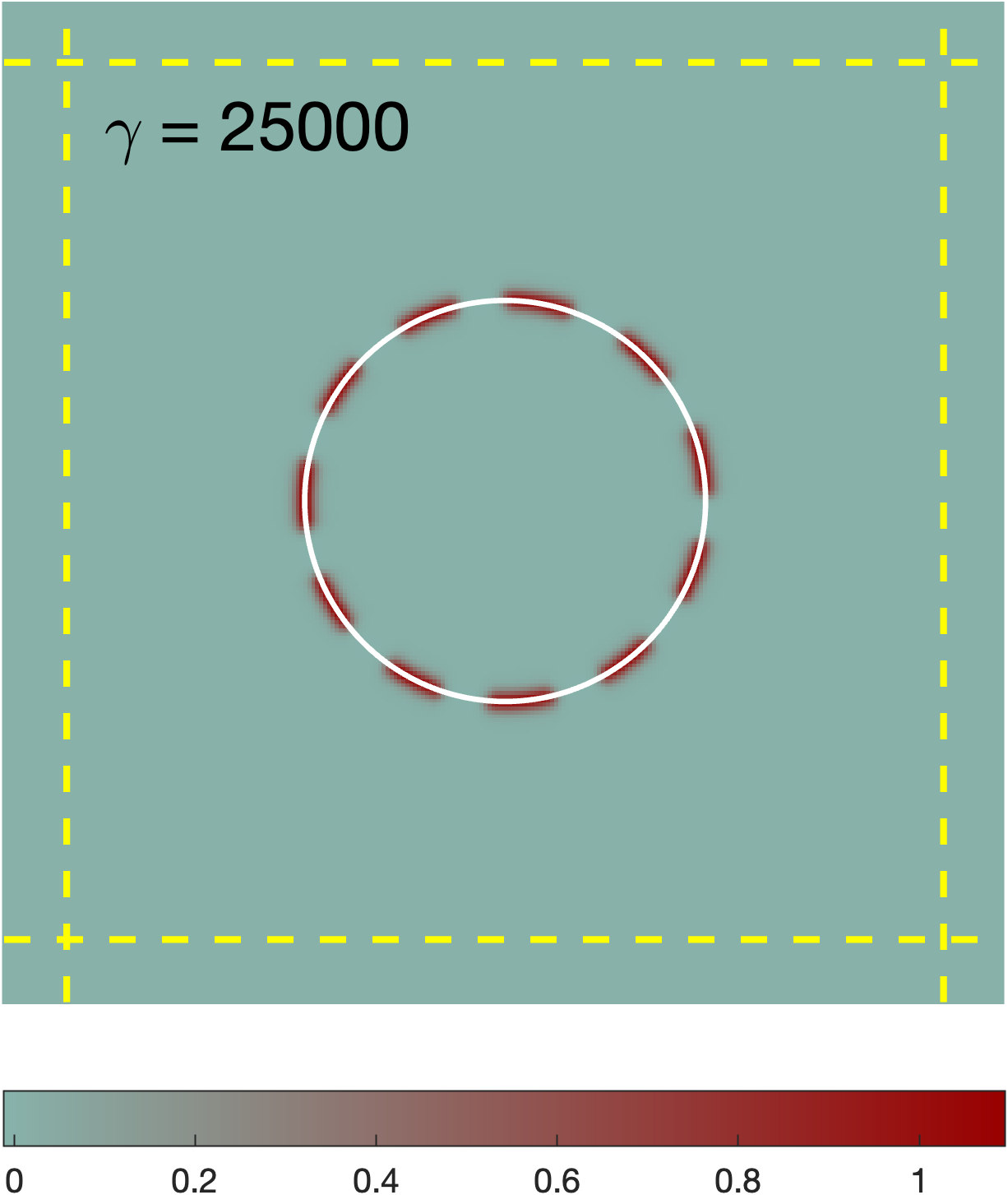}
    \end{center}
    \vspace{-0mm}
    \caption{Effect of the long-range interaction strength $\gamma$ on protein phase separation. Within each pair, the left panel shows a random initial protein distribution on the membrane and the right panel shows the corresponding equilibrium state. The top-left, top-right, bottom-left, and bottom-right pairs correspond to $\gamma=14000$, $17000$, $20000$, and $25000$, producing eight, nine, ten, and eleven protein-rich subdomains, respectively. In all four experiments, $\omega=0.5$, $(\kappa_-,\kappa_+)=(4,0.2)$, and $\lambda_u=20$.}   
    \label{fig:gamma-effect}
\end{figure}

\subsection{Effects of $\lambda_u$ and the Bending Rigidities $\kappa_+$ and $\kappa_-$}\label{subsec:lambdakappa-effect}

We now use the Strang-ETDRK2 scheme \eqref{eqn:ETDRK2_Strang} to explore the effects of the bending rigidities $(\kappa_-,\kappa_+)$ and the membrane-associated OK energy scale $\lambda_u$ on membrane deformation. Recall that the parameters $\kappa_-$ and $\kappa_+$ denote the bending rigidities of the protein-poor and protein-rich subdomains, respectively, and the parameter $\lambda_u$ is the trade-off between the membrane energy and the membrane-associated OK energy.

The top row in Figure~\ref{fig:lambdakappa-effect} shows the effect of the variable bending rigidity. The initial phase field vesicle is given by \eqref{eqn:ini_phi}, and the initial protein distribution contains three prescribed protein-rich subdomains on the membrane. We fix $\omega=0.5$, $\lambda_u=10$, and $\gamma=5000$, and vary $(\kappa_-,\kappa_+)=(1,0.2)$, $(10,0.2)$, and $(0.2,5)$. In the first two cases where $\kappa_+<\kappa_-$, the protein-rich subdomains are softer than the protein-poor subdomains and prefer to stay in high-curvature regions. The stiffer protein-poor subdomains tend to locate in low-curvature regions. As $\kappa_-$ increases from $1$ to $10$, the membrane develops a more pronounced triangular shape. For $(\kappa_-,\kappa_+)=(0.2,5)$, the protein-rich subdomains are stiffer and stay in low-curvature regions, whereas the softer protein-poor subdomains locate in high-curvature regions, leading to a different membrane morphology.

For the simulations shown in the bottom row of Figure~\ref{fig:lambdakappa-effect}, the initial phase field vesicle is again given by \eqref{eqn:ini_phi}, with one protein-rich subdomain on the upper part of the membrane. We fix $\omega=0.4$, $(\kappa_-,\kappa_+)=(3,0.5)$ and $\gamma=100$, and vary $\lambda_u=10$, $100$, and $1000$. In all three equilibrium states, the protein-rich subdomain forms the upper curved regions, while the protein-poor subdomain stays in the lower part of the membrane. Varying $\lambda_u$ produces small changes in the vesicle size and the curvatures of the protein-rich and protein-poor subdomains. As explained in Remark~\ref{rem:line-tension-OK}, the short-range term $\lambda_u E_{\mathrm{loc}}$ in $E_{\rm OK}[\phi,u]$ \eqref{eq:MOK-energy} approximates the line-tension energy in \cite{wang2008}. The effect associated with line tension is less apparent in these two-dimensional examples, where the two phases only interact at two boundary points. In three dimensions, the boundary between the protein-rich and protein-poor subdomains is a curve on the membrane surface, and the short-range term $\lambda_u E_{\mathrm{loc}}$ favors a reduction in its length, which can lead to more pronounced membrane deformation through competition with elastic bending, surface tension, and the volume constraint \cite{wang2008}. The three-dimensional examples in Figure~\ref{fig:3d-examples} in the next subsection further show membrane deformation generated by the coupled system.

\begin{figure}[t]
    \begin{center}
    \includegraphics[width=0.22\textwidth]{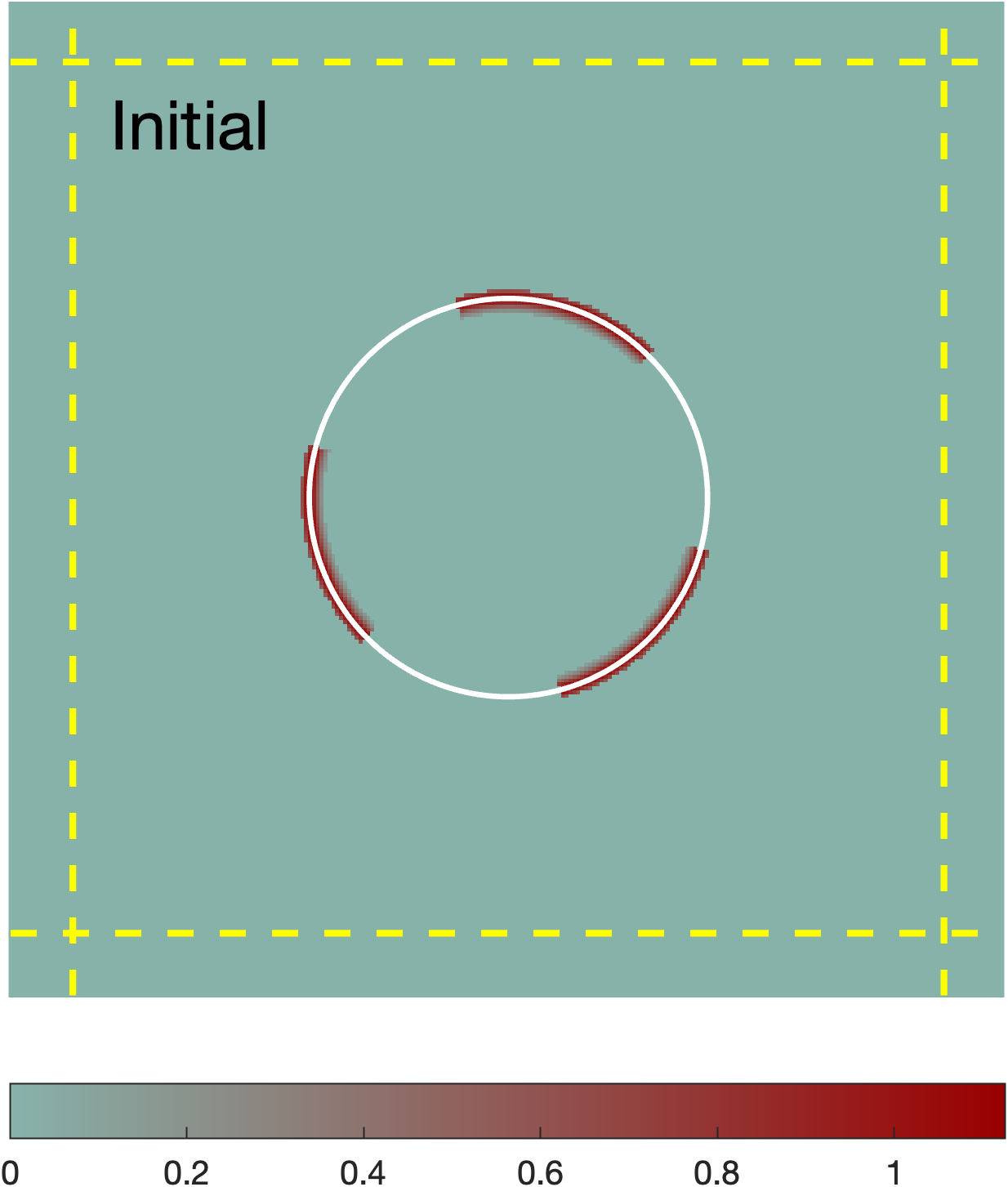}
    \includegraphics[width=0.22\textwidth]{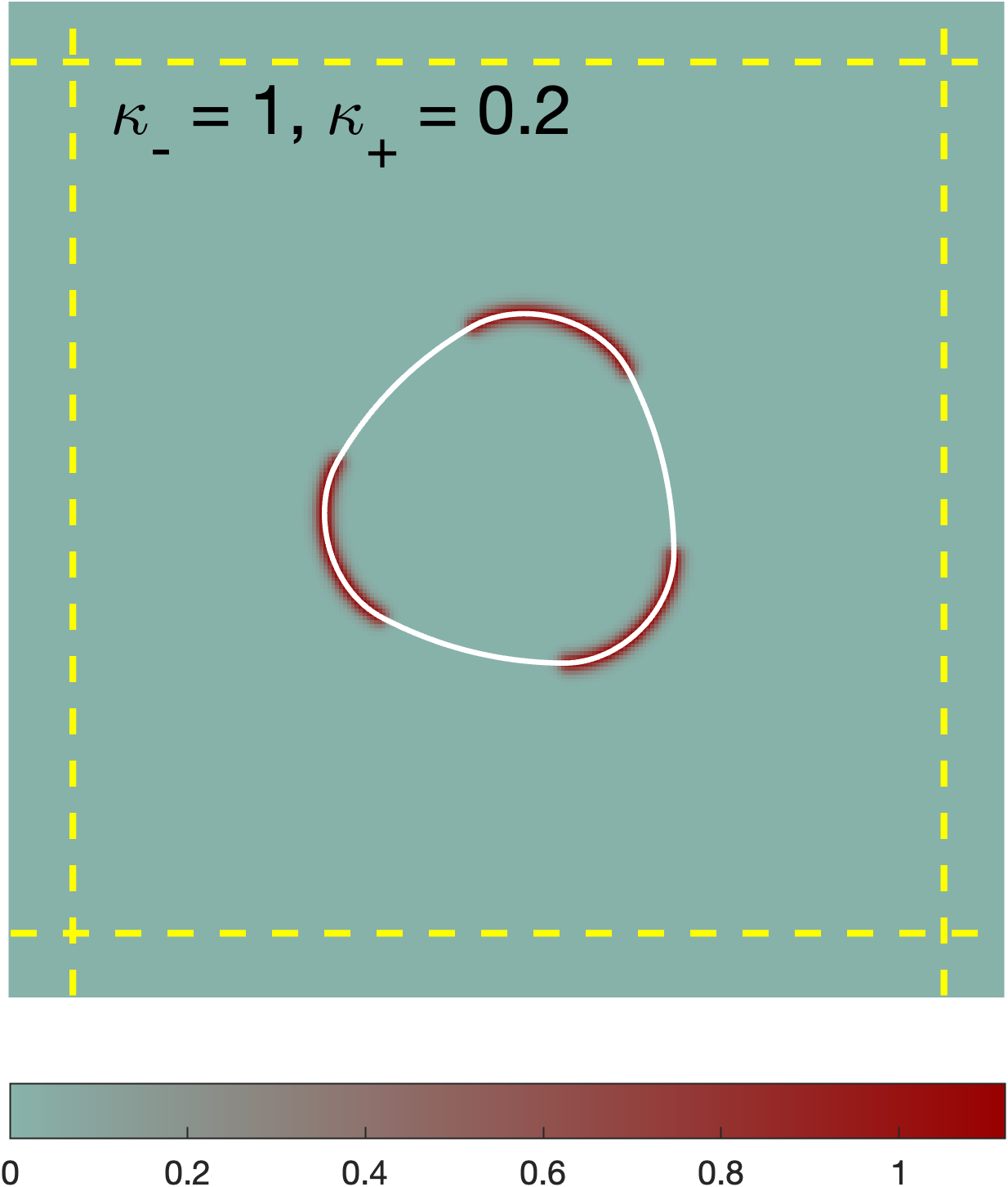}
    \includegraphics[width=0.22\textwidth]{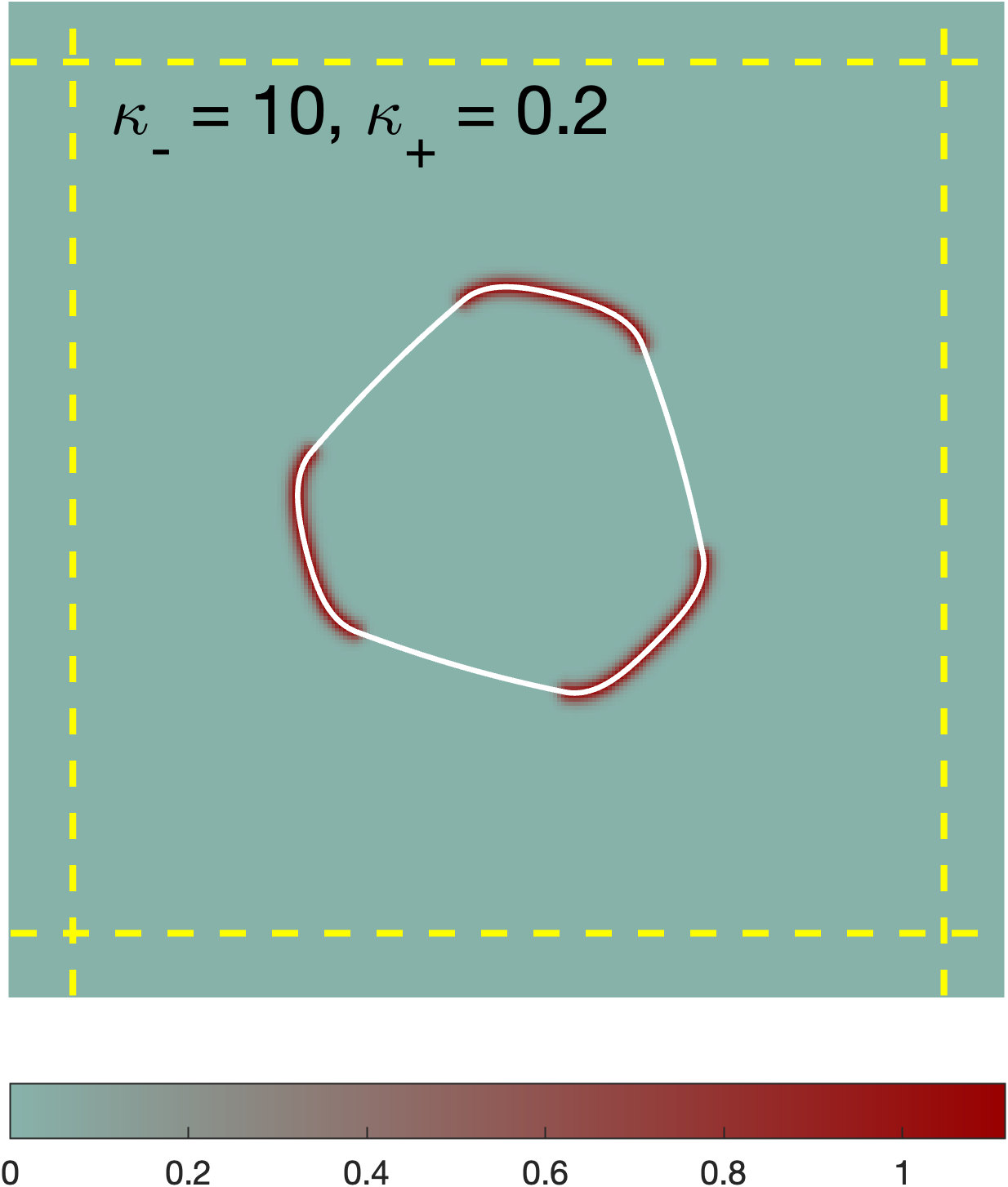}
    \includegraphics[width=0.22\textwidth]{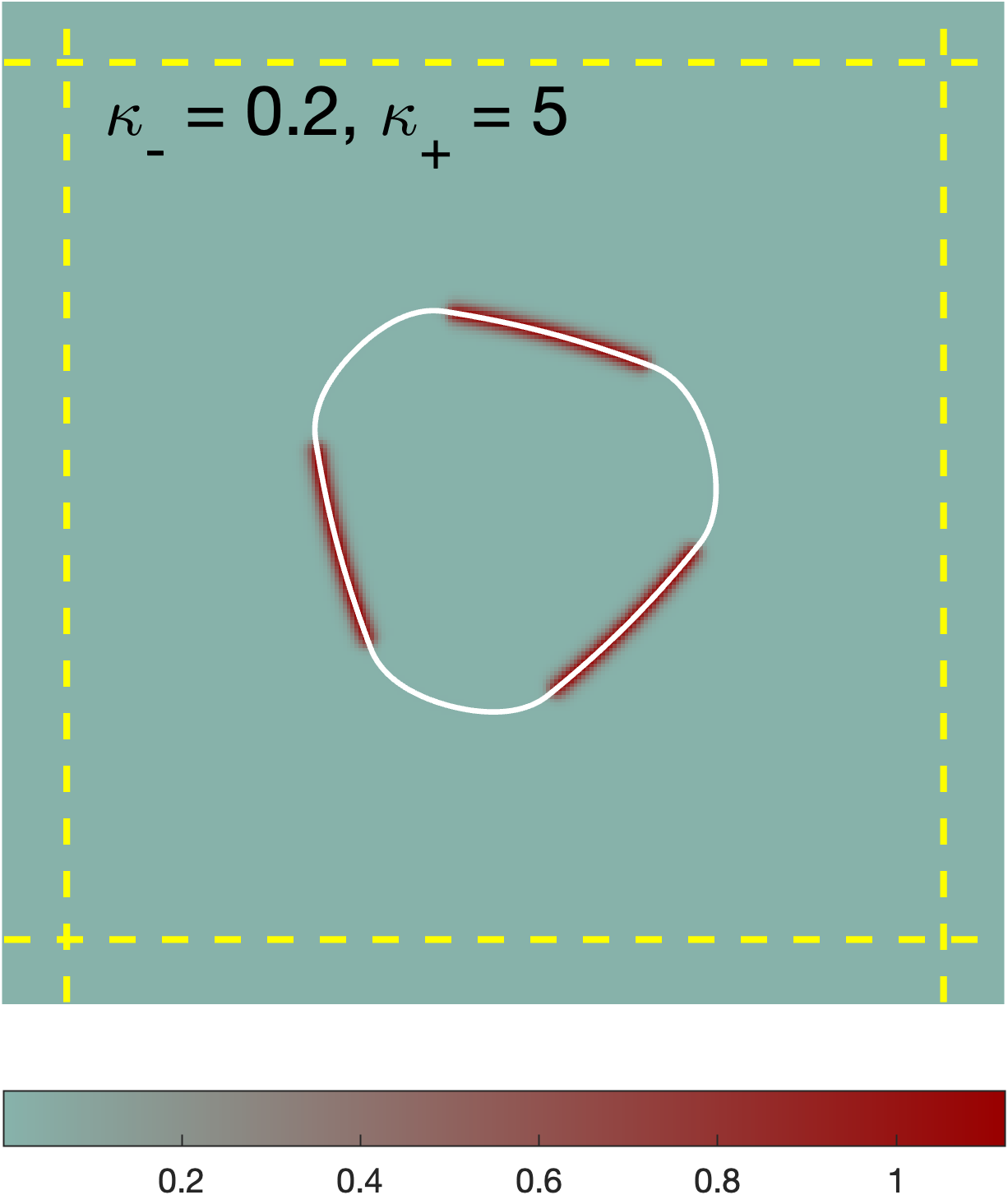}\\
    \vspace{3mm}
    \includegraphics[width=0.22\textwidth]{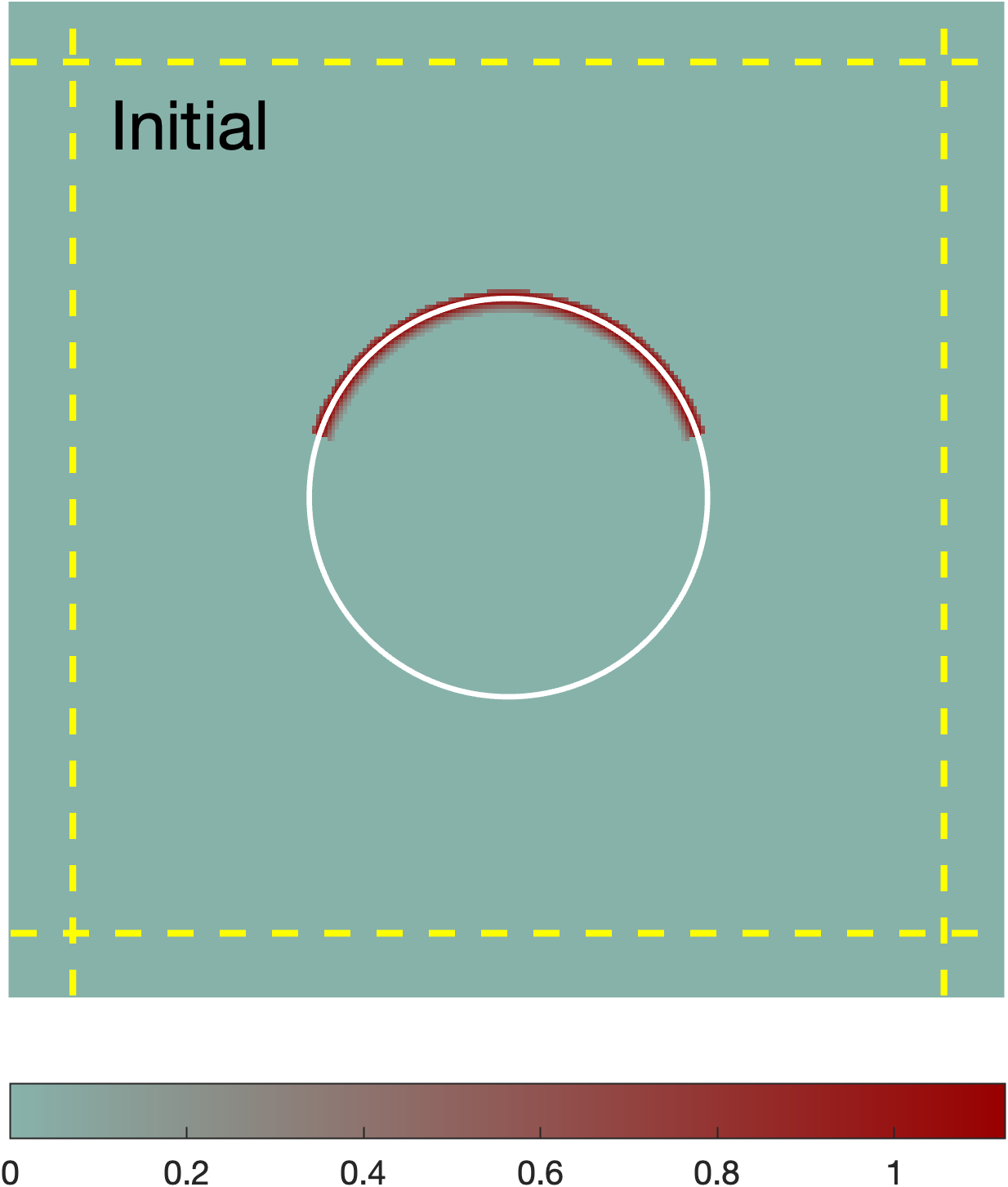}
    \includegraphics[width=0.22\textwidth]{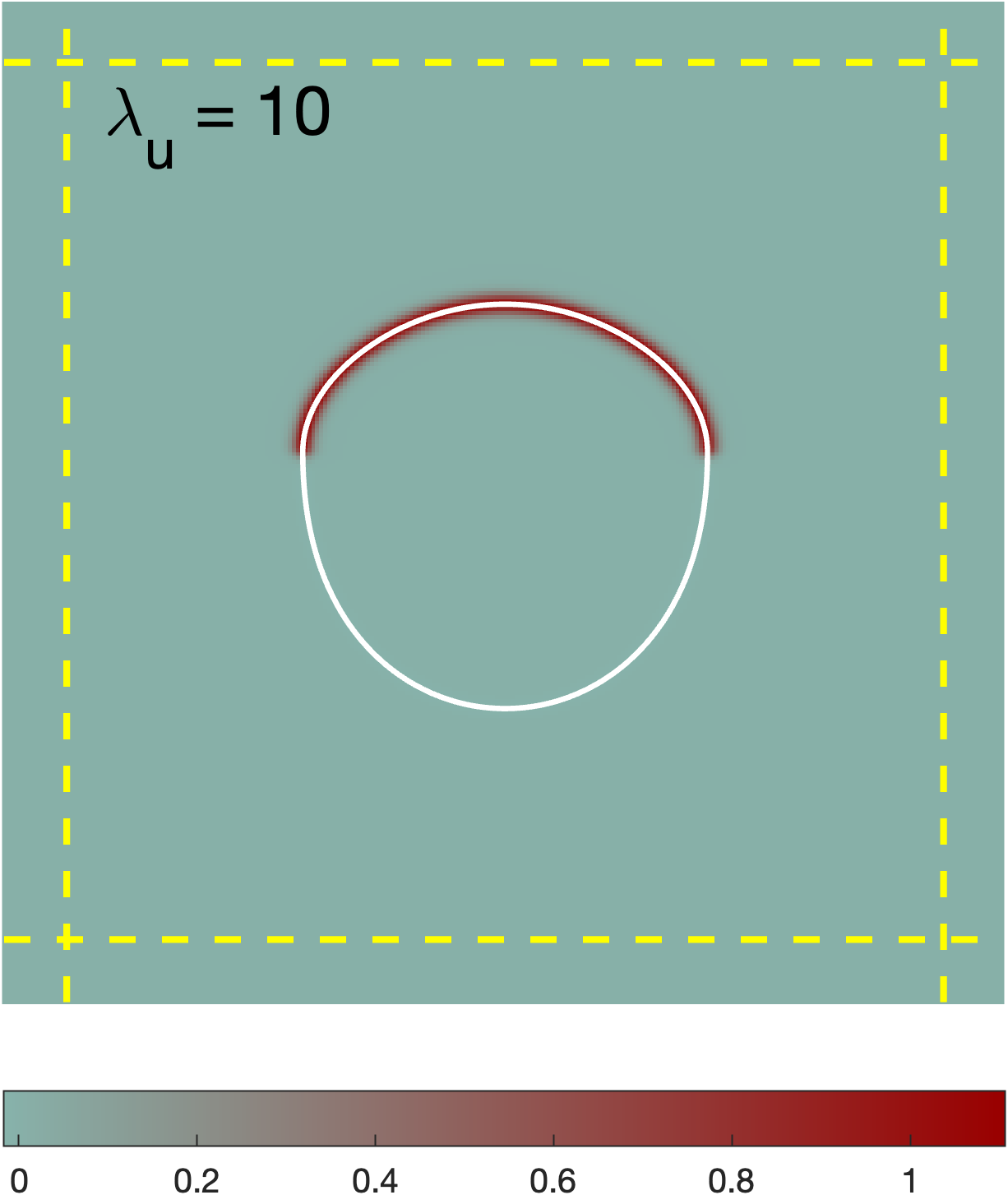}
    \includegraphics[width=0.22\textwidth]{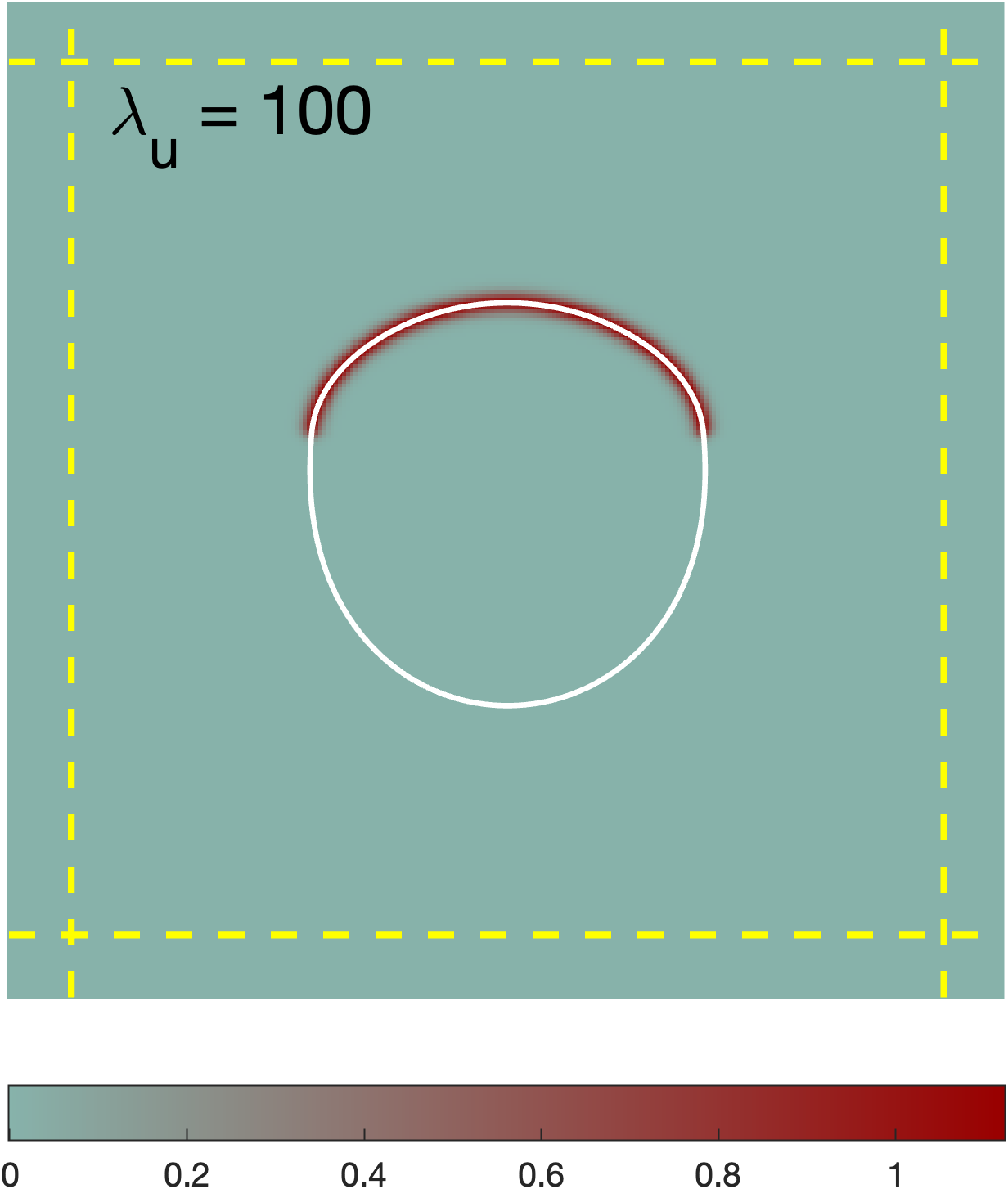}
    \includegraphics[width=0.22\textwidth]{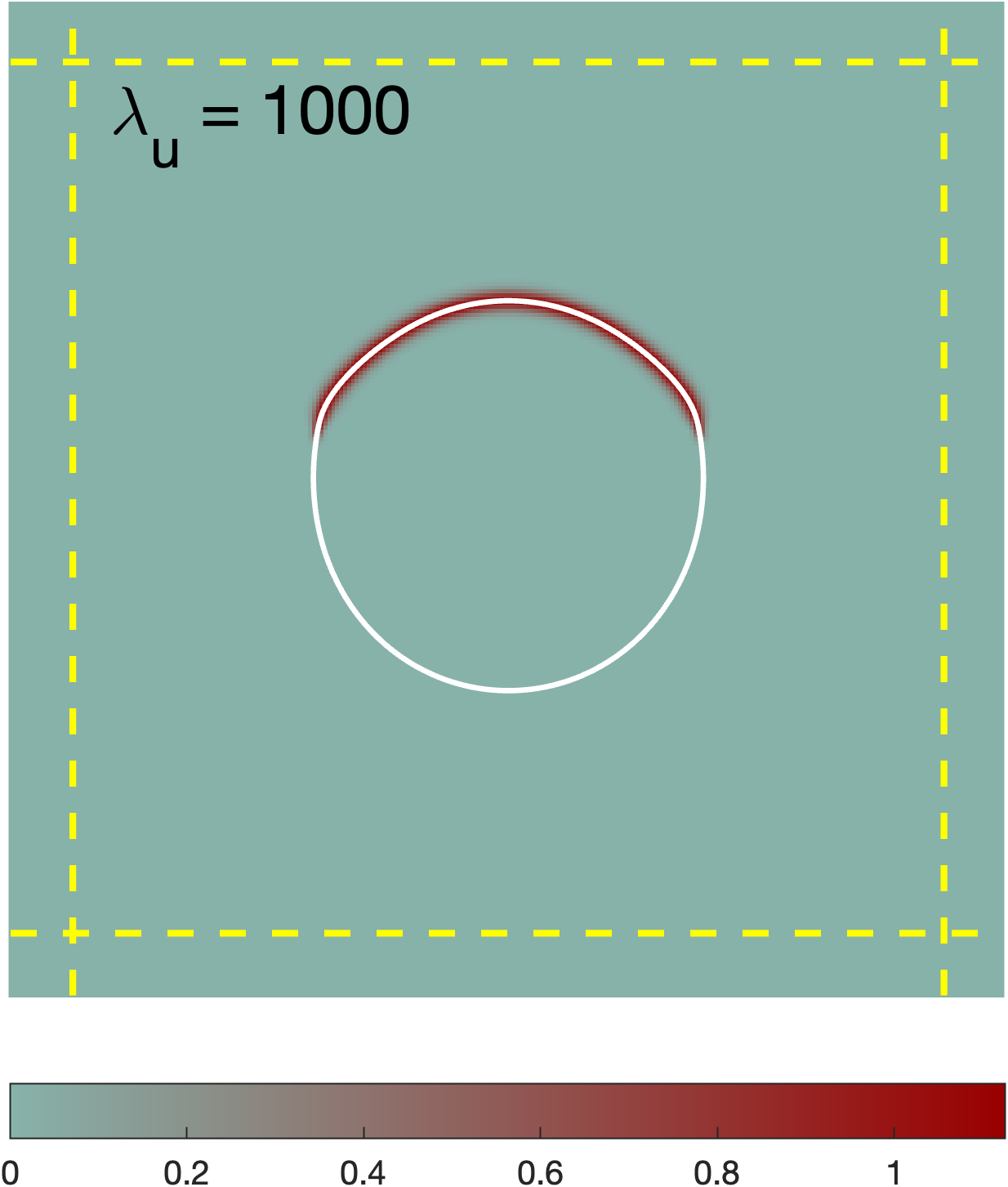} 
    \end{center}
    \vspace{-0mm}
    \caption{Effects of the protein-dependent bending rigidities $\kappa_{+}$, $\kappa_{-}$ and the scale $\lambda_u$ on membrane deformation. Top row from left to right: the prescribed three-bump initial state and the equilibrium states for $(\kappa_-,\kappa_+)=(1,0.2)$, $(10,0.2)$, and $(0.2,5)$, with $\omega=0.5$, $\lambda_u=10$ and $\gamma=5000$. Bottom row from left to right: the prescribed phase-separated initial state and the equilibrium states for $\lambda_u=10$, $100$, and $1000$, with $\omega=0.4$, $(\kappa_-,\kappa_+)=(3,0.5)$ and $\gamma=100$. 
    }   
    \label{fig:lambdakappa-effect}
\end{figure}

\subsection{Representative Three-dimensional Patterns}
\label{subsec:three-dimensional-configurations}

Finally, we apply the proposed model in three dimension to examine the coupling between protein segregation and membrane morphology. The initial phase field vesicle is given by
\begin{align}
        \bPhi^0 = 0.5+0.5\tanh{\left(\frac{3(r_0-r)}{\ve_{\phi}}\right)}, \quad r_0 = 0.4, \nonumber
\end{align}
where $r$ is the distance from each grid point to the center of the 3D domain. The initial protein distributions are chosen close to phase-separated states. We then evolve the fully coupled system until an equilibrium state is reached in each case. The parameters are specified separately for each experiment below. The membrane is represented by the isosurface $\phi=1/2$, colored by the protein density $u$.

The top row of Figure~\ref{fig:3d-examples} presents three completely phase-separated patterns with prescribed protein fractions $\omega=0.1$, $0.3$, and $0.85$, from left to right. The corresponding parameter sets $(\lambda_u,\gamma,\kappa_-,\kappa_+)$ are $(30,100,2,1)$, $(10,100,5,0.5)$, and $(30,100,3,1)$, respectively. Although each case contains one protein-rich subdomain, the different protein fractions and bending rigidities lead to different membrane morphologies. The bottom row of Figure~\ref{fig:3d-examples} shows three examples with different numbers and locations of protein-rich subdomains. In the bottom-left panel, an equatorial protein-rich band produces a dumbbell-shape membrane with $\lambda_u=100$, $\omega=0.2$, $\gamma=300$, and $(\kappa_-,\kappa_+)=(0.2,6)$. In the bottom-middle panel, two protein-rich subdomains located near the two poles of the vesicle produce a pear-shape membrane with $\lambda_u=30$, $\omega=0.4$, $\gamma=500$, and $(\kappa_-,\kappa_+)=(2,1)$. In the bottom-right panel, twelve protein-rich subdomains generate a multi-bump membrane with $\lambda_u=40$, $\omega=0.5$, $\gamma=7000$, and $(\kappa_-,\kappa_+)=(5,0.5)$. These examples show that the proposed model can generate completely phase-separated, dumbbell-shape, pear-shape, and multi-bump membrane morphologies through the coupling between protein segregation and membrane deformation.

\begin{figure}[t]
  \centering

  \ThreeDPanel{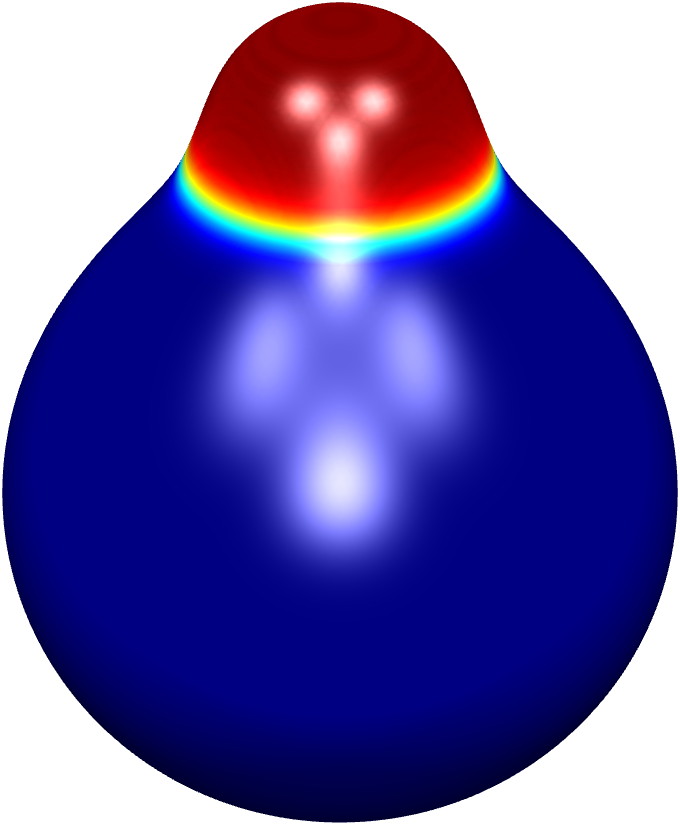}\hfill
  \ThreeDPanel{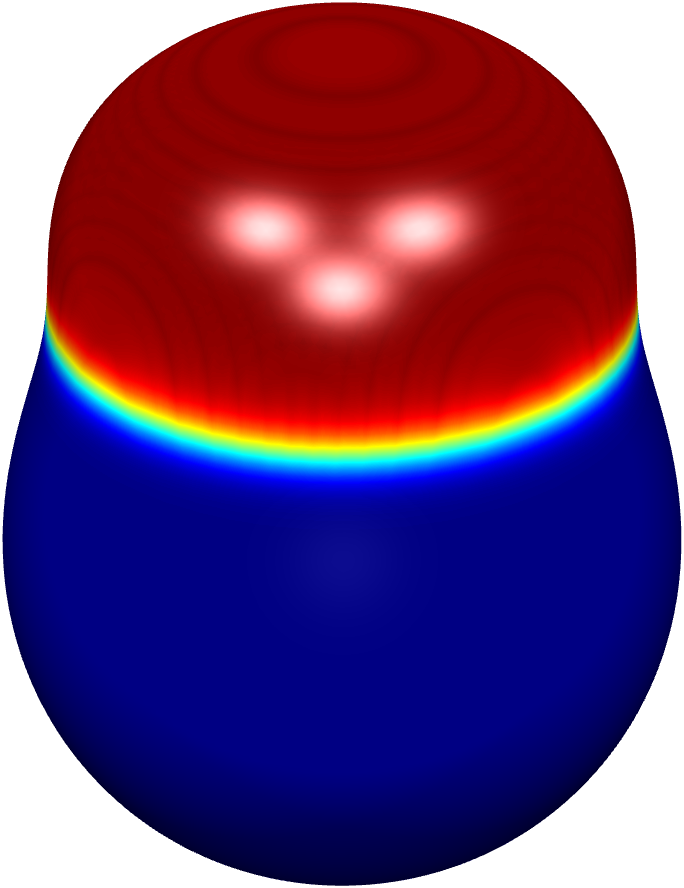}\hfill
  \ThreeDPanel{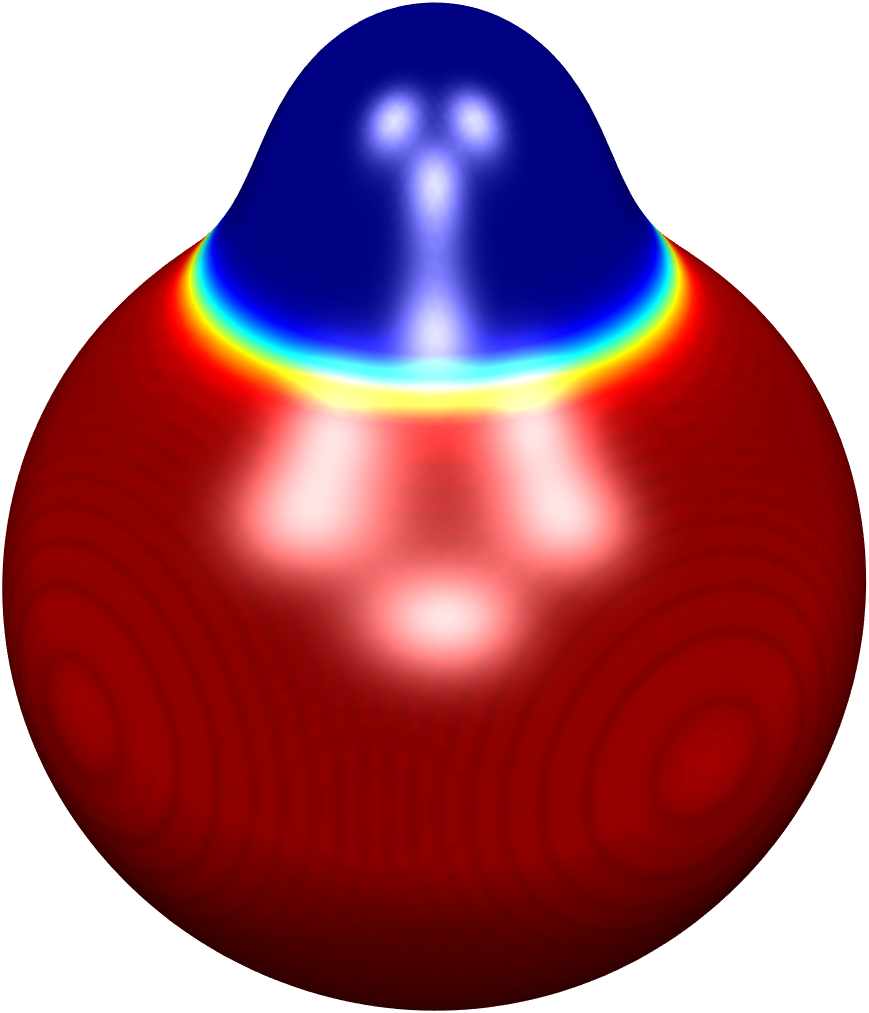}

  \vspace{5mm}

  \ThreeDPanel{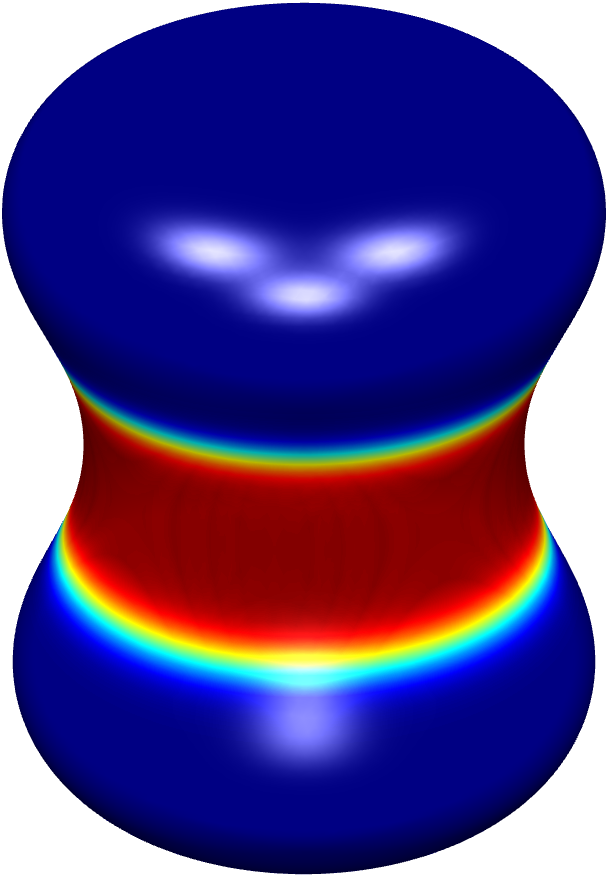}\hfill
  \ThreeDPanel{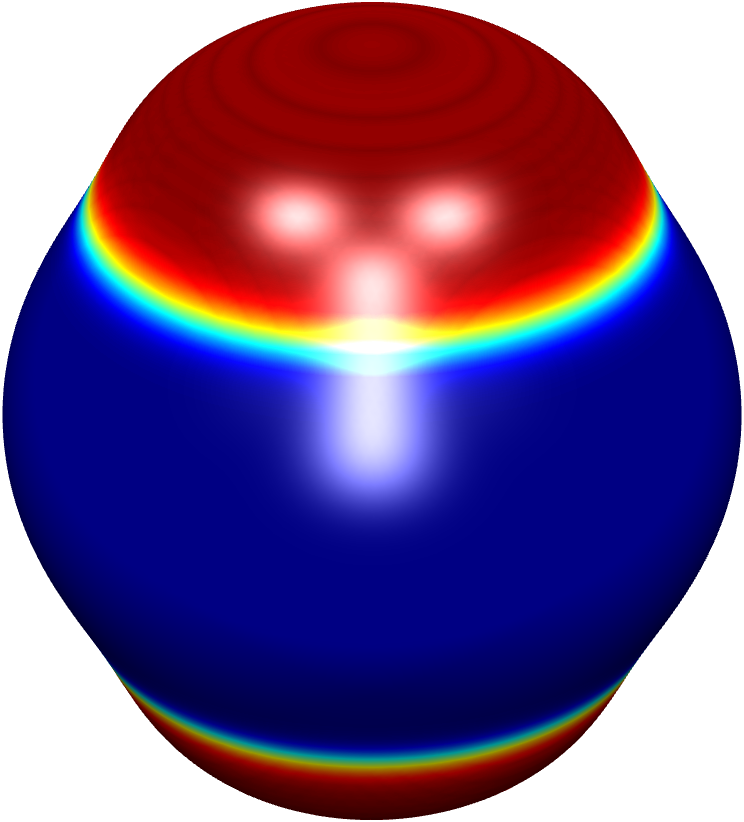}\hfill
  \ThreeDPanel{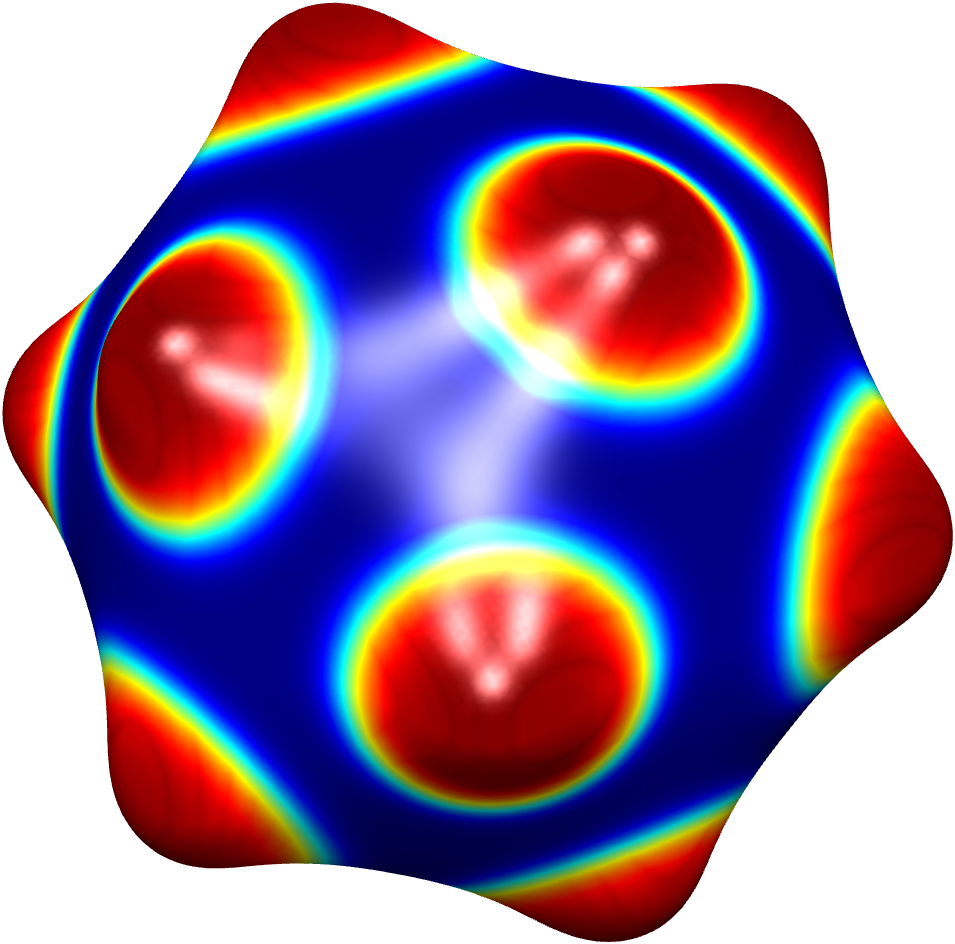}

  \caption{ Representative three-dimensional equilibrium patterns. Top row from left to right: completely phase-separated patterns with prescribed protein fractions $\omega=0.1$, $0.3$, and $0.85$, respectively. Bottom row from left to right: a dumbbell-shape pattern, a pear-shape pattern, and a multi-bump pattern. The membrane is the isosurface $\phi=1/2$; red and blue indicate protein-rich and protein-poor subdomains, respectively.}
  \label{fig:3d-examples}
\end{figure}

\section{Conclusion}\label{sec:conclusion}

In this work, we developed a thermodynamically consistent diffuse-interface model for multicomponent membranes that couples membrane deformation and membrane-associated protein segregation through a coupled free energy. This energy combines protein-dependent diffuse bending energy, surface tension, enclosed-volume penalty, and membrane-associated OK energy. In contrast to the mechanochemical model developed in our previous work \cite{luo2026ok,luoETDmembrane}, the coupling in the current model is derived from the coupled free energy in which protein composition determines the bending rigidity, while membrane geometry influences the domain for the protein segregation. By applying the Onsager variational principle, we obtained a coupled $L^2$ gradient flow system and its continuous energy dissipation law. We then introduced a stabilized linear-nonlinear splitting, constructed alternating ETD1 and Strang-ETDRK2 schemes for the coupled system, and proved the discrete energy dissipation law for both schemes. The numerical experiments validated the discrete energy dissipation and showed how the key model parameters affect the membrane morphology. The three-dimensional simulations produced completely phase-separated, dumbbell-shape, pear-shape, and multi-bump membranes morphologies through the coupled system.

The work can be extended in several directions. Firstly, the current analysis of discrete energy dissipation law depends on the regularity assumptions of the numerical solutions. In the future, we will investigate whether these regularity assumptions can be obtained from sufficiently smooth initial data. Moreover, we will conduct rigorous convergence analysis for the proposed alternating ETD schemes and further develop higher-order structure-preserving numerical schemes. Secondly, we will study the well-posedness and sharp-interface limit of the coupled system. Finally, the proposed model can be extended to incorporate the hydrodynamics or adhesion effects when the membranes interact with the surrounding environments (substrate, extracellular matrix). Such extensions will provide a more flexible framework to explore the morphologies of the multicomponent vesicle membrane.

\section*{Acknowledgments} 

{The first and second authors were partially supported by NSFC/RGC Joint Research Scheme grant N\_PolyU5145/24 and Hong Kong Research Grants Council GRF grant 15305624.}

\section*{Data Availability} 

The datasets generated in this study are available from the corresponding author upon request.

\section*{Conflict of Interest} 

The authors declare that they have no conflict of interest.

\bibliographystyle{elsarticle-num}
\bibliography{citation}

\end{document}